%% removed the parts used in fibracoesxxx.tex
 %%removed the parts already in cotangente3.tex
\documentclass[10pt,a4paper]{amsart}
\usepackage{fourier} 
\usepackage{amsthm}
\usepackage{xargs}                 
\usepackage[pdftex,dvipsnames]{xcolor} 

\usepackage{amsmath}
\usepackage{braket}
\usepackage[colorlinks,citecolor=blue,backref=page]{hyperref}
\usepackage{multirow}

\hypersetup{
  pdftitle   = {},
  pdfauthor  = {},
  pdfcreator = {\LaTeX\ with package \flqq hyperref\frqq}
}
%

%\DeclareMathAlphabet{\mathpzc}{OT1}{pzc}{m}{it}
%\usepackage{tikz-cd}
%\usepackage{tkz-graph}
%\usetikzlibrary{shapes.geometric,arrows, shapes}
%
\usepackage{longtable}
\usepackage{pdflscape}

\usepackage{amssymb}
\usepackage{pifont}

%\usepackage[hyperref,backend=biber,backref,backrefstyle=none]{biblatex}

%\usepackage{lineno}
%\linenumbers

\newtheorem{theorem}{Theorem}[section]
\newtheorem*{theorem*}{Theorem}
\newtheorem{theorem-non}{Theorem}
\newtheorem{proposition}[theorem]{Proposition}
\newtheorem{lema}[theorem]{Lemma}
\newtheorem*{lema*}{Lemma}

\newtheorem*{conjecture*}{Conjecture}

\theoremstyle{definition}
\newtheorem{definition}[theorem]{Definition}

\theoremstyle{remark}
\newtheorem{obs}[theorem]{Remark}

\newtheorem{question}[theorem]{Question}

\DeclareMathOperator{\ad}{ad}

\DeclareMathOperator{\tr}{tr}

\DeclareMathOperator{\Ad}{Ad}
\DeclareMathOperator{\id}{id}

\DeclareMathOperator{\Span}{span}
\DeclareMathOperator{\Lie}{Lie}

\DeclareMathOperator{\htt}{ht}
\DeclareMathOperator{\vol}{vol}

\newcommand{\got}{\mathfrak}

\numberwithin{equation}{section}

\setcounter{tocdepth}{4}
\setcounter{secnumdepth}{4}

\addtocontents{toc}{\setcounter{tocdepth}{2}}

% balao de notas
\usepackage[colorinlistoftodos,prependcaption,textsize=tiny]{todonotes}
\newcommandx{\duvida}[2][1=]{\todo[linecolor=red,backgroundcolor=red!25,bordercolor=red,#1]{#2}}
\newcommandx{\completar}[2][1=]{\todo[linecolor=blue,backgroundcolor=blue!25,bordercolor=blue,#1]{#2}}
\newcommandx{\info}[2][1=]{\todo[linecolor=OliveGreen,backgroundcolor=OliveGreen!25,bordercolor=OliveGreen,#1]{#2}}
\newcommandx{\melhorar}[2][1=]{\todo[linecolor=Plum,backgroundcolor=Plum!25,bordercolor=Plum,#1]{#2}}
\newcommandx{\apagar}[2][1=]{\todo[disable,#1]{#2}}

\title[Scalar curvatures of invariant almost Hermitian structures]{Scalar curvatures of invariant almost Hermitian structures on flag manifolds with two and three isotropic summands}
\author[]{Lino Grama}
\address{Imecc - Unicamp, Departamento de Matem\'{a}tica. Rua S\'{e}rgio Buarque de Holanda,
651, Cidade Universit\'{a}ria Zeferino Vaz. 13083-859 Campinas - SP, Brazil}
\email{linograma@gmail.com}

\author[]{Ailton R. Oliveira}
\address{UEMS - Universidade Estadual de Mato Grosso do Sul - MS, Cidade Universitária de Dourados, Rodovia Itahum, Km 12 s/n - Jardim Aeroporto, Dourados - MS, Brazil}
\email{ailton\_rol@yahoo.com.br}

%\author{Lino Grama and Ailton R. Oliveira}
\date{\today}

\begin{document}

\maketitle
\begin{abstract} 
In this paper we study invariant almost Hermitian geometry on generalized flag manifolds which the isotropy representation decompose into two or three irreducible components. We will provide a classification of such flag manifolds admitting {\em K\"ahler like scalar curvature metric}, that is, almost Hermitian structures $(g,J)$ satisfying $s=2s_C$ where $s$ is Riemannian scalar curvature and $s_C$ is the Chern scalar curvature.
\end{abstract} 

\section{Introduction}

%%%%%%%% Intro do paper do SIGMA
In the present paper we will study curvature properties of invariant almost Hermitian structures on homogeneous spaces. We search for {\em K\"ahler like scalar curvature {\rm (Klsc)} metrics}, that is, almost Hermitian structures $(g,J)$ satisfying $s=2s_C$ where $s$ is the Riemannian scalar curvature and $s_C$ is the Chern scalar curvature on generalized flag manifolds with two or three isotropy summands.

Recall the geometric meaning of the Riemannian scalar curvature $s$: given a Riemannian manifold $(M,g)$, the volume of the geodesic ball of radius $r$ with center at $p\in M$ has the asymptotic expansion as follows
\begin{equation}\label{ee1}
\vol(B(p,r))=\omega_{n}r^{n}\left(1-\dfrac{s(p)}{6(n+2)}r^{2}+\mathcal{O}(r^{4})\right),
\end{equation}
where $\omega_{n}$  is the volume of the unity ball in  $\mathbb{R}^{n}$, see \cite{besse}, \cite{lock}. Therefore  the Riemannian scalar curvature $s(p)$ is positive or negative at a point $p$, if the volume of a small geodesic ball at $p$ is respectively smaller or larger than the corresponding Euclidean ball of the same radius. It is well known that if $(g,J)$ is K\"ahler, then $s=2s_C$ (see \cite{moroianu}). On the other hand, if the metric is non-K\"ahler, there is no immediate geometric interpretation for the Chern scalar curvature $s_C$. However if the Hermitian structure is non-K\"ahler but satisfies $s=2s_C$, has the standard representation such as the one provided by equation (\ref{ee1}). Another motivation is that the existence of {\em Klsc metric} allow us to ask  about a Kazdan--Warner--type theorem (see \cite{Kaz-War}) for admissible functions of scalar curvature, in the case of Chern scalar curvature, such as in \cite{Fusi}. For results on geometric flows related to complex manifolds (e.g. Chern-Ricci flow) we suggest \cite{TZ}.

Dabkowski and Lock in \cite{lock} call the Hermitian metrics satisfying 
\begin{equation}\label{ee1}
s=2s_C
\end{equation}
by {\it K\"ahler like scalar curvature (Klsc) metric}, and the authors exhibit examples of non-compact Hermitian manifolds satisfying the equation \ref{ee1}.

According to \cite{integra} and \cite{apo} nearly-K\"ahler manifolds and almost-K\"ahler manifolds satisfying $s=2s_{C}$ are K\"ahler. Fu and Zhou showed in \cite{fu} that, if the pair $(g,J)$ belongs to the Gray-Hervella class $\mathcal{W}_{2}\oplus \mathcal{W}_{3}\oplus \mathcal{W}_{4}$ with $s=2s_{c}$, then $(g,J)$ is K\"ahler.
 Recently, Lejmi-Upmeier proposed the following Question (Remark 3.3 of \cite{integra}):%\info{verificar se essa citação está de acordo com o artigo publicado}\duvida{Não tenho acesso ao artigo publicado, nem do IMECC.}
\begin{question} \label{question-princ}
Do higher-dimensional closed almost Hermitian non-K\"ahler manifolds with $2s_{C}=s$ exist?
\end{question}

On the other hand, let us denote by $\nabla^t$ be the 1-parameter family of connections introduced by Gauduchon in \cite{gau}: \begin{eqnarray*}
g(\nabla^{t}_{X}Z,Y)&=&g(D_{X}Z,Y)-\frac{1}{2}g(J(D_{X}J)Z,Y)\\
&+&\frac{t}{4}g((D_{JY}J+JD_{Y}J)X,Z)-\frac{t}{4}g((D_{JZ}J+JD_{Z}J)X,Y),
\end{eqnarray*}
where $D$ is the Levi-Civita connection.
 It is well know that $\nabla^1$ coincides with the Chern connection.
 According to \cite{fu}, let us define the Hermitian scalar curvatures $$s_{1}(t)=R^{t}(u_{\overline{i}},u_{i},u_{j},u_{\overline{j}})$$
and $$s_{2}(t)=R^{t}(u_{\overline{i}},u_{j},u_{i},u_{\overline{j}}),$$
where  $R^{t}$ is the curvature tensor associated to $\nabla^{t}$ and $\{u_{i}\}_{i=1,2,\cdots,n}$ is a unitary frame. We will call $s_1(t)$ and $s_2(t)$ the first and second Hermitian scalar curvatures. One of the main features of the generalized flag manifolds  relies in the fact that the scalar curvature $s_1(t)$ is constant. This manner, thereon we shall denote it solely by $s_1$. Furthermore, it coincides with the Chern scalar curvature $s_C$. See Section \ref{scal-hermit} for details about the comments above.  

With the discussion above in mind, let us rephrase the Question \ref{question-princ} in our context:
\begin{question} \label{question-princ-flag}
Do higher-dimensional generalized flag manifolds $G/K$ equipped with $G$-invariant almost Hermitian structure  with $2s_{1}=s$ exist?
\end{question} 

In our previous \cite{nosso2} we study the Question \ref{question-princ-flag} in a class of {\em generalized flag manifolds} equipped with an invariant almost Hermitian structure. In this work we provided a complete answer to Question \ref{question-princ-flag} in the context of invariant Hermitian structures for a large class of homogeneous space, namely flag manifolds with two and three isotropy summands. As we have done in \cite{nosso2}, we proceed by computing explicitly the Hermitian scalar curvatures using the tools of Lie Theory. We recall the description of these manifolds in Sections \ref{sec:flag2somandos} and \ref{sec:flag3somandos}.

We obtain the following results (for a description of invariant metrics and almost complex structures on flag manifolds, see Section \ref{sec:prelim}):

\begin{theorem}[= Theorem \ref{teo-2somandos}]
Let $G/K$ be a generalized flag manifold with two isotropy summands, and let $g=(\lambda_1,\lambda_2)$ be an invariant Riemannian metric. Then $2s_1-s=0$ holds for $G/K$ if the pair $(g,J_i)$ satisfies the following conditions: 

\begin{itemize}
    \item For the invariant complex struture $J=(+,+)$, the metric $g$ is parameterized by $\lambda_2=2\lambda_1$. In this case, $(g,J)$ is K\"ahler.
    \item For the invariant almost complex structure $J_1=(+,-)$, the metric $g$ is parameterized by $\lambda_2=2\lambda_1( \sqrt{10}+3)$. In this case, $(g, J_1)\in\mathcal{W}_1\oplus \mathcal{W}_2$. 
\end{itemize}
\end{theorem}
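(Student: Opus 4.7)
The plan is to reduce the equation $2s_1-s=0$ to an algebraic relation in the two positive parameters $(\lambda_1,\lambda_2)$ that define the invariant metric, and then solve it for each invariant almost complex structure. For a flag manifold with two isotropy summands $\mathfrak{m}=\mathfrak{m}_1\oplus\mathfrak{m}_2$ the relevant combinatorial data is very small: a single non-zero Wang--Ziller triple $[1,1,2]$ encoding the bracket $[\mathfrak{m}_1,\mathfrak{m}_1]\subset\mathfrak{m}_2$, the dimensions $d_i=\dim\mathfrak{m}_i$, and, for $J$, a pair of signs $(\epsilon_1,\epsilon_2)$ which, up to overall conjugation, produces exactly the two structures $J=(+,+)$ and $J_1=(+,-)$.

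The first step is to substitute these data into the standard formula for the Riemannian scalar curvature of an invariant metric on a compact homogeneous space, and into the Chern/Hermitian scalar curvature formula recalled in Section~\ref{scal-hermit}. In the integrable case $J=(+,+)$ every invariant metric is Hermitian and $s_1=s_C$ is computed with the Chern connection; after clearing denominators I expect $2s_1-s=0$ to collapse to a single linear relation in $\lambda_1,\lambda_2$, which should be $\lambda_2=2\lambda_1$: this is precisely the ratio that makes $(g,J)$ the canonical K\"ahler structure on $G/K$ with two summands. In the non-integrable case $J_1=(+,-)$, flipping $\epsilon_2$ modifies both $\nabla^1$ and its curvature, and the equation $2s_1-s=0$ becomes homogeneous in $(\lambda_1,\lambda_2)$; setting $t=\lambda_2/\lambda_1$ should reduce it to a quadratic in $t$ whose unique positive root is $t=2(\sqrt{10}+3)$.

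The main technical obstacle will be the sign-and-normalization bookkeeping in the Chern-curvature computation for the non-integrable $J_1$. The Gauduchon connection $\nabla^1$ differs from the Levi-Civita connection $D$ by a term involving $JDJ$ that vanishes in the integrable pattern $(+,+)$ but not in $(+,-)$; its contribution to $s_1$ picks up extra terms proportional to the structure constant $[1,1,2]$ and to ratios of the $\lambda_i$, and a miscount of a single factor of $2$ would shift the coefficient $\sqrt{10}$. I plan to carry out the computation on a unitary frame adapted to the decomposition $\mathfrak{m}_1\oplus\mathfrak{m}_2$, and cross-check by recovering the $(+,+)$ formula after switching off the sign flip.

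Finally, to identify the Gray--Hervella class of $(g,J_1)$ I would compute the Nijenhuis tensor $N_{J_1}$ and the form $d\omega$ directly from the bracket $[\mathfrak{m}_1,\mathfrak{m}_1]\subset\mathfrak{m}_2$. Since all non-trivial components are supported on the single triple $[1,1,2]$, both tensors are forced to be totally skew-symmetric, which rules out the classes $\mathcal{W}_3$ (Hermitian) and $\mathcal{W}_4$ (locally conformal K\"ahler) and places $(g,J_1)$ in $\mathcal{W}_1\oplus\mathcal{W}_2$, as claimed.
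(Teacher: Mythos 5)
Your overall strategy is workable, but it diverges from the paper's route in a way that costs you, and it contains one genuine error. The paper does not compute $s$ and $s_1$ separately from the Wang--Ziller formula and the Gauduchon connection; it uses the Fu--Zhou identity (Theorem \ref{teo1}) for the \emph{difference}, which after the cosymplectic Lemma \ref{lema-sm-neg} (so $\alpha_F=0$ and $\delta\alpha_F=0$) reads
$2s_1-s=-\tfrac56\|(dF)^-\|^2+\tfrac18\|N^0\|^2+\tfrac12\|(dF)^+\|^2$,
and then plugs in the norms from Proposition \ref{prop-norma}. For $J=(+,+)$ the only zero-sum triples are $(1,2)$-triples, so $\|(dF)^-\|^2=\|N^0\|^2=0$ and $2s_1-s=\tfrac12\|(dF)^+\|^2=L\,(2\lambda_1-\lambda_2)^2/(12\lambda_1^2\lambda_2)\ge 0$; the nonnegativity is what shows there are no non-K\"ahler solutions, a structural point your ``collapse to a linear relation'' glosses over. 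For $J_1=(+,-)$ the same triples become $(0,3)$-triples, $\|(dF)^+\|^2=0$, and the identity yields $2s_1-s=K(-4\lambda_1^2-12\lambda_1\lambda_2+\lambda_2^2)/(12\lambda_1^2\lambda_2)$, i.e.\ the quadratic $t^2-12t-4=0$ in $t=\lambda_2/\lambda_1$ with unique positive root $t=6+2\sqrt{10}=2(\sqrt{10}+3)$. Your plan leaves exactly these computations as expectations; they are the entire content of the proof, and the sign bookkeeping you worry about is precisely what the precomputed norms in Proposition \ref{prop-norma} are for.

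The genuine gap is in your Gray--Hervella argument. You claim that because everything is supported on the single triple type $[1,1,2]$, the Nijenhuis tensor is ``forced to be totally skew-symmetric.'' That is false for $\lambda_1\neq\lambda_2$: one has $g(N(X_\alpha,X_\beta),X_\gamma)=-\lambda_\gamma m_{\alpha,\beta}(\cdots)$, and the factor $\lambda_\gamma$ breaks total skewness unless $\lambda_1=\lambda_2$. Worse, if $N$ \emph{were} totally skew one would have $N^0=N-\mathfrak{b}N=0$ and the structure would be nearly K\"ahler, i.e.\ in $\mathcal{W}_1$ --- and then, by the computation above, $2s_1-s=\tfrac{1}{12}K(-4-12+1)\lambda_1^{-1}<0$ has no zero, contradicting the very solution you are trying to exhibit. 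The correct reason $(g,J_1)\in\mathcal{W}_1\oplus\mathcal{W}_2$ is the quasi-K\"ahler characterization $(dF)^+_0=\alpha_F=0$: the first vanishes because $J_1$ has no $(1,2)$-triples, and the second is Lemma \ref{lema-sm-neg}. You should replace your skew-symmetry argument with this.
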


The analysis of the equation $2s_1-s=0$ in the case of flag manifolds with three isotropy summands is a little bit trickier, so we deal with the question working case-by-case. The description of the flag manifolds with three summands splits into two sub-families: {\em Type I} listed in Table \ref{table:3comp-tipo1} and {\em Type II} listed in Table \ref{table:3comp-tipo2}. Theorem \ref{teoB-intro} below encodes our results in a simple fashion. The details and explicit computations can be found in Section \ref{sec:flag3somandos}. It is worth to point out that when $(g,J)$ is K\"ahler, the equation $2s_1-s=0$ is satisfied. In this case, by {\em non-trivial solution of $2s_1-s=0$} we mean that the pair $(g,J)$ is non-K\"ahler. 

\begin{theorem}\label{teoB-intro}
Let $G/K$ be a generalized flag manifold with three isotropy summands, equipped with an invariant almost complex structure (IACS) $J_i$, $i=1,\ldots,4$. Then there exists an invariant Riemannian metric $g$ such that the almost Hermitian structure $(g,J_i)$ admits a non-trivial solution of the equation $2s_1-s=0$ in the cases listed in the Table \ref{tab:3somandos-into}. Moreover, for each IACS admiting non-trivial solution we provide explicity families of invariant K\"ahler-like scalar metrics $g$ on $G/K$. 

%%%%%%%%%%%%%%%%%%%%%%

% Please add the following required packages to your document preamble:
% \usepackage{multirow}
\begin{table}[h!] 
\caption{K\"ahler-like scalar curvature metric on flag manifolds with 3 isotropy summands.} \label{tab:3somandos-into}
\begin{tabular}{|l|l|c|c|l|}
\hline
IACS                   & \multicolumn{1}{c|}{\begin{tabular}[c]{@{}c@{}}Flag manifold \\ $G/K$\end{tabular}} & \multicolumn{1}{l|}{Integrable ?} & \begin{tabular}[c]{@{}c@{}}non-trivial solution\\ $2s_1-s=0$\end{tabular} & \multicolumn{1}{c|}{Details}                    \\ \hline
\multirow{2}{*}{$J_1=(+,+,+)$} & type I                                                                              & \ding{51}                       & \ding{55}                                                               & \multicolumn{1}{c|}{{\rm Thm} $\ref{teo3somandosJ1}$} \\ \cline{2-5} 
                       & type II                                                                             & \ding{51}                       & \ding{55}                                                               & \multicolumn{1}{c|}{{\rm Thm} $\ref{teo3somandosJ1}$} \\ \hline
\multirow{2}{*}{$J_2=(-,+,+)$} & type I                                                                              & \ding{55}                       & \ding{51}                                                               & {\rm Thm} $\ref{teo3somandosJ2}$                      \\ \cline{2-5} 
                       & type II                                                                             & \ding{51}                       & \ding{55}                                                               & {\rm Thm} $\ref{teo3somandosJ2}$                      \\ \hline
\multirow{2}{*}{$J_3=(+,-,+)$} & type I                                                                              & \ding{55}                       & \ding{51}                                                               & {\rm Thm} $\ref{teo3somandosJ3}$                      \\ \cline{2-5} 
                       & type II                                                                             & \ding{51}                       & \ding{55}                                                               & {\rm Thm} $\ref{teo3somandosJ3}$                      \\ \hline
\multirow{2}{*}{$J_4=(+,+,-)$} & type I                                                                              & \ding{55}                       & \ding{51}                                                               & {\rm Thm} $\ref{teo3somandosJ4}$                      \\ \cline{2-5} 
                       & type II                                                                             & \ding{55}                       & \ding{51}                                                               & {\rm Thm} $\ref{teo3somandosJ4}$                      \\ \hline
\end{tabular}
\end{table}

\end{theorem}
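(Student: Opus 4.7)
The plan is to reduce Theorem \ref{teoB-intro} to a finite case-by-case analysis, one row of Table \ref{tab:3somandos-into} at a time, by exploiting the fact that on a flag manifold with three isotropy summands $\mathfrak{m}=\mathfrak{m}_1\oplus\mathfrak{m}_2\oplus\mathfrak{m}_3$ every $G$-invariant Riemannian metric has the form $g=(\lambda_1,\lambda_2,\lambda_3)$ with $\lambda_i>0$, and every invariant almost complex structure is a sign choice $J_i=(\varepsilon_1,\varepsilon_2,\varepsilon_3)$ on the summands. First I would collect, from Sections \ref{sec:flag2somandos}, \ref{sec:flag3somandos} and the Hermitian scalar-curvature formalism of Section \ref{scal-hermit}, the explicit Lie-theoretic expressions for the Riemannian scalar curvature $s(\lambda_1,\lambda_2,\lambda_3)$ and the Chern scalar curvature $s_1(\lambda_1,\lambda_2,\lambda_3;J_i)$ in terms of the Chevalley-like constants $[ijk]$ associated to the decomposition $\mathfrak{m}=\mathfrak{m}_1\oplus\mathfrak{m}_2\oplus\mathfrak{m}_3$ and in terms of the dimensions $d_i=\dim\mathfrak{m}_i$.

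Next, I would split the argument according to the type of flag. For Type I manifolds (Table \ref{table:3comp-tipo1}) only one triple bracket constant $[123]$ is nonzero, while for Type II (Table \ref{table:3comp-tipo2}) there is an extra nonzero $[ijk]$ coming from the additional Lie bracket relations. In each of the eight rows of Table \ref{tab:3somandos-into}, substituting the appropriate $J_i$ and the appropriate $[ijk]$ data into the formulas yields an explicit rational expression for $2s_1-s$; after clearing denominators one obtains a polynomial equation $P_{i,\text{type}}(\lambda_1,\lambda_2,\lambda_3)=0$, homogeneous in the $\lambda$'s, which I would normalize by fixing $\lambda_1=1$ so the unknowns reduce to $(\lambda_2,\lambda_3)\in\mathbb{R}_{>0}^2$. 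The Kähler locus (when $J_i$ is integrable) is a known one-parameter family inside this solution set, obtained from the Kähler condition $\lambda_j=\lambda_i+\lambda_k$ for the appropriate pair; the theorem then asks whether $P_{i,\text{type}}=0$ has positive solutions other than that Kähler line.

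For each row I would carry out the following steps, following the pattern already used successfully in \cite{nosso2} for the two-summand case and for Theorem \ref{teo-2somandos}: (i) write down $P_{i,\text{type}}$; (ii) restrict to the Kähler line (when applicable) and verify $P_{i,\text{type}}=0$ there as a sanity check, in accordance with \cite{moroianu}; (iii) study the residual curve, either by eliminating one variable to get a single-variable polynomial whose positive real roots I can enumerate, or by factoring $P_{i,\text{type}}$ into a Kähler factor times a genuinely almost-Hermitian factor. The outcome for each case is then the appearance or non-appearance of an additional solution, producing the \ding{51}/\ding{55} pattern of the table. The integrability column is a separate and simpler check: it follows directly from the Newlander--Nirenberg condition specialized to invariant structures, which for a flag manifold amounts to verifying, for each triple of summands with $[ijk]\neq 0$, that the product $\varepsilon_i\varepsilon_j\varepsilon_k$ satisfies the sign rule recalled in Section \ref{sec:prelim}.

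The main obstacle I anticipate is not conceptual but computational: the polynomials $P_{i,\text{type}}$ are cubic or higher in each $\lambda_j$ and depend on the specific flag (equivalently, on the Cartan data determining the $d_i$ and $[ijk]$), so for the Type II flags, where a second structure constant contributes, the factorization step can fail to be canonical and one has to treat several sub-families separately before concluding whether the residual factor has positive roots. To keep the exposition manageable, I would present one representative computation in detail (say $J_2=(-,+,+)$ on a Type I flag, proving the \ding{51} in that row by exhibiting an explicit one-parameter family of positive solutions disjoint from the Kähler locus), and then defer the remaining rows to Theorems \ref{teo3somandosJ1}--\ref{teo3somandosJ4}, where the analogous polynomial manipulations are carried out. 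The uniform conclusion of the table follows once all eight polynomial analyses are assembled.
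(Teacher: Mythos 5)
Your overall strategy --- reduce the theorem to an eight-row case analysis, express $2s_1-s$ as a homogeneous rational function of $(\lambda_1,\lambda_2,\lambda_3)$ with coefficients given by sums of squared structure constants, locate the K\"ahler locus, and decide whether extra positive solutions exist --- is exactly what the paper does in Theorems \ref{teo3somandosJ1}--\ref{teo3somandosJ4}, via the norms of Proposition \ref{prop-norma} substituted into formula (\ref{formula-curvatura}). One remark on route: the paper never computes $s$ and $s_1$ separately (and never needs the dimensions $d_i$); it uses the Fu--Zhou identity of Theorem \ref{teo1} together with the vanishing of the Lee form (Lemma \ref{lema-sm-neg}) to get $2s_1-s$ directly as a combination of $\|(dF)^-\|^2$, $\|N^0\|^2$ and $\|(dF)^+\|^2$, in which all $d_i$-dependent terms have already cancelled. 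Your plan of writing $s(\lambda;d_i,[ijk])$ and $s_1(\lambda;d_i,[ijk])$ independently and subtracting would work in principle but is heavier and requires a separate derivation of the Chern scalar curvature that the paper's shortcut avoids.

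There is, however, one concrete error in your Lie-algebraic input that would corrupt the case analysis: you have the triple structure of Types I and II reversed. For Type I one has $[\mathfrak{m}_1,\mathfrak{m}_1]\subset\mathfrak{k}\oplus\mathfrak{m}_2$ and $[\mathfrak{m}_1,\mathfrak{m}_2]\subset\mathfrak{m}_1\oplus\mathfrak{m}_3$, so \emph{both} families of zero-sum triples occur, namely $(\alpha,\beta,\gamma)\in\mathfrak{m}_1\times\mathfrak{m}_1\times\mathfrak{m}_2$ and $(\alpha,\beta,\gamma)\in\mathfrak{m}_1\times\mathfrak{m}_2\times\mathfrak{m}_3$; for Type II one has $[\mathfrak{m}_i,\mathfrak{m}_i]\subset\mathfrak{k}$ for all $i$, so \emph{only} the $\mathfrak{m}_1\times\mathfrak{m}_2\times\mathfrak{m}_3$ triples survive --- not the other way around, as you assert. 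This is not cosmetic: the $\mathfrak{m}_1\times\mathfrak{m}_1\times\mathfrak{m}_2$ triples are precisely the ones that become $(0,3)$-triples under $J_2=(-,+,+)$ and $J_3=(+,-,+)$, producing the nonzero $K$, hence the negative term $-\tfrac{5}{6}\|(dF)^-\|^2$ that makes non-trivial (non-K\"ahler) solutions of $2s_1-s=0$ possible on Type I. With your assignment, the polynomial for Type II in those rows would acquire the $K$-terms and the one for Type I would reduce to a perfect square $L\,(\lambda_1-\lambda_2+\lambda_3)^2/(12\lambda_1\lambda_2\lambda_3)\ge 0$, so you would derive the table with the Type I and Type II entries interchanged in the $J_2$ and $J_3$ rows --- contradicting Table \ref{tab:3somandos-into}. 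Before executing the plan you must correct the bracket relations (as recorded at the start of Section \ref{sec:flag3somandos}) and, for Type I, keep track of \emph{two} constants (the paper's $K$ and $L$, or $L_1$ and $L_2$ in the integrable case), one for each family of triples.
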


In the last Section of the paper we work out three examples of flag manifolds with three isotropy summands: $G_{2}/U(2)$,  $SU(n+2)/S(U(n)\times U(1) \times U(1))$ and $F_{4}/SU(3)\times SU(2)\times U(1)$, providing family of example of {K\"ahler-like scalar curvature metrics} by using the techniques we have developed in this work.

\

{\em Acknowledgments: } We would like to thank Leonardo Cavenaghi and Eder Correa for for useful discussions. LG is partially supported by FAPESP grants 2018/13481-0, 2021/04003-0, 2021/04065-6  and CNPq grant no. 305036/2019-0.

\section{Invariant geometry of flag manifolds}\label{sec:prelim}

In this section we recall some well known results about the almost Hermitian geometry of generalized flag manifolds, from a Lie theoretical point of view. %References about Riemannian and Hermitian geometry of flag manifolds are \cite{alek-flag}, \cite{grego}, \cite{besse}, \cite{sn}, \cite{rita}.
\subsection{Generalized flag manifolds}
Let $\got{g}^\mathbb{C}$ be a complex semi-simple Lie algebra. Given a Cartan subalgebra $\got{h}$ of $\got{g}^\mathbb{C}$, denotes by $\Pi$ the set of roots with respect to the pair $(\got{g}^\mathbb{C},\got{h})$. We have the following root space decomposition
$$
\got{g}^\mathbb{C}=\got{h}\oplus\sum_{\alpha\in \Pi}\got{g}_{\alpha},
$$ 
where $\got{g}_{\alpha}=\{X\in\got{g}:\forall H\in \got{h},[H,X]=\alpha(H)X \}$ denote the corresponding $1$-dimensional (complex) root space. 

The Cartan-Killing form is defined by  
$$
\langle X,Y\rangle=\tr(\ad(X)\ad(Y))
$$ 
and its restriction to $\got{h}$ is non-degenerate. Given a root $\alpha\in \got{h}^{*}$ we define $H_{\alpha}$ by $\alpha(\cdot)=\langle H_{\alpha},\cdot\rangle$. Moreover we denote $\got{h}_{\mathbb{R}}=\Span_{\mathbb{R}}\{H_{\alpha}:\alpha\in \Pi\}$ and $\got{h}_{\mathbb{R}}^{*}$ being the real subspace of $\got{h}^{*}$ spanned by the roots.

We will fix a Weyl basis of $\got{g}^\mathbb{C}$ as follows:
$$
X_{\alpha}\in\got{g}_{\alpha}, \ \mbox{ such that } \ \langle X_{\alpha},X_{-\alpha}\rangle=1 \ \mbox{and} \ [X_{\alpha},X_{\beta}]=m_{\alpha,\beta}X_{\alpha+\beta},
$$ 
with $m_{\alpha,\beta}\in\mathbb{R}$, $m_{-\alpha,-\beta}=-m_{\alpha,\beta}$ and $m_{\alpha,\beta}=0$ if $\alpha+\beta$ is not a root. 
%\bigskip

Let $\Pi^{+}\subset \Pi$ be a choice of positive roots, $\Sigma$ be the corresponding system of simple roots and $\Theta$ be a subset of $\Sigma$. We will use the following notation: $\langle\Theta\rangle$ is the set of roots spanned by $\Theta$, $\Pi_{M}=\Pi\setminus\langle{\Theta}\rangle$ be the set of complementary roots and $\Pi_{M}^{+}$ be the set of complementary positive roots.

Let 
$$
\got{p}_{\Theta}=\got{h}\oplus
\sum_{\alpha\in\langle\Theta\rangle^{+}}\got{g}_{\alpha}\oplus
\sum_{\alpha\in\langle\Theta\rangle^{+}}\got{g}_{-\alpha}\oplus
\sum_{\beta\in \Pi_{M}^{+}}\got{g}_{\beta}
$$
be a parabolic sub-algebra of $\got{g}^\mathbb{C}$ determined by $\Theta$.

The {\em generalized flag manifold} $\mathbb{F}_{\Theta}$ is the homogeneous space 
$$
\mathbb{F}_{\Theta}=G^\mathbb{C}/P_{\Theta},
$$
where $G^\mathbb{C}$ is a complex connected Lie group with Lie algebra $\got{g}^\mathbb{C}$ and $P_{\Theta}$ is the normalizer of $\got{p}_{\Theta}$ in $G^\mathbb{C}$.

Let $\got{g}$ be the compact real form of $\got{g}^\mathbb{C}$. % (that is $\got{u}^{\mathbb{C}}$ is $\got{g}$, being $\mathfrak{u}$ compact). 
We have
$$
\got{g}=\Span_{\mathbb{R}}\{i\got{h}_{\mathbb{R}},A_{\alpha},iS_{\alpha}; \alpha\in \Pi\},
$$ 
where $A_{\alpha}=X_{\alpha}-X_{-\alpha}$ and $S_{\alpha}=X_{\alpha}+X_{-\alpha}$. We remark that the Lie algebra $\got{g}$ is semi-simple.

Denote by $G$ the compact real form of $G^\mathbb{C}$ with $\Lie(G)=\got{g}$ and let $\got{k}_{\Theta}$ the Lie algebra of  $K_{\Theta}:=P_{\Theta}\cap G$. It is well known that the Lie group $K_{\Theta}\subset G$ is a centralizer of a torus. 

We have 
$$
\got{k}_{\Theta}^{\mathbb{C}}=\got{h}\oplus
\sum_{\alpha\in\langle\Theta\rangle^{+}}\got{g}_{\alpha}\oplus
\sum_{\alpha\in\langle\Theta\rangle^{+}}\got{g}_{-\alpha},
$$
where $\got{k}_{\Theta}^{\mathbb{C}}$ denotes the complexification of the real Lie algebra $\got{k}_{\Theta}=\got{g}\cap \got{p}_{\Theta}$. 

The Lie group $G$ also acts transitively on $\mathbb{F}_{\Theta}$ and we have 
$$
\mathbb{F}_{\Theta}=G^\mathbb{C}/P_{\Theta}=G/(P_{\Theta}\cap G)=G/K_{\Theta}.
$$

When $\Theta=\emptyset$ the parabolic sub-algebra is a Borel sub-algebra of $\got{g}^\mathbb{C}$ and $\mathbb{F}=G^\mathbb{C}/P=G/T$ is called {\it full flag manifold}, where $T=P\cap G$ is a maximal torus of $G$. %A álgebra de Lie de $T$ é 
\bigskip

Recall that the flag manifold $\mathbb{F}_{\Theta}=G/K_{\Theta}$ is a reductive homogeneous space, that is, there exists a subspace $\got{m}$ of $\got{g}$ such that 
$$
\got{g}=\got{k}_{\Theta}\oplus \got{m} \ \ \mbox{and} \ \ 
\Ad(k)\got{m}\subseteq \got{m}, \forall k\in K_{\Theta},
$$
and one can identify the tangent space $T_{x_{0}}\mathbb{F}_{\Theta}$ with $\got{m}$, where $x_{0}=eK_{\Theta}$ is the origin of the $\mathbb{F}_{\Theta}$ (trivial coset).

Let us give a description of the tangent space $\got{m}$ in terms of the Lie algebra structure of $\got{g}$ as follows:
$$
\got{g}=\got{k}_{\Theta}\oplus\sum_{\beta\in \Pi_{M}}{\got{u}_{\beta}},
$$
with $\got{u}_{\beta}=\got{g}\cap(\got{g}_{\beta}\oplus \got{g}_{-\beta})$ and for each root $\beta \in \Pi_{M}$, $\got{u}_{\beta}$ has real dimension two and it is spanned by $A_{\beta}$ and $\sqrt{-1}S_{\beta}$, and we have the following identification $\got{m}=\sum_{\beta\in \Pi_{M}}\got{u}_{\beta}$.

\

An important tool to study invariant tensors on homogeneous space is the {\em isotropy representation}. We will restrict ourselves to the situation of flag manifolds. In this case, since the flag manifolds are {\em reductive} homogeneous spaces it is well know that the isotropy representation is equivalent to the following representation
%$$\left. \Ad(k) \right|_{\got{m}}:\got{m}\longrightarrow\got{m}$$
$$\Ad(k)\mid_{\got{m}}:\got{m}\longrightarrow\got{m}.$$

The isotropy representation decomposes $\got{m}$ into irreducible components, that is, 
$$
\got{m}=\got{m}_{1}\oplus\got{m}_{2}\oplus\cdots\oplus\got{m}_{n},
$$
where each component $\got{m}_{i}$ satisfies $\Ad(K_{\Theta})(\got{m}_{i})\subset \got{m}_{i}$. 
Moreover, each component $\got{m}_{i}$ is irreducible, that is, the only invariant sub-spaces of $\got{m}_{i}$ by $\Ad(K_{\Theta})|_{\got{m}_{i}}$ are the trivial sub-spaces. We will call the sub-spaces $\got{m}_{i}$ by {\it isotropic summands} of the isotropy representation. 

\begin{obs}
From now on we will omit the symbol $\Theta$ whenever there is no risk of confusion. We will denote a flag manifold just by $\mathbb{F}=G/K$.
\end{obs}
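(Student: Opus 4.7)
The plan is, first and foremost, to recognize that the displayed item is a typographical announcement rather than a mathematical assertion: the author declares that, from this point on, $\mathbb{F}_{\Theta} = G/K_{\Theta}$ will be written simply as $\mathbb{F} = G/K$. There is no proposition whose truth needs to be established, and hence in the strict sense there is no proof to produce.

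The only ``step'' one might even nominally carry out is a sanity check that the abbreviated notation is unambiguous. This is immediate from the construction already done earlier in the section: once the complex semisimple Lie algebra $\got{g}^{\mathbb{C}}$, a Cartan subalgebra $\got{h}$, a choice of positive roots $\Pi^{+}$ (and hence of simple roots $\Sigma$), and a subset $\Theta \subset \Sigma$ are all fixed, the parabolic subalgebra $\got{p}_{\Theta}$ is uniquely determined; consequently so are $P_{\Theta}$, the real isotropy group $K_{\Theta} = P_{\Theta} \cap G$, and therefore the coset space $G/K_{\Theta}$. Omitting the subscript $\Theta$ loses no information so long as $\Theta$ is pinned down by the surrounding discussion, which is precisely the content of the qualifier ``whenever there is no risk of confusion.''

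The main obstacle is, to be honest, that there is none. The remark contains no mathematical content to obstruct; its sole function is housekeeping, lightening the notation for the case-by-case computations in Sections \ref{sec:flag2somandos} and \ref{sec:flag3somandos}. In each of those sections the subset $\Theta$ will be implicitly fixed by specifying the compact simple Lie group $G$ together with the requirement that the isotropy representation of $G/K_{\Theta}$ decompose into exactly two or three irreducible summands, so no ambiguity can arise in the ensuing arguments. Accordingly, my ``proof proposal'' amounts to acknowledging the convention, noting the one-line well-definedness observation above, and passing on to the substantive theorems.
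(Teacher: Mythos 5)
You are right that this remark is purely a notational convention with no mathematical content, and the paper accordingly supplies no proof for it. Your reading matches the paper's treatment exactly, and your brief well-definedness observation (that $\Theta$ determines $K_{\Theta}$ and hence $G/K_{\Theta}$) is a harmless and correct addition.
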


\subsection{Invariant Metrics} 
Let us denote by $(\cdot,\cdot)$ the Cartan-Killing form of $\got{g}$. For each $\Ad(K)-$invariant inner product $(\cdot ,\cdot)_{\Lambda}$ on $\mathfrak{m}$, there exists a unique $(\cdot,\cdot)-$self-adjoint, positive operator $\Lambda :\mathfrak{m}\longrightarrow\mathfrak{m}$ commuting with $\Ad(k)\left|_{\mathfrak{m}}\right.$ for all $k\in K$ such that 
$$(X,Y)_{\Lambda}=(\Lambda X,Y), \,\, X,Y\in\mathfrak{m}.$$

Therefore an invariant Riemannian metric $g$ on $\mathbb{F}$ is completely determined by the invariant inner product $(X,Y)_{\Lambda}$, and the inner product is determined by $\Lambda$. 

The vectors $A_{\alpha}$, $\sqrt{-1}S_{\alpha}$, $\alpha\in\Pi$,  are eigenvectors of $\Lambda$ associated to the same eigenvalue $\lambda_\alpha$. 

The invariant inner product $(X,Y)_{\Lambda}$ admits a natural extension to a symmetric bi-linear form on  $\got{m}^{\mathbb{C}}$. On the complexified tangent space we have $\Lambda(X_{\alpha})=\lambda_{\alpha}X_{\alpha}$ with  $\lambda_{\alpha}>0$ and $\lambda_{-\alpha}=\lambda_{\alpha}$. 

\

{\bf Notation:} In the sequence of this work, we will abuse notation and will denote the invariant metric $g$ just by the operator $\Lambda$ associated to the invariant inner product. We also denote the invariant metric $g$ just by a $n$-tuple of positive numbers $(\lambda_1,\ldots, \lambda_n)$ representing the eigenvalues of the operator $\Lambda$ and is parameterized by the number of irreducible components.   \qed

\subsection{Invariant Almost Complex Structures}
\begin{definition}
An almost complex structure on the flag manifold $\mathbb{F}$ is a tensor $J$ such that for every point $x\in \mathbb{F}$, there is an endomorphism  $J:T_{x}\mathbb{F}\longrightarrow T_{x}\mathbb{F}$ such that $J^{2}=-\id$.
\end{definition}

\begin{definition}
A $G$-invariant almost complex structure $J$ on $\mathbb{F}=G/K$ is an almost complex structure that satisfies  
$$
J_{ux}=dE_{u}J_{x}dE_{u^{-1}},\ \mbox{for all } \ u\in G,
$$ 
where $dE_{u}:T(G/K) \to T(G/K)$ denotes the differential of the left translation by $u$, that is, for all $X\in T_{x}(G/K)$ we have
$$
dE_{u}J_{x}X=J_{ux}dE_{u}X.
$$
\end{definition}

The following result allows us to describe invariant almost complex structures on flag manifolds in terms of complex structure in a simple vector space, namely the tangent space at the origin (trivial coset) of the homogeneous space.

\begin{proposition} \label{prop-corresp-iacs}
There exist an $1-1$ correspondence between a $G$-invariant almost complex structure $J$ and a linear endomorphism $J_{x_{0}}: T_{x_{0}}\mathbb{F} \to T_{x_{0}}\mathbb{F}$ satisfying $J_{x_{0}}^2=-\id$ and commute with the isotropy representation, that is, 
$$
\Ad^{G/K}(k)J_{x_{0}}=J_{x_{0}}\Ad^{G/K}(k), \ \mbox{for all } \ k\in K. 
$$
\end{proposition}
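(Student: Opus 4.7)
The plan is to establish the correspondence in the standard way for invariant tensors on reductive homogeneous spaces, by transporting an endomorphism defined at the origin to every point using the transitive action of $G$, and conversely by restricting a globally defined invariant tensor to the tangent space at the origin.

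First, for the forward direction, given a $G$-invariant almost complex structure $J$ on $\mathbb{F}=G/K$, I would simply take $J_{x_0}$ to be the restriction of $J$ to $T_{x_0}\mathbb{F}\cong\mathfrak{m}$. Clearly $J_{x_0}^2=-\id$. To check commutation with the isotropy representation, I would apply the invariance relation $J_{ux}=dE_u\,J_x\,dE_{u^{-1}}$ in the special case $x=x_0$ and $u=k\in K$. Since $k\cdot x_0=x_0$, this becomes $J_{x_0}=dE_k\,J_{x_0}\,dE_{k^{-1}}$, and because the isotropy representation $\Ad^{G/K}(k)$ coincides with $dE_k|_{T_{x_0}\mathbb{F}}$ under the identification $T_{x_0}\mathbb{F}\cong\mathfrak{m}$, this reads $\Ad^{G/K}(k)\,J_{x_0}=J_{x_0}\,\Ad^{G/K}(k)$.

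For the backward direction, given $J_{x_0}$ satisfying $J_{x_0}^2=-\id$ and commuting with the isotropy representation, I would define, at each point $x\in\mathbb{F}$, the endomorphism $J_x$ by choosing any $u\in G$ with $u\cdot x_0=x$ and setting
\[
J_x \ce dE_u\,J_{x_0}\,dE_{u^{-1}}.
\]
The main step, and the key technical point, is to check that this is well defined, i.e.\ independent of the choice of $u$. If $u'$ is another representative, then $u'=uk$ for some $k\in K$, and one computes
\[
dE_{uk}\,J_{x_0}\,dE_{(uk)^{-1}} = dE_u\bigl(dE_k\,J_{x_0}\,dE_{k^{-1}}\bigr)dE_{u^{-1}} = dE_u\,J_{x_0}\,dE_{u^{-1}},
\]
where the last equality uses precisely the hypothesis that $J_{x_0}$ commutes with $\Ad^{G/K}(k)=dE_k|_{T_{x_0}\mathbb{F}}$. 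This is the crux of the argument; once well-definedness is secured, the identity $J_x^2=-\id$ follows from $J_{x_0}^2=-\id$, and smoothness follows because locally one can choose a smooth local section $x\mapsto u(x)$ of the principal $K$-bundle $G\to G/K$.

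Finally, I would verify that the two constructions are mutually inverse. Evaluating the construction of $J$ from $J_{x_0}$ at $x=x_0$ with $u=e$ recovers $J_{x_0}$, and conversely, if one starts from an invariant $J$, restricts to $J_{x_0}$, and then reconstructs, the defining formula $J_x=dE_u\,J_{x_0}\,dE_{u^{-1}}$ is forced by the invariance relation applied to $u\cdot x_0=x$. Hence the correspondence is bijective. The main obstacle, as noted, is the well-definedness of $J_x$; the remaining checks (smoothness, $J^2=-\id$, invariance) are then routine.
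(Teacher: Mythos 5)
Your argument is correct and complete: it is the standard correspondence between invariant tensor fields on a reductive homogeneous space and isotropy-equivariant tensors at the origin, with the key point (well-definedness of $J_x$ under change of coset representative via the commutation hypothesis) handled properly. The paper states this proposition as a known fact without proof, and your argument is precisely the one it implicitly relies on.
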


An interesting consequence of the Proposition \ref{prop-corresp-iacs} is that $J(\got{g}_{\alpha})=\got{g}_{\alpha}$, where $\got{g}_\alpha$ is the root space associated to the root $\alpha \in \Pi$. The eigenvalue of $J$ are $\pm \sqrt{-1}$ and the eigenvectors on $\got{m}^{\mathbb{C}}$ are  $X_{\alpha}$, $\alpha\in\Pi$. Therefore $J(X_{\alpha})= \varepsilon_{\alpha} \sqrt{-1} X_{\alpha}$, with $\varepsilon_{\alpha}=\pm 1$ and  $\varepsilon_{\alpha}=-\varepsilon_{-\alpha}$. 

As a consequence of the discussion above we conclude that an invariant almost complex structure on $\mathbb{F}$ is completely described by a set of signals $$\left\{\varepsilon_{\alpha}=\pm 1, \  \alpha\in \Pi_M, \mbox{ satisfying } \varepsilon_{\alpha}=-\varepsilon_{-\alpha}\right\}.$$

\begin{proposition}[\cite{opa},13.4]\label{numest}
Consider the almost complex homogeneous space $M=G/K$  and assume that the isotropy representation admits a decomposition into irreducible and pairwise non-equivalent components, namely,  $\got{m}=\got{m}_{1}\oplus\got{m}_{2}\oplus\cdots\oplus\got{m}_{s}$.  Then $M$ admits $2^{s}$ invariant almost complex structures.  

If we identify the conjugated invariant almost complex structures, then $M$ admits $2^{s-1}$ invariant almost complex structures, up to conjugation.
\end{proposition}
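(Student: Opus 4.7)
My plan is to use the correspondence from Proposition \ref{prop-corresp-iacs} to reduce the counting problem to a purely algebraic one: count $\Ad(K)$-equivariant endomorphisms $J_{x_0}\colon \mathfrak{m}\to \mathfrak{m}$ satisfying $J_{x_0}^2=-\id$. The rest is an application of Schur's lemma combined with the root-space description of $\mathfrak{m}$.

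First, I would observe that the pairwise non-equivalence hypothesis on the irreducible summands $\mathfrak{m}_1,\ldots,\mathfrak{m}_s$ forces any $\Ad(K)$-equivariant endomorphism $J_{x_0}$ to preserve each summand. Indeed, the projection $\mathfrak{m}_i\to\mathfrak{m}_j$ induced by $J_{x_0}$ is a homomorphism of $\Ad(K)$-modules between non-equivalent irreducibles, hence zero whenever $i\neq j$ by Schur's lemma. Therefore it suffices to count, on each $\mathfrak{m}_i$ separately, the number of $\Ad(K)$-equivariant endomorphisms squaring to $-\id$.

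Next I would analyze each block $J_{x_0}|_{\mathfrak{m}_i}$ using the root-space decomposition recalled in the excerpt. Upon complexification, each $\mathfrak{m}_i^{\mathbb{C}}$ is a direct sum of $1$-dimensional complex root spaces $\mathfrak{g}_\alpha$ as $\alpha$ ranges over a subset $R_i\subset\Pi_M$ stable under $\alpha\mapsto-\alpha$. Since $J_{x_0}$ is $\Ad(K)$-equivariant and $\mathfrak{h}\subset\mathfrak{k}^{\mathbb{C}}$, it must preserve each root line $\mathfrak{g}_\alpha$ and hence act as a scalar there; the condition $J_{x_0}^2=-\id$ forces this scalar to be $\pm\sqrt{-1}$, so $J_{x_0}X_\alpha=\varepsilon_\alpha\sqrt{-1}X_\alpha$ with $\varepsilon_\alpha\in\{\pm 1\}$, and reality of $J_{x_0}$ on $\mathfrak{m}_i$ gives $\varepsilon_{-\alpha}=-\varepsilon_\alpha$. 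Finally, irreducibility of $\mathfrak{m}_i$ under $\Ad(K)$ implies that the set $R_i^+:=\{\alpha\in R_i:\varepsilon_\alpha=+1\}$ is a single Weyl-group-of-$K$ orbit, so $\varepsilon_\alpha$ is constant on $R_i^+:=R_i\cap\Pi_M^+$ and takes the opposite value on $-R_i^+$. In other words, each block $J_{x_0}|_{\mathfrak{m}_i}$ is determined by a single sign $\varepsilon_i\in\{\pm 1\}$, giving exactly two choices per summand.

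Multiplying over the $s$ summands yields $2^s$ invariant almost complex structures. For the count up to conjugation, I would note that conjugation sends $J$ to $-J$, which at the level of signs corresponds to $(\varepsilon_1,\ldots,\varepsilon_s)\mapsto(-\varepsilon_1,\ldots,-\varepsilon_s)$; this is a free involution on the set of $2^s$ structures, so the quotient has $2^{s-1}$ elements. The main potential obstacle is the step asserting that the sign is constant on each $R_i^+$: it is tempting to think this is automatic from Schur's lemma, but it really uses that $\mathfrak{m}_i$ is $\Ad(K)$-irreducible and not just $\Ad(K^0)$-stable, together with the fact that $\Ad(K)$ permutes the root lines $\mathfrak{g}_\alpha$ transitively within $R_i^+$; I would address this carefully by invoking that the real isotropy summand $\mathfrak{m}_i=\bigoplus_{\alpha\in R_i^+}\mathfrak{u}_\alpha$ carries, via $\varepsilon_i=+1$, a well-defined $\Ad(K)$-invariant complex structure, so the two choices genuinely glue to two $\Ad(K)$-equivariant endomorphisms on all of $\mathfrak{m}_i$.
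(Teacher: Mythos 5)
The paper offers no proof of this proposition: it is quoted from Borel--Hirzebruch \cite{opa}, 13.4, so there is no internal argument to compare yours against. On its own terms, your proposal follows the standard route and its architecture is sound: Schur's lemma forces any equivariant $J$ to be block-diagonal with respect to the pairwise non-equivalent summands, the root-space decomposition diagonalizes each block with eigenvalues $\varepsilon_\alpha\sqrt{-1}$, reality gives $\varepsilon_{-\alpha}=-\varepsilon_\alpha$, and conjugation $J\mapsto -J$ acts freely, giving $2^{s}$ and $2^{s-1}$. You also correctly identify the crux: why the sign must be constant on $R_i\cap\Pi_M^{+}$.

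However, the justification you give for that crux step is not correct as stated. The set $R_i\cap\Pi_M^{+}$ is in general \emph{not} a single orbit of the Weyl group of $K$: that group preserves root lengths, and an irreducible summand can contain roots of different lengths (e.g.\ $\mathfrak{m}_1$ of $F_4/SU(3)\times SU(2)\times U(1)$ in Table \ref{coordF4} contains both the long root $\alpha_2$ and short roots such as $\alpha_2+\alpha_3$). The correct mechanism is Lie-algebraic rather than Weyl-group-theoretic: $\mathfrak{m}_i^{\mathbb{C}}$ splits as $\mathfrak{m}_i^{+}\oplus\mathfrak{m}_i^{-}$ with $\mathfrak{m}_i^{\pm}=\sum_{\alpha\in\pm(R_i\cap\Pi_M^{+})}\mathfrak{g}_\alpha$, these two pieces are non-isomorphic irreducible $\mathfrak{k}^{\mathbb{C}}$-modules (non-isomorphic because they have opposite weights for the central torus of $K$; irreducible because $\mathfrak{m}_i$ is real-irreducible of complex type), and complex Schur then forces $J$ to act by a single scalar $\pm\sqrt{-1}$ on each. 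Equivalently and more concretely, if $\alpha,\beta\in R_i\cap\Pi_M^{+}$ with $\beta=\alpha+\gamma$ for a root $\gamma$ of $\mathfrak{k}$, equivariance under $\ad(X_\gamma)$ forces $\varepsilon_\alpha=\varepsilon_\beta$, and irreducibility of $\mathfrak{m}_i^{+}$ connects all of $R_i\cap\Pi_M^{+}$ by such moves. With that substitution your argument is complete for flag manifolds; note that in the generality in which the proposition is stated one must also know each summand is of complex (not quaternionic) type to get exactly two choices per block, which the root-space description supplies automatically here.
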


\section{Scalar Curvatures of invariant almost Hermitian structures }\label{scal-hermit}

\subsection{Review: general results about curvatures of almost Hermitian structures} 
Let $(M,J,g)$ be an almost Hermitian manifold with real dimension $2n$, with $J$ being an almost complex structure orthogonal with respect to the Riemannian metric $g$. A linear connection  $\nabla$ on  $M$ is Hermitian if it preserves the metric $g$ and the almost complex structure $J$, that is, $\nabla g=0$ and $\nabla J=0$ (we are not assuming that $J$ is integrable).

Lets us recall the $1$-parameter family of Hermitian connection defined by  Gauduchon in \cite{gau} as follow:
\begin{eqnarray*}
g(\nabla^{t}_{X}Z,Y)&=&g(D_{X}Z,Y)-\frac{1}{2}g(J(D_{X}J)Z,Y)\\
&+&\frac{t}{4}g((D_{JY}J+JD_{Y}J)X,Z)-\frac{t}{4}g((D_{JZ}J+JD_{Z}J)X,Y),
\end{eqnarray*}
where $D$ is the Levi-Civita connection.

There are three special cases: 

\begin{itemize}

\item[i)]$t=0$,  $\nabla^{0}$ is the first canonical Hermitian connection, also known as Lichnerowicz connection or minimal connection. 

\item[ii)] $t=1$, $\nabla^{1}$ is the second canonical Hermitian connection, also known as Chern connection (this connection was used by Chern in the integrable case.

\item[iii)] $t=-1$, $\nabla^{-1}$ is the Bismut connection. In the integrable case, $\nabla^{-1}$ is characterized by its anti-symmetric torsion.

\end{itemize}

According to \cite{fu}, let us define the Hermitian scalar curvatures $s_{1}(t)$ and $s_{2}(t)$ by 
\begin{equation}
s_{1}(t)=R^{t}(u_{\overline{i}},u_{i},u_{j},u_{\overline{j}})
\end{equation}
\begin{equation}
s_{2}(t)=R^{t}(u_{\overline{i}},u_{j},u_{i},u_{\overline{j}})
\end{equation}
where  $R^{t}$ is the curvature tensor associated to $\nabla^{t}$ and $\{u_{i}\}_{i=1,2,\cdots,n}$ is a unitary frame. We call $s_{1}(t)$ and $s_{2}(t)$ the {\em first} and {\em second} Hermitian scalar curvatures, respectively.

For an Hermitian manifold equipped with the Chern connection $\nabla^{1}$ or the Bismut connection $\nabla^{-1}$, the relations between the Hermitian scalar curvatures and Riemannian scalar curvatures were widely studied, see for instance \cite{gau2} e \cite{liu}.

The Nijenhuis tensor $N$ is given by 
$$
N(X,Y)=-[JX,JY]+J[JX,Y]+J[X,JY]+[X,Y], 
$$
where $X,Y\in \Gamma(TM)$.

Let us consider the fundamental (or K\"ahler) 2-form $$F(X,Y)=g(JX,Y).$$

The Lee form $\alpha_{F}$ of $(M,J,g)$ is defined by $$\alpha_{F}=J\delta F,$$ where $\delta=-*d*$ is the codifferential with respect to $g$. The Lee form is also defined by 
$$
dF=(dF)_{0}+\dfrac{1}{n-1}\alpha_{F}\wedge F,
$$
where $(dF)_{0}$ is the primitive part of $dF$.

The covariant derivative of $F$ with respect to the Levi-Civita connection $D$ is
$$
(DF)(X,Y,Z)=\dfrac{1}{2}\left[dF(X,Y,Z)-dF(X,JY,JZ)-N(JX,Y,Z)\right].
$$
Moreover,
$$
(DF)(X,Y,Z)=-(DF)(X,Z,Y)=-(DF)(X,JY,JZ).
$$

The following expression of $DF$ will be very useful for our purposes (see \cite{gau}, \cite{fu}).
\begin{equation*}
(DF)(X,Y,Z)=(dF)^{-}(X,Y,Z)-\dfrac{1}{2}N(JX,Y,Z)+\dfrac{1}{2}\left[(dF)^{+}(X,Y,Z)-(dF)^{+}(X,JY,JZ)\right],
\end{equation*}

where 
$(dF)^{+}$ is the $(1,2)+(2,1)$-part of $dF$ and $(dF)^{-}$ is the $(0,3)+(3,0)$-part of $dF$.

Consider $N^{0}=N-\mathfrak{b}N$, where $\mathfrak{b}$ is the Bianchi projector, $\mathfrak{b}N^{0}=0$ and $\mathfrak{b}N$ is the anti-symmetric part of $N$ defined by  
$$
\mathfrak{b}N(X,Y,Z)=\dfrac{1}{3}\left[N(X,Y,Z)+N(Y,Z,X)+N(Z,X,Y)\right].
$$
According to \cite{gau}, we have $$
3\mathfrak{b}N(X,Y,Z)=(d^{c}F)^{-}(X,Y,Z)=(dF)^{-}(JX,JY,JZ).
$$

The four components described above $(dF)^{-}, N^{0}, (dF)^{+}_{0}$ and $\alpha_{F}$ provide us several geometric information about an almost Hermitian manifold. The $16$ classes of almost Hermitian structures described by Gray-Hervella in \cite{sub} correspond to the vanishing of the some subset  of $$
\{(dF)^{-}, N^{0}, (dF)^{+}_{0},\alpha_{F}\}.
$$

Let us describe some remarkable classes of almost Hermitian structures: 
\begin{itemize}

\item $\{0\}=$ {\em K\"ahler manifolds}: all components vanish, $(dF)^{-}= N^{0}=(dF)^{+}_{0}=\alpha_{F}=0$.

\item $\mathcal{W}_{1}=$ {\em nearly-K\"ahler manifolds}: $N^{0} = (dF)^{+}_{0}= \alpha_{F}=0$.

\item $\mathcal{W}_{1}\oplus \mathcal{W}_{2}=$ {\em $(1,2)$-symplectic manifolds} (or {\it quas}i-K\"ahler): $(dF)^{+}_{0}=\alpha_{F}=0$. 

\item $\mathcal{W}_{1}\oplus \mathcal{W}_{2}\oplus \mathcal{W}_{3}=$ {\em cosymplectic manifolds}: $\alpha_{F}=0$.

\end{itemize}

The Hermitian metric $g$ induces a natural inner product on $\wedge^{k}M$, the bundle of  real $k$-forms, and also on $TM\otimes \wedge^{k}M$ the bundle of $TM$-valuated $k$-forms. The norm of the covariant derivative of the K\"ahler form is given by:
\begin{eqnarray}
\|DF\|^{2}&=&\|dF\|^{2}+\dfrac{1}{4}\|N^{0}\|^{2}-\dfrac{2}{3}\|(dF)^{-}\|^{2}\\
&=&\|(dF)^{+}\|^{2}+\dfrac{1}{4}\|N^{0}\|^{2}+\dfrac{1}{3}\|(dF)^{-}\|^{2}.
\end{eqnarray}
In particular, if $J$ is integrable, then $\|DF\|^{2}=\|dF\|^{2}$.

\begin{theorem}[Theorem 4.3, \cite{fu}]\label{teo1} 
Let $(M,g,J)$ be an almost Hermitian manifold of real dimension $2n$. Then 
\begin{eqnarray*}
s_{1}(t)&=&\dfrac{s}{2}-\dfrac{5}{12}\|(dF)^{-}\|^{2}+\dfrac{1}{16}\|N^{0}\|^{2}+\dfrac{1}{4}\|(dF)_{0}^{+}\|^{2}\\
&+&\left[\dfrac{1}{4(n-1)}+\dfrac{t-1}{2}\right]\|\alpha_{F}\|^{2}+\dfrac{t-2}{2}\delta\alpha_{F}\\
&&\\
s_{2}(t)&=&\frac{s}{2}-\frac{1}{12}\|(dF)^{-}\|^{2}+\frac{1}{32}\|N^{0}\|^{2}-\dfrac{t^{2}-2t}{4}\|(dF)_{0}^{+}\|^{2}\\
&-&\left[\dfrac{t^{2}-2t}{4(n-1)}+\dfrac{(t+1)^{2}}{8}\right]\|\alpha_{F}\|^{2}-\dfrac{t+1}{2}\delta\alpha_{F},
\end{eqnarray*}
where $s$ denotes the Riemannian scalar curvature of $(M,g)$.
\end{theorem}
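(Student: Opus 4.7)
The plan is to express $\nabla^t$ as a perturbation $\nabla^t = D + A^t$ of the Levi-Civita connection, where $A^t$ is the $\mathrm{End}(TM)$-valued $1$-form read off from the defining formula
\begin{align*}
g(A^t(X)Z,Y) &= -\tfrac{1}{2}g(J(D_X J)Z,Y) \\
&\quad + \tfrac{t}{4}g((D_{JY}J+JD_Y J)X,Z) - \tfrac{t}{4}g((D_{JZ}J+JD_Z J)X,Y),
\end{align*}
and to exploit the standard identity
$$R^t(X,Y)Z = R^D(X,Y)Z + (D_X A^t)(Y,Z) - (D_Y A^t)(X,Z) + [A^t(X),A^t(Y)]Z.$$
Since the $t$-dependence of $A^t$ is affine in $t$, the $t$-dependence of $R^t$ is polynomial of degree at most two, which already predicts the shape $\mathrm{const} + t\cdot(\cdot) + t^2\cdot(\cdot)$ appearing on the right of the claimed formulas.

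Next I would trace with respect to a local unitary frame $\{u_i\}$. Splitting $A^t = A^0 + t\, B$, the contraction $R^t(u_{\bar i},u_i,u_j,u_{\bar j})$ yields three kinds of terms: a pure Riemannian piece producing $\tfrac{s}{2}$ (via the standard fact that tracing the Levi-Civita curvature in a $J$-adapted frame returns half the Riemannian scalar curvature), quadratic expressions in $A^0$ and $B$, and a divergence-type term coming from the derivative contractions $\sum D_{u_{\bar i}} A^t(u_i)$. The quadratic terms are rewritten in terms of $\|DF\|^2$ via the identity $DF(X,Y,Z) = \tfrac{1}{2}[dF(X,Y,Z)-dF(X,JY,JZ)-N(JX,Y,Z)]$, and then decomposed into the four Gray--Hervella summands using
$$\|DF\|^2 = \|(dF)^+\|^2 + \tfrac{1}{4}\|N^0\|^2 + \tfrac{1}{3}\|(dF)^-\|^2, \qquad \|(dF)^+\|^2 = \|(dF)^+_0\|^2 + \tfrac{2}{n-1}\|\alpha_F\|^2.$$
The divergence piece is converted to $\delta \alpha_F$ by observing that the $J$-trace of $DJ$ is (up to a normalization) the Lee form, so that the corresponding trace of $DA^t$ equals a scalar multiple of $\delta \alpha_F$.

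The two scalar curvatures $s_1(t)$ and $s_2(t)$ differ only by the order in which the contraction of $R^t$ is taken (the first pair versus crossed pairs of indices), and the Bianchi-type symmetries of $R^t$ -- which hold exactly up to explicit torsion corrections for Hermitian connections -- determine how each Gray--Hervella component enters with its numerical coefficient. Specifically, $(dF)^-$, $N^0$ and $(dF)^+_0$ sit in mutually orthogonal irreducible pieces of the $U(n)$-representation carrying $DF$, and one computes the coefficient of each by testing the formula on model structures in which only one Gray--Hervella component is nonzero (\emph{nearly-K\"ahler} for $(dF)^-$, strictly integrable non-K\"ahler Hermitian for $N = 0$ and $(dF)^+$, and locally conformal K\"ahler for $\alpha_F$).

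The main obstacle I foresee is the careful bookkeeping in Step 2: separating the quadratic and derivative contributions of $A^t$, and expressing each in an orthogonal decomposition of $DF$ without double-counting. In particular, the $t$-quadratic coefficient of $s_2(t)$ collects contributions both from $[B,B]$ and from $D B$, and one must verify that the $\alpha_F$-part of $B$ contributes not only $\tfrac{t^2-2t}{4(n-1)}\|\alpha_F\|^2$ but also adds to the $\|(dF)^+_0\|^2$ coefficient only through the orthogonal $(dF)^+_0$-part. Once the orthogonal decomposition and the identification of the divergence term with $\delta\alpha_F$ are in place, the stated formulas follow by matching coefficients.
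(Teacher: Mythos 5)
The first thing to note is that the paper does not prove this statement at all: it is quoted verbatim as Theorem 4.3 of \cite{fu} and used as a black box, so there is no in-paper argument to compare yours with, and your proposal has to stand on its own. The skeleton you describe --- writing $\nabla^{t}=D+A^{t}$ with $A^{t}$ affine in $t$, tracing the curvature comparison identity in a unitary frame, and sorting the resulting quadratic terms into the Gray--Hervella components of $DF$ --- is indeed the route taken by Gauduchon and by Fu--Zhou, so the overall strategy is sound.

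However, there is a concrete gap at the very first step of your bookkeeping. The ``standard fact'' you invoke, namely that $\sum_{i,j}R^{D}(u_{\bar i},u_i,u_j,u_{\bar j})$ equals $\tfrac{s}{2}$, is false for a general almost Hermitian manifold: contracting both antisymmetric index pairs of the Levi-Civita curvature against the K\"ahler form produces (one half of) the $*$-scalar curvature $s^{*}$, not $s$, and the discrepancy $s^{*}-s$ is itself a universal expression quadratic in $DJ$ plus a divergence term. That correction feeds into every coefficient of $\|(dF)^{-}\|^{2}$, $\|N^{0}\|^{2}$, $\|(dF)^{+}_{0}\|^{2}$ and $\|\alpha_{F}\|^{2}$ in the final formulas, so the computation as you describe it cannot close. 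Your fallback of fixing the coefficients by testing on model structures is also incomplete: nearly-K\"ahler isolates $(dF)^{-}$ and integrable non-K\"ahler structures isolate $(dF)^{+}_{0}$ and $\alpha_{F}$, but none of the models you list has $N^{0}\neq 0$ with the other components vanishing (you would need a strictly almost-K\"ahler, i.e.\ $\mathcal{W}_{2}$, example), and the method tacitly assumes that the coefficients are universal constants, which is part of what must be proved. Finally, the decisive computations --- the $s^{*}$-to-$s$ conversion, the identification of the trace of $DA^{t}$ with a specific multiple of $\delta\alpha_{F}$ together with its $t$-dependence, and the quadratic terms in $A^{0}$ and $B$ (where even your normalization $\|(dF)^{+}\|^{2}=\|(dF)^{+}_{0}\|^{2}+\tfrac{2}{n-1}\|\alpha_{F}\|^{2}$ needs checking against the convention $dF=(dF)_{0}+\tfrac{1}{n-1}\alpha_{F}\wedge F$) --- are all deferred, so what you have is a plausible outline rather than a proof.
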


Note that
\begin{eqnarray}
2s_{1}(t)-s&=&-\dfrac{5}{6}\|(dF)^{-}\|^{2}+\dfrac{1}{8}\|N^{0}\|^{2}+\dfrac{1}{2}\|(dF)_{0}^{+}\|^{2} \label{formula-curvatura}\\
&+&\left[\dfrac{1}{2(n-1)}+(t-1)\right]\|\alpha_{F}\|^{2}+(t-2)\delta\alpha_{F} \nonumber
\end{eqnarray}

\subsection{Curvature of invariant almost Hermitian structures on flag manifolds}

Let us consider a flag manifold $G/K$. We will consider on $G/K$ an invariant metric $g$ and an invariant almost complex structure $J$.  Recall the notation introduced in the Section \ref{sec:prelim}: we will represent an invariant metric $g$ by an $n$-tuple $(\lambda_1,\ldots, \lambda_n)$, where $n$ is the number of irreducible components of the isotropy representation of $\mathfrak{m}=T_o(G/K)$, that is, $\mathfrak{m}=\mathfrak{m}_1\oplus \ldots \oplus \mathfrak{m}_n$. It is worth to point out that if $\{ X_\alpha \}$ is a basis of $\mathfrak{m}$ induced by the Weyl basis then $X_\alpha$ is an eigenvector of the operator $\Lambda$ associated to the metric $g$, with same eigenvalue. This means that every vector in the irreducible component $\mathfrak{m}_i$ has length  $\lambda_i$. The invariant almost complex structure $J$ is parameterized by a set of sign $\{ \varepsilon_\alpha \} =\pm 1 $, with $\varepsilon_{-\alpha}=-\varepsilon_\alpha$, where $\alpha$ is in the set of roots $\Pi$. Each vector $X_\alpha$ is an eigenvector of $J$ with eigenvalue $\varepsilon_\alpha\sqrt{-1}$ and every vector in an irreducible component $\mathfrak{m}_i$ is associated to the same $\varepsilon_i$. 

We will write the Nijenhuis tensor in a similar way as \cite{sn}. We have $N=0$, except in the following situation:
\begin{equation}
g(N(X_{\alpha},X_{\beta}),X_{\gamma}))=-\lambda_{\gamma}m_{\alpha,\beta}(\epsilon_{\alpha}\epsilon_{\beta}+\epsilon_{\alpha}\epsilon_{\gamma}+\epsilon_{\beta}\epsilon_{\gamma}+1),
\end{equation}
where $\alpha+\beta+\gamma=0$.

We also have
\begin{equation}
g((N(X_{\alpha},X_{\beta}),J X_{\gamma})_=-\sqrt{-1}\lambda_{\gamma}m_{\alpha,\beta}(\epsilon_{\alpha}\epsilon_{\beta}\epsilon_{\gamma}+\epsilon_{\alpha}+\epsilon_{\beta}+\epsilon_{\gamma}),
\end{equation}
where $\alpha+\beta+\gamma=0$.

The next result was initially proved by San Martin-Negreiros \cite{sn} in the case of full flag manifolds and by R. de Jesus in her PhD. Thesis for the case of generalized flag manifold (see also \cite{rita}). We include the proof here for the convenience of the reader.   
\begin{lema}\label{lema-sm-neg}
Every invariant almost Hermitian structure $(g,J)$ on a  generalized flag manifolds is cosymplectic. 
\end{lema}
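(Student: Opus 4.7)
The plan is to show that the Lee form $\alpha_{F}$ of any invariant almost Hermitian structure $(g,J)$ on a generalized flag manifold $G/K$ vanishes identically, which by the classification recalled just before the statement (Gray--Hervella class $\mathcal{W}_{1}\oplus\mathcal{W}_{2}\oplus\mathcal{W}_{3}$) is exactly the cosymplectic condition.

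First I would observe that invariance is preserved along the whole chain of operations that define $\alpha_{F}$. Since both $g$ and $J$ are $G$-invariant, so is the K\"ahler form $F(X,Y)=g(JX,Y)$; hence $dF$ is $G$-invariant, and because the codifferential $\delta=-*d*$ is built only from $g$ and $d$, the $1$-form $\delta F$ is $G$-invariant as well. Applying the invariant endomorphism $J$ we conclude that $\alpha_{F}=J\delta F$ is a $G$-invariant $1$-form on $G/K$.

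Second, I would translate the vanishing of a $G$-invariant $1$-form into a purely algebraic statement at the origin. A $G$-invariant $1$-form on $G/K$ is completely determined by its value at the trivial coset $x_{0}$, which must lie in $(\mathfrak{m}^{*})^{K}$, equivalently (via the Cartan--Killing duality) in $\mathfrak{m}^{K}$, the subspace of $\Ad(K)$-fixed vectors of $\mathfrak{m}$. So it suffices to prove $\mathfrak{m}^{K}=\{0\}$ for a generalized flag manifold.

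The third and main step is the verification that $\mathfrak{m}^{K}=\{0\}$. The isotropy subgroup $K=K_{\Theta}$ is the centralizer of a torus and therefore contains the maximal torus $T$ with Lie algebra $\mathfrak{h}\cap\mathfrak{g}$. Under $\Ad(T)$ the complexified tangent space decomposes as $\mathfrak{m}^{\mathbb{C}}=\bigoplus_{\beta\in\Pi_{M}}\mathfrak{g}_{\beta}$, where each $\mathfrak{g}_{\beta}$ is the weight space of the (non-zero) weight $\beta\in\Pi_{M}=\Pi\setminus\langle\Theta\rangle$; consequently $T$ has no non-zero fixed vector in $\mathfrak{m}$, and a fortiori $K\supset T$ does not either. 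Hence $\mathfrak{m}^{K}=\{0\}$, so $\alpha_{F}\equiv 0$ and $(g,J)$ is cosymplectic.

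I do not anticipate a real obstacle: the argument is structural rather than computational, and the only thing to be careful about is the clean identification of invariant $1$-forms on $G/K$ with $\Ad(K)$-invariants in $\mathfrak{m}^{*}$, together with the observation that the maximal-rank nature of $K_{\Theta}$ already kills all such invariants because every root in $\Pi_{M}$ is non-trivial on $\mathfrak{h}$. No case analysis in $\Theta$, the number of isotropy summands, or the choice of $J$ is needed.
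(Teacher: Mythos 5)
Your proposal is correct, but it follows a genuinely different route from the paper. The paper proves the lemma by direct computation: it writes the Lee form as the contraction $\alpha_{F}(X)=\tfrac{1}{2n-1}\sum_{i}dF(X,X_{i},Y_{i})$ over the Weyl basis $\{A_{\alpha},\sqrt{-1}S_{\alpha}\}$ and observes that every term reduces to $dF(X_{\beta},X_{\alpha},X_{\alpha})$ or $dF(X_{\beta},X_{\alpha},X_{-\alpha})$, which vanish because $dF(X_{\alpha},X_{\beta},X_{\gamma})\neq 0$ forces $\alpha+\beta+\gamma=0$, and neither $\beta=-2\alpha$ nor $\beta=0$ can hold for roots. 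Your argument instead is structural: $\alpha_{F}$ is a $G$-invariant $1$-form, invariant $1$-forms on a reductive homogeneous space are the $\Ad(K)$-fixed vectors of $\mathfrak{m}^{*}\cong\mathfrak{m}$, and since $K_{\Theta}$ contains a maximal torus while $\mathfrak{m}^{\mathbb{C}}=\bigoplus_{\beta\in\Pi_{M}}\mathfrak{g}_{\beta}$ carries only nonzero weights, there are no such fixed vectors. Your version is cleaner and proves more (every invariant $1$-form on a generalized flag manifold vanishes, for any invariant tensorial construction, with no case analysis); its only cost is that it sits outside the explicit Weyl-basis machinery that the paper sets up here and reuses in all subsequent curvature computations, whereas the paper's term-by-term cancellation doubles as a warm-up for those calculations. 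Each step of your argument (invariance of $\delta$ under isometries, the identification of invariant forms with isotropy invariants, and $\mathfrak{m}^{K}=\{0\}$ from the maximal-rank property of $K_{\Theta}$) is sound.
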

\begin{proof}
%This is characterized by the vanishing of the Lee form, that is, $\alpha_{F}=0$.
An almost Hermitian structure is cosymplectic if and only if
$$
\alpha_{F}(X)=\dfrac{1}{2n-1}\sum_{i}dF(X,X_{i},Y_{i})=0,
$$
where $\{X_{i}\}$ is a basis of the tangent space, $\{Y_{i}\}$ is a basis of the dual space with relation a nondegenerate form $F$. We take $\{X_{i}\}=\{A_{\alpha}, \sqrt{-1}S_{\alpha};\alpha\in \Pi^{+}_M \}$ and $\{Y_{i}\}=\{\sqrt{-1}S_{\alpha}, A_{\alpha};\alpha\in \Pi^{+}_M \} $. So,
\begin{eqnarray*}
dF(X,X_{\alpha}-X_{-\alpha},iX_{\alpha}+iX_{-\alpha})&=&idF(X,X_{\alpha},X_{\alpha})+idF(X,X_{\alpha},X_{-\alpha})\\
&-&idF(X,X_{-\alpha},X_{\alpha})-idF(X,X_{-\alpha},X_{-\alpha})\\
&=&2i(dF(X,X_{\alpha},X_{\alpha})+dF(X,X_{\alpha},X_{-\alpha})).
\end{eqnarray*}
We know that $dF(X_{\alpha},X_{\beta}, X_{\gamma})=0$ unless $\alpha+\beta+\gamma=0$.
Thus, we take $X=X_{\beta}$ and
$$
\left\{
\begin{array}{l}
dF(X_{\beta},X_{\alpha}, X_{\alpha})=0,\ \mbox{ because if} \ \alpha+\beta+\alpha=0 \ \mbox{then} \ \beta=-2\alpha.\\  
dF(X_{\beta},X_{\alpha}, X_{-\alpha})=0,\ \mbox{because if} \ \alpha+\beta-\alpha=0 \ \mbox{then} \ \beta=0
\end{array}
\right\},
$$
and it is a contradiction. 
Therefore, for every root $\gamma$,
$$
\alpha_{F}(X)=\dfrac{1}{2n-1}\sum_{\alpha>0}dF(X_{\gamma},X_{\alpha}-X_{-\alpha},iX_{\alpha}+iX_{-\alpha})=0.
$$
\end{proof}
\begin{obs}
1. According to Lemma \ref{lema-sm-neg}, the Lee form vanishes identically for every generalized flag manifold. Therefore we have $dF=(dF)_{0}$ and hence $(dF)^{+}=(dF)_{0}^{+}$. From now on, we will use $(dF)^{+}$ instead of $(dF)_{0}^{+}$.

2. We have $\delta \alpha_{F}=0$, where $\delta$  is the codifferential. In this case $s_{1}(t)$ is independent of $t$ (see Theorem \ref{teo1}), and we will denote $s_{1}(t)$ simply by $s_{1}$. In this case, $s_1$ coincides with the Chern scalar curvature $s_C$ %\info{verificar se esta bem escrito, principalmentea relacao entre %$s_C$ e $s_1$}.

3. In Theorem \ref{teo1} we will omit the terms which $\alpha_{F}$ appears. 
\end{obs}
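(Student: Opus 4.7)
The plan is to derive all the claims in the remark from the single fact, supplied by Lemma~\ref{lema-sm-neg}, that every invariant almost Hermitian structure $(g,J)$ on a generalized flag manifold is cosymplectic, i.e.\ $\alpha_F\equiv 0$.

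First, from the defining identity $dF=(dF)_0+\tfrac{1}{n-1}\alpha_F\wedge F$ recalled earlier in this section, substituting $\alpha_F=0$ yields $dF=(dF)_0$ at once. To pass from here to the finer statement $(dF)^+=(dF)_0^+$, I would split both sides into their $(3,0)+(0,3)$ and $(1,2)+(2,1)$ components. Since $F$ is of type $(1,1)$ and $\alpha_F$ is a real $1$-form, the correction $\alpha_F\wedge F$ is of pure type $(1,2)+(2,1)$; hence the $(3,0)+(0,3)$ parts of $dF$ and $(dF)_0$ agree unconditionally, while their $(1,2)+(2,1)$ parts agree precisely when $\alpha_F=0$. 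This yields $(dF)^+=(dF)_0^+$ and justifies the subsequent change of notation.

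Second, $\delta\alpha_F=0$ is automatic once $\alpha_F=0$. Plugging $\alpha_F=0$ and $\delta\alpha_F=0$ into the formula for $s_1(t)$ in Theorem~\ref{teo1} annihilates both $t$-dependent summands $\bigl[\tfrac{1}{4(n-1)}+\tfrac{t-1}{2}\bigr]\|\alpha_F\|^2$ and $\tfrac{t-2}{2}\delta\alpha_F$; the surviving expression depends only on $s$, $\|(dF)^-\|^2$, $\|N^0\|^2$ and $\|(dF)_0^+\|^2$, and in particular is independent of $t$, so I may write $s_1$ without ambiguity. Specialising to $t=1$, where $\nabla^1$ is the Chern connection, the quantity $R^1(u_{\bar i},u_i,u_j,u_{\bar j})$ is by definition the Chern scalar curvature, giving $s_1=s_C$. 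The third assertion---that terms involving $\alpha_F$ are to be omitted from Theorem~\ref{teo1}---is then merely a notational convention recording that such terms vanish identically on any generalized flag manifold.

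There is no substantive obstacle here: the remark is a bookkeeping consequence of the cosymplectic property already established. The only mildly non-trivial step is the type-decomposition argument behind $(dF)^+=(dF)_0^+$, which is where I would want to be careful about the bidegree of $\alpha_F\wedge F$.
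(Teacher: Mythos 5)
Your derivation is correct and is exactly the (implicit) justification the paper intends: the remark is stated as an immediate consequence of Lemma~\ref{lema-sm-neg}, and your chain $\alpha_F\equiv 0 \Rightarrow dF=(dF)_0$, $(dF)^+=(dF)_0^+$ (via the bidegree $(1,2)+(2,1)$ of $\alpha_F\wedge F$), $\delta\alpha_F=0$, hence $t$-independence of $s_1(t)$ and $s_1=s_1(1)=s_C$, is the same route. No gaps.
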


The next definition was introduced in \cite{sn} in order to obtain the classification of invariant almost Hermitian structures on full flag manifolds, and we will use this concept widely.

\begin{definition}
Let $J$ be an invariant almost complex structure on the flag manifold $G/K$. The triple of roots $\alpha, \beta, \gamma\in \Pi^+_M$, with  $\alpha+\beta+\gamma=0$ is said to be a $(0,3)$-triple if $\varepsilon_\alpha=\varepsilon_\beta=\varepsilon_\gamma$. It is a $(1,2)$-triple otherwise.
\end{definition}

In order to calculate the Hermitian scalar curvatures, the first step is computing the norms of $N^{0}$, $(dF)^{-}$, $(dF)^{+}$ e $\alpha_{F}$, as described in \cite{fu}. Since these computations are standard we omit the details.

\begin{proposition}[\cite{nosso2}] \label{prop-norma}
Let $(g,J)$ be an invariant almost Hermitian structure in the generalized flag manifold $G/K$. We have
\begin{eqnarray*}
\|N^{0}\|^{2}&=&\sum_{\alpha+\beta+\gamma=0}\left(m_{\alpha,\beta}\right)^{2}\dfrac{(\epsilon_{\alpha}\epsilon_{\beta}\epsilon_{\gamma}+\epsilon_{\alpha}+\epsilon_{\beta}+\epsilon_{\gamma})^2}{54\lambda_{\alpha}\lambda_{\beta}\lambda_{\gamma}}[(-2\lambda_{\alpha}+\lambda_{\beta}+\lambda_{\gamma})^{2}\\
& &+ (-2\lambda_{\gamma}+\lambda_{\alpha}+\lambda_{\beta})^{2}+(-2\lambda_{\beta}+\lambda_{\alpha}+\lambda_{\gamma})^{2}], \\ \\
%\end{eqnarray*}
%\begin{eqnarray*}
\|(dF)^{-}\|^{2}&=&\sum_{\alpha+\beta+\gamma=0}\dfrac{\left(m_{\alpha,\beta}\right)^{2} (\epsilon_{\alpha}\epsilon_{\beta}\epsilon_{\gamma}+\epsilon_{\alpha}+\epsilon_{\beta}+\epsilon_{\gamma}  )^2\left(\lambda_{\alpha}+\lambda_{\beta}+\lambda_{\gamma}\right)^{2}}{96\lambda_{\alpha}\lambda_{\beta}\lambda_{\gamma}}, \\ \\
%\end{eqnarray*}
%\begin{eqnarray*}
\|DF\|^{2}&=&\sum_{\alpha+\beta+\gamma=0}\dfrac{m_{\alpha,\beta}^{2}}{3\lambda_{\alpha}\lambda_{\beta}\lambda_{\gamma}}\left[\dfrac{1}{4}(\varepsilon_{\beta}+\varepsilon_{\gamma})^{2}(-\lambda_{\alpha}+\lambda_{\beta}+\lambda_{\gamma})^{2}\right.\\
& & + \dfrac{1}{4}(\varepsilon_{\alpha}+\varepsilon_{\gamma})^{2}(\lambda_{\alpha}-\lambda_{\beta}+\lambda_{\gamma})^{2}
+\left.\dfrac{1}{4}(\varepsilon_{\alpha}+\varepsilon_{\beta})^{2}(\lambda_{\alpha}+\lambda_{\beta}-\lambda_{\gamma})^{2}\right], \\ \\
\|(dF)^{+}\|^{2}&=&\sum_{\alpha+\beta+\gamma=0}\dfrac{m_{\alpha,\beta}^{2}\{4(\varepsilon_{\alpha}\lambda_{\alpha}+\varepsilon_{\beta}\lambda_{\beta}+\varepsilon_{\gamma}\lambda_{\gamma})-(\varepsilon_{\alpha}+\varepsilon_{\beta}+\varepsilon_{\gamma}+\varepsilon_{\alpha}\varepsilon_{\beta}\varepsilon_{\gamma})(\lambda_{\alpha}+\lambda_{\beta}+\lambda_{\gamma})\}}{96\lambda_{\alpha}\lambda_{\beta}\lambda_{\gamma}}^{2}.
\end{eqnarray*}

\end{proposition}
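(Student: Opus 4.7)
The plan is to compute the four norms by evaluating the underlying tensors on the Weyl basis $\{X_\alpha\}_{\alpha\in\Pi_M}$ of $\mathfrak{m}^{\mathbb{C}}$, which simultaneously diagonalises the metric operator $\Lambda$ (with eigenvalue $\lambda_\alpha$) and the invariant almost complex structure $J$ (with eigenvalue $\varepsilon_\alpha\sqrt{-1}$), and is normalised by the Cartan-Killing relation $(X_\alpha,X_{-\alpha})=1$, so that $g(X_\alpha,X_{-\alpha})=\lambda_\alpha$. Since every triple evaluation $dF(X_\alpha,X_\beta,X_\gamma)$, $N(X_\alpha,X_\beta)(X_\gamma)$, etc.\ is nonzero only when $\alpha+\beta+\gamma=0$, each norm reduces to a weighted sum over such triples, with weights $1/(\lambda_\alpha\lambda_\beta\lambda_\gamma)$ coming from the dual basis $X^\alpha=X_{-\alpha}/\lambda_\alpha$ and a symmetry factor $1/k!$ from unordering $k$-tuples.

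First I would compute $dF$ on the Weyl basis. From $F(X,Y)=g(JX,Y)$ one reads $F(X_\alpha,X_{-\alpha})=\varepsilon_\alpha\sqrt{-1}\,\lambda_\alpha$, and the Koszul formula for left-invariant fields at the origin,
\[
dF(X,Y,Z)=-F([X,Y]_{\mathfrak{m}},Z)-F([Y,Z]_{\mathfrak{m}},X)-F([Z,X]_{\mathfrak{m}},Y),
\]
combined with $[X_\alpha,X_\beta]=m_{\alpha,\beta}X_{\alpha+\beta}$ and the Weyl-basis identity $m_{\alpha,\beta}=m_{\beta,\gamma}=m_{\gamma,\alpha}$ valid whenever $\alpha+\beta+\gamma=0$, yields
\[
dF(X_\alpha,X_\beta,X_\gamma)=\sqrt{-1}\,m_{\alpha,\beta}\bigl(\varepsilon_\alpha\lambda_\alpha+\varepsilon_\beta\lambda_\beta+\varepsilon_\gamma\lambda_\gamma\bigr).
\]
Since $X_\alpha$ is of type $(1,0)$ iff $\varepsilon_\alpha=+1$, the $(0,3)+(3,0)$-piece $(dF)^-$ lives precisely on $(0,3)$-triples and $(dF)^+$ on the mixed ones; the $\varepsilon$-combination $(\varepsilon_\alpha\varepsilon_\beta\varepsilon_\gamma+\varepsilon_\alpha+\varepsilon_\beta+\varepsilon_\gamma)^2$ and the scalar inside the braces of the proposition are, up to overall factors, the indicator functions and residual coefficients of these two projections.

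Next, for $N^0$ I would use the identity $3\mathfrak{b}N(X,Y,Z)=(dF)^-(JX,JY,JZ)$ quoted from \cite{gau} to subtract the Bianchi part from the two explicit formulas for $g(N(X_\alpha,X_\beta),X_\gamma)$ and $g(N(X_\alpha,X_\beta),JX_\gamma)$ already displayed in the text. The remainder $N^0$ is supported only on $(0,3)$-triples and splits into three cyclic terms proportional to $(-2\lambda_\alpha+\lambda_\beta+\lambda_\gamma)$ and its cyclic permutations, matching the sum of squares appearing in $\|N^0\|^2$. For $\|DF\|^2$ I would substitute the Weyl-basis values of $dF$ and $N$ into
\[
(DF)(X,Y,Z)=\tfrac{1}{2}\bigl[dF(X,Y,Z)-dF(X,JY,JZ)-N(JX,Y,Z)\bigr],
\]
which produces the three cyclic $(\varepsilon_\beta+\varepsilon_\gamma)^2(-\lambda_\alpha+\lambda_\beta+\lambda_\gamma)^2$-type terms of the third formula.

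The principal obstacle is combinatorial book-keeping. One has to keep careful track of (i) the $6$-fold overcounting of unordered triples when the sum runs over all ordered $(\alpha,\beta,\gamma)$ with $\alpha+\beta+\gamma=0$, (ii) the reality relations $\varepsilon_{-\alpha}=-\varepsilon_\alpha$, $\lambda_{-\alpha}=\lambda_\alpha$, $m_{-\alpha,-\beta}=-m_{\alpha,\beta}$ that pair each triple with its negative, and (iii) the combined weight $1/(k!\,\lambda_{\alpha_1}\cdots\lambda_{\alpha_k})$ for a $k$-form on this non-orthonormal basis. Pinning down these numerical factors is what produces the global prefactors $1/96$, $1/54$ and $1/3$ appearing in the proposition; once they are in hand, the four identities follow by rearranging the pointwise computations sketched above.
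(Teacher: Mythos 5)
Your plan is correct and follows exactly the route the paper itself has in mind: the paper omits the proof as ``standard'' (citing \cite{nosso2}), but the ingredients it sets up in Section~\ref{scal-hermit} --- the Weyl-basis evaluation of $N$, the Gauduchon identity $3\mathfrak{b}N(X,Y,Z)=(dF)^{-}(JX,JY,JZ)$, and the expression of $DF$ in terms of $dF$ and $N$ --- are precisely the ones you invoke, and your formula $dF(X_\alpha,X_\beta,X_\gamma)=\sqrt{-1}\,m_{\alpha,\beta}(\varepsilon_\alpha\lambda_\alpha+\varepsilon_\beta\lambda_\beta+\varepsilon_\gamma\lambda_\gamma)$ together with the identification of $(\varepsilon_\alpha\varepsilon_\beta\varepsilon_\gamma+\varepsilon_\alpha+\varepsilon_\beta+\varepsilon_\gamma)$ as (four times) the indicator of $(0,3)$-triples is the correct core of the computation. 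The only part left implicit is the combinatorial normalization producing the constants $1/96$, $1/54$, $1/3$, which you correctly isolate as pure bookkeeping over the non-orthonormal basis $\{X_\alpha/\sqrt{\lambda_\alpha}\}$.
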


\section{Flag manifolds with two isotropy summands}\label{sec:flag2somandos}

In this section we will discuss the equation $2s_{1}-s=0$ for generalized flag manifolds $G/K$ whose the isotropy representation splits into two irreducible and non-equivalent components, namely $\mathfrak{m}=\mathfrak{m}_{1}\oplus\mathfrak{m}_{2}$.
The classification of this family of homogeneous space is given as follows: recall the {\em height} of a simple root $\alpha_p$ is the integer that appear as its coefficient in the highest root, that is, if $\mu=c_1{\alpha_1}+\ldots + c_p\alpha_p+\ldots +c_n\alpha_n$ is the highest root then $\htt (\alpha_p)=c_p$. A generalized flag manifold has two istropy components iff it is parameterized by the set $\Theta\subset\Sigma$ such that $\Theta=\Sigma \setminus\{ \alpha_p\}$ with $\htt(\alpha_p)=2$ (see Section \ref{sec:prelim}).  This class of homogeneous spaces is listed in Table \ref{tab:two-summands} (see \cite{anas}).  

\begin{table}[h]
\caption{Flag manifolds with two isotropy summnads}
    \label{tab:two-summands}
    %\centering
    \begin{tabular}{l}
       $SO(2\ell+1)/U(\ell-m)\times SO(2m+1),\quad \ell>0, m\geq 0, \ell-m\neq 1$    \\ \hline
       $Sp(\ell)/U(\ell-m)\times Sp(m), \quad \ell >0, m>0$ \\ \hline
       $SO(2\ell)/U(\ell-m)\times SO(2m), \quad \ell >0, m>0, \ell-m\neq 1$ \\ \hline
       $G_2/U(2)$, $U(2)$ represented by the short root of $G_2$ \\ \hline
       $F_4/SO(7) \times U(1)$ \\ \hline
       $F_4/Sp(3) \times U(1)$ \\ \hline
       $E_6/SU(6) \times U(1)$ \\ \hline
       $E_6/SU(2) \times SU(5) \times U(1) $ \\ \hline
       $E_7/SU(7) \times U(1) $ \\ \hline
       $E_7/SU(2) \times SO(10) \times U (1)$ \\ \hline
       $E_7/SO(12) \times U(1)$ \\ \hline
       $E_8/E_7 \times U(1)$ \\ \hline
       $E_8/SO(14) \times U(1)$
    \end{tabular}
    
\end{table}

%\info{Falar quem é $\got{m}_{1}$ e $\got{m}_{2}$ na nossa notação}
%We have the following decomposition of tangent space: $\mathfrak{m}=\mathfrak{m}_{1}\oplus\mathfrak{m}_{2}$

%According to \cite{anas}, 
The isotropy summands $\got{m}_{1}$ and $\got{m}_{2}$ satisfy the following relations: 
$$
[\mathfrak{m}_{1},\mathfrak{m}_{1}]\subset \mathfrak{k}\oplus\mathfrak{m}_{2}, \ \ \ \ [\mathfrak{m}_{1},\mathfrak{m}_{2}]\subset \mathfrak{m}_{1} \ \ \ \ \mbox{and} \ \ \ \  [\mathfrak{m}_{2},\mathfrak{m}_{2}]\subset \mathfrak{k}.
$$

Note that the zero sum  triples can be parameterized by:
$$
\begin{array}{ll}
\alpha+\beta+\gamma=0 & \mbox{where}\ \alpha\in \mathfrak{m}_{1}, \ \beta\in\mathfrak{m}_{1} \ \mbox{and} \ \gamma\in\mathfrak{m}_{2}.
\end{array}
$$

For flag manifolds $G/K$ with two isotropy summands we have two invariant almost complex structures (up to conjugation), namely $J=(+,+)$ and $J_1=(+,-)$. Let us consider our analysis for $J$ and $J_1$ separately.

Consider complex (integrable) structure $J=(+,+)$. The (1,2)-triples are given by $\mathfrak{m}_{1}+\mathfrak{m}_{1}-\mathfrak{m}_{2}=0$ and there is no (0,3)-triples. We define $$L=\sum_{\substack{\{\delta,\sigma,\eta\} \ \rm{is} \\ (1,2)-\rm{triple}}}m_{\delta,\sigma}^{2},$$
where the real numbers $m_{\delta,\sigma}$ are the structural constants induced by the Weyl basis.

Let us consider an invariant metric $g$ parameterized by $g=(\lambda_1,\lambda_2)$. We get
$$\|(dF)^{-}\|^{2}=\|N^{0}\|^{2}=0, \quad \|(dF)^{+}\|^{2}=L\dfrac{(2\lambda_1-\lambda_2)^{2}}{6\lambda_1^{2}\lambda_2}.$$

If the metric $g$ satisfies $\lambda_2=2\lambda_1$ then $(J,g)$ is K\"ahler, otherwise $(J,g)\in\mathcal{W}_3$. The expression $2s_{1}-s$ reads:
\begin{eqnarray*}
2s_{1}-s&=&-\dfrac{5}{6}\|(dF)^{-}\|^{2}+\dfrac{1}{8}\|N^{0}\|^{2}+\dfrac{1}{2}\|(dF)^{+}\|^{2}\\
&=&L\dfrac{(2\lambda_1-\lambda_2)^{2}}{12\lambda_1^{2}\lambda_2}.
\end{eqnarray*}

Thus, $2s_{1}-s=0$ if, and only if, $\lambda_2=2\lambda_1$ and $(J,g)$ is K\"ahler. 
 
 \

Let us consider now the invariant almost complex structure $J_1=(+,-)$ (non-integrable). The (0,3)-triples are given by $\mathfrak{m}_{1}+\mathfrak{m}_{1}-\mathfrak{m}_{2}=0$, and there is no (1,2)-triples. We define
$$K=\sum_{\substack{\{\alpha,\beta,\gamma\}\ \rm{is} \\  (0,3)-\rm{triple}}}m_{\alpha,\beta}^{2},$$
where the real numbers $m_{\alpha,\beta}$ are the structural constants induced by the Weyl basis.

We have
$$\|(dF)^{+}\|^{2}=0, \quad\|(dF)^{-}\|^{2}=K\dfrac{(2\lambda_1+\lambda_2)^{2}}{6\lambda_1^{2}\lambda_2}, \quad \|N^{0}\|^{2}=8K\dfrac{2(-\lambda_1+\lambda_2)^{2}+(2\lambda_1-2\lambda_2)^{2}}{27\lambda_1^{2}\lambda_2}.$$

If the metric $g$ satisfies $\lambda_2=\lambda_1$ then $(J_1,g) \in \mathcal{W}_1$, otherwise $(J_1,g)\in\mathcal{W}_1\oplus \mathcal{W}_2$. The expression $2s_{1}-s$ reads:
\begin{eqnarray*}
2s_{1}-s&=&-\dfrac{5}{6}\|(dF)^{-}\|^{2}+\dfrac{1}{8}\|N^{0}\|^{2}+\dfrac{1}{2}\|(dF)^{+}\|^{2}\\
&=&-5K\dfrac{(2\lambda_1+\lambda_2)^2}{36\lambda_1^{2}\lambda_2}
+K\dfrac{(-2\lambda_1+\lambda_1+\lambda_2)^{2}+(\lambda_1-2\lambda_1+\lambda_2)^{2}+(\lambda_1+\lambda_1-2\lambda_2)^{2}}{27\lambda_1^{2}\lambda_2}\\
&=&\frac{K \left(-4 \lambda_1^2-12 \lambda_1 \lambda_2+\lambda_2^2\right)}{12 \lambda_1^2 \lambda_2}.
\end{eqnarray*}

Therefore, if $(J_1, g)\in\mathcal{W}_1$ there is not solution for equation $2s_1-s=0$. If $(J_1, g)\in\mathcal{W}_1\oplus \mathcal{W}_2$ then $\lambda_2=2\lambda_1( \sqrt{10}+3)$ is a solution of $2s_1-s=0$. 

Summing up we obtain the following result:
\begin{theorem}\label{teo-2somandos}
Let $G/K$ be a generalized flag manifold with two isotropy summands, and let $g=(\lambda_1,\lambda_2)$ be an invariant Riemannian metric. Then $2s_1-s=0$ holds for $G/K$ if the pair $(g,J_i)$ satisfies the following conditions: 

\begin{itemize}
    \item For the invariant complex struture $J=(+,+)$, the metric $g$ is parameterized by $\lambda_2=2\lambda_1$. In this case $(g,J)$ is K\"ahler.
    \item For the invariant almost complex structure $J_1=(+,-)$, the metric $g$ is parameterized by $\lambda_2=2\lambda_1( \sqrt{10}+3)$. In this case, $(g, J_1)\in\mathcal{W}_1\oplus \mathcal{W}_2$. 
\end{itemize}
\end{theorem}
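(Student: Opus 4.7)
The proof proceeds by a direct case-by-case computation using the formula
\[
2s_1 - s = -\tfrac{5}{6}\|(dF)^{-}\|^{2} + \tfrac{1}{8}\|N^{0}\|^{2} + \tfrac{1}{2}\|(dF)^{+}\|^{2},
\]
which is the specialization of (\ref{formula-curvatura}) to the flag setting, where Lemma \ref{lema-sm-neg} ensures $\alpha_F \equiv 0$. My plan is to exploit the very restricted structure of zero-sum root triples available in the two-summand case, combined with Proposition \ref{prop-norma}, to reduce $2s_1-s$ to a single rational function of $(\lambda_1,\lambda_2)$ for each invariant almost complex structure.

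First I would identify the zero-sum triples. The bracket relations $[\mathfrak{m}_1,\mathfrak{m}_1]\subset \mathfrak{k}\oplus \mathfrak{m}_2$, $[\mathfrak{m}_1,\mathfrak{m}_2]\subset \mathfrak{m}_1$, $[\mathfrak{m}_2,\mathfrak{m}_2]\subset \mathfrak{k}$ force the only possibility (up to sign permutations) to be $\alpha+\beta+\gamma=0$ with $\alpha,\beta$ positive roots mapping into $\mathfrak{m}_1$ and $-\gamma$ a positive root mapping into $\mathfrak{m}_2$. I would then classify these triples with respect to the two invariant almost complex structures on $G/K$ (up to conjugation, there are exactly $2^{2-1}=2$ of them by Proposition \ref{numest}). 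For $J=(+,+)$, the signs on such a triple are $(\varepsilon_\alpha,\varepsilon_\beta,\varepsilon_\gamma)=(+,+,-)$, hence every triple is a $(1,2)$-triple and no $(0,3)$-triples exist. For $J_1=(+,-)$, the signs are $(+,+,+)$, so every triple is a $(0,3)$-triple and no $(1,2)$-triples exist.

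Next I would apply Proposition \ref{prop-norma}. In the $J=(+,+)$ case the $(0,3)$ quantities vanish, giving $\|(dF)^{-}\|^{2}=\|N^{0}\|^{2}=0$, and a direct substitution in the $(dF)^{+}$ formula (using $\lambda_\alpha=\lambda_\beta=\lambda_1$, $\lambda_\gamma=\lambda_2$, and the factor $\varepsilon_\alpha\lambda_\alpha+\varepsilon_\beta\lambda_\beta+\varepsilon_\gamma\lambda_\gamma$ collapsing to $2\lambda_1-\lambda_2$) yields $\|(dF)^{+}\|^{2}=L(2\lambda_1-\lambda_2)^{2}/(6\lambda_1^{2}\lambda_2)$ with $L=\sum m_{\delta,\sigma}^{2}$ over $(1,2)$-triples. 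Then
\[
2s_1-s = \tfrac{L\,(2\lambda_1-\lambda_2)^{2}}{12\,\lambda_1^{2}\lambda_2},
\]
which vanishes iff $\lambda_2=2\lambda_1$, precisely the K\"ahler condition for $(g,J)$. Symmetrically, for $J_1=(+,-)$ the $(1,2)$ quantity vanishes and I substitute the common signs $(+,+,+)$ into the $(dF)^{-}$ and $N^{0}$ formulas to obtain expressions proportional to a constant $K=\sum m_{\alpha,\beta}^{2}$ over $(0,3)$-triples. Combining them yields a quadratic in $\lambda_2$ (with $\lambda_1$ as parameter), of the form $-4\lambda_1^{2}-12\lambda_1\lambda_2+\lambda_2^{2}$ up to the positive factor $K/(12\lambda_1^{2}\lambda_2)$; solving gives the positive root $\lambda_2=2\lambda_1(\sqrt{10}+3)$.

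The Gray--Hervella classification is then read off from which of $(dF)^{\pm}$ and $N^{0}$ vanish: for $J$ the non-K\"ahler locus $\lambda_2\ne 2\lambda_1$ has only $(dF)^{+}\ne 0$, so $(g,J)\in\mathcal{W}_3$, and $\lambda_2=2\lambda_1$ is K\"ahler; for $J_1$, since $N^{0}$ is nonzero whenever $K\ne 0$, the pair lies in $\mathcal{W}_1\oplus\mathcal{W}_2$ (with $\mathcal{W}_1$ only at $\lambda_2=\lambda_1$, which is not the Klsc solution). The only genuine difficulty is bookkeeping: keeping signs, factors of $\sqrt{-1}$, and symmetry conventions consistent with Proposition \ref{prop-norma}, so that the coefficient $L$ (resp.\ $K$) cancels and one obtains a clean quadratic whose positive root is exactly $2\lambda_1(\sqrt{10}+3)$.
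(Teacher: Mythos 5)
Your proposal is correct and follows essentially the same route as the paper: identify the unique family of zero-sum triples $\mathfrak{m}_1+\mathfrak{m}_1-\mathfrak{m}_2=0$, observe that they are all $(1,2)$-triples for $J=(+,+)$ and all $(0,3)$-triples for $J_1=(+,-)$, substitute into Proposition \ref{prop-norma} and formula (\ref{formula-curvatura}), and solve the resulting quadratic $\lambda_2^2-12\lambda_1\lambda_2-4\lambda_1^2=0$ for its positive root $\lambda_2=2\lambda_1(3+\sqrt{10})$. The sign bookkeeping and the Gray--Hervella conclusions also match the paper's argument.
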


\bigskip

%%%%%%%%%%%%%%%%%%%%%%%%%%%%%%%%%%%%%%%%%%%%%%%%%%%%%%%%%%%%%%%%%%%%%%%%%%%%%%%%%%%%%%%%%%%%%%%%%
\section{Flag manifolds with three isotropy summands}\label{sec:flag3somandos}

%\info{Falar quem é $\got{m}_{1}$, $\got{m}_{2}$ e $\got{m}_{3}$ na nossa notação}
Generalized flag manifolds whose the isotropy representation decompose into three pairwise
non-isomorphic irreducible components, i.e., $\mathfrak{m}=\mathfrak{m}_{1}\oplus\mathfrak{m}_{2}\oplus\mathfrak{m}_{3}$ are classified by Kimura in \cite{kimura} and described in terms of the choice of $\Theta\subset\Sigma$ in Table \ref{table:3compo-sigma}. It is worth to point out there are two sub-families of flags with three isotropy components, namely Type I and II, see Tables  \ref{table:3comp-tipo1} and \ref{table:3comp-tipo2}.

\begin{table}[!htb] % aqui começa o ambiente tabela

\centering

\caption{Types of flag manifolds with three summands}\label{table:3compo-sigma} % igual ao ambiente figura

\begin{tabular}{|c|c|} % com este comando dizemos quantas colunas terá nossa tabela e a posição do texto dentro de cada coluna. Aqui temos três colunas (pois são três "c" dentre {}) e o texto estará centralizado em todas elas (indicado pelo "c", se quisermos alinhados à esquerda "l" ou direita "r"

\hline % este comando coloca uma linha na tabela

Type & $\Theta \subset \Sigma$\\ % esta é a primeira linha de nossa tabela. O símbolo "&" separa as colunas e "\\" indica que aquela linha acabou.
\hline
\hline
$I$ & $\Theta=\Sigma\setminus\{\alpha_{p}:\htt(\alpha_{p})=3\}$\\
\hline
$II$ & $\Theta=\Sigma\setminus \{\alpha_{p},\alpha_{q} : \htt(\alpha_{p})=\htt(\alpha_{q})=1\}$\\
\hline
\end{tabular}
\label{tab2}
\end{table}

\begin{table}[!htb] \label{3som-tipo1}% aqui começa o ambiente tabela

\centering

\caption{The dimensions $d_{i}=\dim(\mathfrak{m}_{i})$ for any $M=G/K$ of type I} \label{table:3comp-tipo1} % igual ao ambiente figura

\begin{tabular}{|c|c|c|c|} 

\hline % este comando coloca uma linha na tabela

 Flag manifold $G/K$ & $d_{1}$ & $d_{2}$ &  $d_{3}$\\ % esta é a primeira linha de nossa tabela. O símbolo "&" separa as colunas e "\\" indica que aquela linha acabou.

\hline
\hline
$G_{2}/U(2)$ & $4$ & $2$ &  $4$\\
\hline
$F_{4}/SU(3)\times SU(2)\times U(1)$   & $24$ & $12$ &  $4$\\
\hline
$E_{6}/SU(3)\times SU(3)\times SU(2)\times U(1)$  & $36$ & $18$ &  $4$\\
\hline
 $E_{7}/SU(5)\times SU(3)\times U(1)$  & $60$ & $30$ &  $8$\\
 $E_{7}/SU(6)\times SU(2)\times U(1)$  & $60$ & $30$ &  $4$\\
\hline
$E_{8}/E_{6}\times SU(2)\times U(1)$  & $108$ & $54$ &  $4$\\
$E_{8}/SU(8)\times U(1)$  & $112$ & $56$ &  $16$\\
\hline
\end{tabular}
%\label{tab2}
\end{table}

\begin{table}[!htb] % aqui começa o ambiente tabela

\centering

\caption{Flag manifolds with three summands of type II}\label{table:3comp-tipo2} % igual ao ambiente figura

\begin{tabular}{|c|c|c|c|} 

\hline % este comando coloca uma linha na tabela

Flag manifold $G/K$& $d_{1}$ & $d_{2}$ & $d_{3}$\\ % esta é a primeira linha de nossa tabela. O símbolo "&" separa as colunas e "\\" indica que aquela linha acabou.
\hline
\hline
$SU(l+m+n)/S(U(l)\times U(m)\times U(n) \ (l,m,n\in \mathbb{Z}^{+})$ & $2mn$ & $2mp$ & $2np$\\
\hline
$SO(2l)/U(1)\times U(l-1) \ (l \geq 4)$ & $2(l-1)$ & $2(l-1)$ & $(l-1)(l-2)$\\
\hline
$E_{6}/SO(8)\times U(1)\times U(1)$& 16 & 16 & 16\\
\hline
\end{tabular}
%\label{tab2}
\end{table}

 Flag manifolds with three isotropic summands of type I the summands satisfy the following relations with respect to $\got{m}_{1}$, $\got{m}_{2}$ and $\got{m}_{3}$: %\info{ver onde colocar}
\begin{eqnarray*}
&[\mathfrak{m}_{1},\mathfrak{m}_{1}]\subset \mathfrak{k}\oplus\mathfrak{m}_{2}, \ \ \ [\mathfrak{m}_{1},\mathfrak{m}_{2}]\subset \mathfrak{m}_{1}\oplus \mathfrak{m}_{3} \ \ \ [\mathfrak{m}_{1},\mathfrak{m}_{3}]\subset \mathfrak{m}_{2} \\ 
&[\mathfrak{m}_{2},\mathfrak{m}_{2}]\subset \mathfrak{k}, \ \ \ [\mathfrak{m}_{2},\mathfrak{m}_{3}]\subset \mathfrak{m}_{1} \ \ \ \mbox{and} \ \ \ [\mathfrak{m}_{3},\mathfrak{m}_{3}]\subset \mathfrak{k}.
\end{eqnarray*}

In the case of flag manifolds with three isotropic summands of type II the components $\got{m}_{1}$, $\got{m}_{2}$ and $\got{m}_{3}$ satisfy the following inclusions: %\info{ver onde colocar e arrumar a referência}
\begin{eqnarray*}
&[\mathfrak{m}_{1},\mathfrak{m}_{1}]\subset \mathfrak{k}, \ \ \ [\mathfrak{m}_{1},\mathfrak{m}_{2}]= \mathfrak{m}_{3} \ \ \ [\mathfrak{m}_{1},\mathfrak{m}_{3}]=\mathfrak{m}_{2} \\ 
&[\mathfrak{m}_{2},\mathfrak{m}_{2}]\subset \mathfrak{k}, \ \ \ [\mathfrak{m}_{2},\mathfrak{m}_{3}]= \mathfrak{m}_{1} \ \ \ \mbox{and} \ \ \ [\mathfrak{m}_{3},\mathfrak{m}_{3}]\subset \mathfrak{k}.
\end{eqnarray*}

%\duvida{Acho que essa parte tem que entrar no texto} 

Consequently, there are two groups of triples with zero sum:
$$
\begin{array}{ll}
\alpha+\beta+\gamma=0 & \mbox{where}\ \alpha\in \mathfrak{m}_{1}, \ \beta\in\mathfrak{m}_{1} \ \mbox{and} \ \gamma\in\mathfrak{m}_{2};\\
\alpha+\beta+\gamma=0 & \mbox{where}\ \alpha\in \mathfrak{m}_{1}, \ \beta\in\mathfrak{m}_{2} \ \mbox{and} \ \gamma\in\mathfrak{m}_{3}.
\end{array}
$$

\begin{definition}
Let $G/K$ be a generalized flag manifold equipped with an invariant almost complex structure $J$ and Riemannian metric $g$. We define
\begin{equation}\label{constanteKL}
K=\sum_{\substack{\{\alpha,\beta,\gamma\}\ \rm{is} \\  (0,3)-\rm{triple}}}m_{\alpha,\beta}^{2}, \hspace{0.5cm} {\rm and} \hspace{0.5cm}
L=\sum_{\substack{\{\delta,\sigma,\eta\} \ \rm{is} \\ (1,2)-\rm{triple}}}m_{\delta,\sigma}^{2},\\
\end{equation}
where the real numbers $m_{\alpha,\beta}$, $m_{\delta,\sigma}$ are the structure constants induced by the Weyl basis.
\end{definition}

Recall we have four invariant almost complex structure (up to conjugation)\\ $J_1 =(+,+,+)$, $J_2=(-,+,+)$, $J_3=(+,-,+)$, $J_4=(+,+,-)$ on a flag manifold $G/K$ with three isotropic summands (cf. \cite{kimura}). We will analyze the solution of the equation $2s_1-s=0$ separately for each invariant almost complex structure $J_i$, $i=1,\ldots, 4$.   

% Please add the following required packages to your document preamble:
% \usepackage{multirow}
\begin{table}[h]
\caption{Invariant almost complex structure $J_1=(+,+,+)$}
\label{table-J1}
\begin{tabular}{|c|c|c|cc|}
\hline
\multirow{2}{*}{\begin{tabular}[c]{@{}c@{}}Flag\\ Manifold\end{tabular}} & \multirow{2}{*}{\begin{tabular}[c]{@{}c@{}}$(1,2)$-triples\\ $(\alpha+\beta+\gamma)=0$\end{tabular}}                                      & \multirow{2}{*}{\begin{tabular}[c]{@{}c@{}}$(0,3)$-triples\\ $(\alpha+\beta+\gamma)=0$\end{tabular}} & \multicolumn{2}{c|}{\begin{tabular}[c]{@{}c@{}}Gray-Hervella\\ Classification\end{tabular}}                                                         \\ \cline{4-5} 
                                                                         &                                                                                                                                           &                                                                                                      & \multicolumn{1}{c|}{Class}                                                  & Condition on the metric                                                            \\ \hline
Type I                                                                   & \begin{tabular}[c]{@{}c@{}}$\alpha \in m_1, \beta\in \mathfrak{m}_1, \gamma\in \mathfrak{m}_2$\\  or\\ $\alpha \in \mathfrak{m}_1, \beta\in \mathfrak{m}_2, \gamma\in \mathfrak{m}_3$\end{tabular} & \ding{55}                                                                                                 & \multicolumn{1}{c|}{\begin{tabular}[c]{@{}c@{}}K\"ahler\\ $\mathcal{W}_3$\end{tabular}} & \begin{tabular}[c]{@{}c@{}}$\lambda_2=2\lambda_1$ and $\lambda_3=3\lambda_1$\\ Otherwise\end{tabular} \\ \hline
Type II                                                                  & $\alpha \in \mathfrak{m}_1, \beta\in \mathfrak{m}_2, \gamma\in \mathfrak{m}_3$                                                                                              & \ding{55}                                                                                                 & \multicolumn{1}{c|}{\begin{tabular}[c]{@{}c@{}}K\"ahler\\ $\mathcal{W}_3$\end{tabular}} & \begin{tabular}[c]{@{}c@{}}$\lambda_3=\lambda_1+\lambda_2$\\ Otherwise\end{tabular}           \\ \hline
\end{tabular}
\end{table}

\begin{theorem}\label{teo3somandosJ1}
Let $G/K$ be a generalized flag manifold with three isotropy summands equipped with the invariant almost complex structure $J_1$ described in Table \ref{table-J1} and invariant Riemannian metric $g=(\lambda_1, \lambda_2,\lambda_3)$. Then $2s_1-s=0$ holds for $(G/K, g, J_1)$ if the metric $g$ satisfies the following conditions: 
%the Chern scalar curvature $s_1$ and the Riemannian scalar curature $s$ for $(G/K, g, J_1)$ satisfies the equation $2s_1-s=0$ in the following cases:
\begin{enumerate}
    \item $G/K$ is type I
    \begin{enumerate}
        \item $g=(\lambda_1, 2\lambda_1, 3\lambda_1)$. In this case, $(g,J_1)$ is K\"ahler.
    \end{enumerate}
    \item $G/K$ is type II
    \begin{enumerate}
        \item $g=(\lambda_1,\lambda_2, \lambda_3)$ with $\lambda_3=\lambda_1+\lambda_2$. In this case, $(g,J_1)$ is K\"ahler. 
    \end{enumerate}
\end{enumerate}
\end{theorem}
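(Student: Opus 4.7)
The plan is to specialize the general identity \eqref{formula-curvatura} with the help of Proposition \ref{prop-norma} to the invariant almost complex structure $J_{1}=(+,+,+)$, and then read off the zero locus of $2s_{1}-s$ in the parameters $(\lambda_{1},\lambda_{2},\lambda_{3})$. The crucial preliminary observation is a sign calculation: with the standing convention $\varepsilon_{-\alpha}=-\varepsilon_{\alpha}$, every zero-sum triple $\alpha+\beta+\gamma=0$ built from $J_{1}=(+,+,+)$ inevitably carries the sign pattern $(+,+,-)$ (up to permutation), and hence
\[
\varepsilon_{\alpha}\varepsilon_{\beta}\varepsilon_{\gamma}+\varepsilon_{\alpha}+\varepsilon_{\beta}+\varepsilon_{\gamma}=(-1)+1+1+(-1)=0.
\]
Inserting this into Proposition \ref{prop-norma} immediately kills both $\|N^{0}\|^{2}$ and $\|(dF)^{-}\|^{2}$; combined with the vanishing of the Lee form from Lemma \ref{lema-sm-neg}, the formula \eqref{formula-curvatura} collapses to
\[
2s_{1}-s=\tfrac{1}{2}\|(dF)^{+}\|^{2}.
\]
In particular $J_{1}$ is integrable on both types (as expected), and the task reduces to characterizing when $(dF)^{+}=0$.

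The second step is to compute $\|(dF)^{+}\|^{2}$ explicitly using the bracket relations governing each type. For any $(1,2)$-triple of sign pattern $(+,+,-)$, the bracketed factor in the $(dF)^{+}$-formula from Proposition \ref{prop-norma} reduces to $4(\lambda_{\alpha}+\lambda_{\beta}-\lambda_{\gamma})$, so its contribution is a strictly positive multiple of $(\lambda_{\alpha}+\lambda_{\beta}-\lambda_{\gamma})^{2}/(\lambda_{\alpha}\lambda_{\beta}\lambda_{\gamma})$. For type~I the bracket relations produce two families of zero-sum triples, $\mathfrak{m}_{1}+\mathfrak{m}_{1}-\mathfrak{m}_{2}=0$ and $\mathfrak{m}_{1}+\mathfrak{m}_{2}-\mathfrak{m}_{3}=0$, yielding an expression of the shape
\[
2s_{1}-s=\frac{A\,(2\lambda_{1}-\lambda_{2})^{2}}{\lambda_{1}^{2}\lambda_{2}}+\frac{B\,(\lambda_{1}+\lambda_{2}-\lambda_{3})^{2}}{\lambda_{1}\lambda_{2}\lambda_{3}},
\]
with $A,B>0$ assembled from the Weyl-basis structure constants $m_{\alpha,\beta}^{2}$. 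For type~II the bracket relations leave only the family $\mathfrak{m}_{1}+\mathfrak{m}_{2}-\mathfrak{m}_{3}=0$, and a single square term in $\lambda_{1}+\lambda_{2}-\lambda_{3}$ survives.

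Since each of the above is a sum of squares with strictly positive coefficients, $2s_{1}-s=0$ forces every square to vanish separately: in type~I one gets $\lambda_{2}=2\lambda_{1}$ together with $\lambda_{3}=\lambda_{1}+\lambda_{2}=3\lambda_{1}$, and in type~II one gets $\lambda_{3}=\lambda_{1}+\lambda_{2}$. At these parameter values the identity $\|DF\|^{2}=\|(dF)^{+}\|^{2}+\tfrac{1}{4}\|N^{0}\|^{2}+\tfrac{1}{3}\|(dF)^{-}\|^{2}$ recorded earlier in the paper, together with the already-established vanishing of $N^{0}$ and $(dF)^{-}$ for $J_{1}$, gives $DF=0$, so $(g,J_{1})$ is K\"ahler as claimed. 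The only step that requires any bookkeeping is confirming $A>0$ in type~I, that is, that $[\mathfrak{m}_{1},\mathfrak{m}_{1}]$ projects nontrivially onto $\mathfrak{m}_{2}$; this is precisely the characterizing feature of the type~I family in Table \ref{table:3compo-sigma} and can be read off the highest-root data underlying Table \ref{table:3comp-tipo1}, so no genuinely computational difficulty arises.
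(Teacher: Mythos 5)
Your proposal is correct and follows essentially the same route as the paper: specialize Proposition \ref{prop-norma} to $J_{1}=(+,+,+)$, observe that $\|N^{0}\|^{2}=\|(dF)^{-}\|^{2}=0$ so that formula \eqref{formula-curvatura} reduces to $2s_{1}-s=\tfrac{1}{2}\|(dF)^{+}\|^{2}$, and conclude from the resulting sum of positively weighted squares (two terms for type I, one for type II) that the zero locus is exactly the K\"ahler parameter values. Your explicit sign computation killing the $(0,3)$-contributions and your remark on the positivity of the structure-constant sums merely make explicit what the paper leaves implicit.
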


\begin{proof}
Let us split our analysis for flag manifolds of type I and II:
\begin{enumerate}
\item[(a)] {\bf Flag manifolds of type I: } 

Let us define
%\duvida{ver onde vai colocar assa parte}
$$L_{1}=\sum_{\substack{\{\delta,\sigma,\eta\} \ \rm{is} \\ (1,2)-\rm{triple}}}m_{\delta,\sigma}^{2}, \ \ \mbox{where} \ \ \delta\in \mathfrak{m}_{1}, \ \sigma\in\mathfrak{m}_{1} \ \mbox{and} \ \eta\in\mathfrak{m}_{2}\\
$$

$$L_{2}=\sum_{\substack{\{\delta,\sigma,\eta\} \ \rm{is} \\ (1,2)-\rm{triple}}}m_{\delta,\sigma}^{2}, \ \ \mbox{where} \ \ \delta\in \mathfrak{m}_{1}, \ \sigma\in\mathfrak{m}_{2} \ \mbox{and} \ \eta\in\mathfrak{m}_{3}\\
$$

We obtain
$\|(dF)^{-}\|^{2}=\|N^{0}\|^{2}=0$.

%\duvida{esses L's não são os mesmos}
$\|(dF)^{+}\|^{2}= L_1\dfrac{(\lambda_{1}+\lambda_{1}-\lambda_{2})^{2}}{6\lambda_{1}^{2}\lambda_{3}}+L_2\dfrac{(\lambda_{1}+\lambda_{2}-\lambda_{3})^{2}}{6\lambda_{1}\lambda_{2}\lambda_{3}}$.

It follows that, 
\begin{eqnarray*}
2s_{1}-s&=&-\dfrac{5}{6}\|(dF)^{-}\|^{2}+\dfrac{1}{8}\|N^{0}\|^{2}+\dfrac{1}{2}\|(dF)^{+}\|^{2}\\
&=&L_1\dfrac{(\lambda_{1}+\lambda_{1}-\lambda_{2})^{2}}{12\lambda_{1}^{2}\lambda_{2}} + L_2\dfrac{(\lambda_{1}+\lambda_{2}-\lambda_{3})^{2}}{12\lambda_{1}\lambda_{2}\lambda_{3}}\geq 0
\end{eqnarray*}
Therefore, the equation $2s_{1}-s=0$ there is no solution at least the pair $(g,J)$ is Kähler what we already well known. 

\

\item[(b)] {\bf Flag manifolds of type II: } 

We start by computing the components of the covariant derivative of the K\"ahler form. By a direct computation we get
\begin{equation}
    \|(dF)^{-}\|^{2}=\|N^{0}\|^{2}=0, \quad {\rm and} \quad \|(dF)^{+}\|^{2}=L\dfrac{(\lambda_{1}+\lambda_{2}-\lambda_{3})^{2}}{6\lambda_{1}\lambda_{2}\lambda_{3}},
\end{equation}
where the non-zero constant $L$ is defined by equation \ref{constanteKL} and depends on the flag manifold.

By equation \ref{formula-curvatura} we find
\begin{eqnarray*}
2s_{1}-s&=&-\dfrac{5}{6}\|(dF)^{-}\|^{2}+\dfrac{1}{8}\|N^{0}\|^{2}+\dfrac{1}{2}\|(dF)^{+}\|^{2}\\
&=&L\dfrac{(\lambda_{1}+\lambda_{2}-\lambda_{3})^{2}}{12\lambda_{1}\lambda_{2}\lambda_{3}}\geq 0
\end{eqnarray*}

Hence, there is no solution for the equation $2s_{1}-s=0$ unless the pair $(g,J)$ is K\"ahler, and this solution is already well known. %\info{Não depende de L}

\end{enumerate}
\end{proof}

% Please add the following required packages to your document preamble:
% \usepackage{multirow}
\begin{table}[h]
\caption{Invariant almost complex structure $J_2=(-,+,+)$}
\label{table-J2}
\begin{tabular}{|c|c|c|cc|}
\hline
\multirow{2}{*}{\begin{tabular}[c]{@{}c@{}}Flag\\ Manifold\end{tabular}} & \multirow{2}{*}{\begin{tabular}[c]{@{}c@{}}$(1,2)$-triples\\ $(\alpha+\beta+\gamma)=0$\end{tabular}} & \multirow{2}{*}{\begin{tabular}[c]{@{}c@{}}$(0,3)$-triples\\ $(\alpha+\beta+\gamma)=0$\end{tabular}} & \multicolumn{2}{c|}{\begin{tabular}[c]{@{}c@{}}Gray-Hervella\\ Classification\end{tabular}}                                                                                                                 \\ \cline{4-5} 
                                                                         &                                                                                                      &                                                                                                      & \multicolumn{1}{c|}{Class}                                                                                                 & Condition on the metric                                                                     \\ \hline
Type I                                                                   & $\alpha \in \mathfrak{m}_1, \beta\in \mathfrak{m}_1, \gamma\in \mathfrak{m}_2$                                                        & $\alpha \in \mathfrak{m}_1, \beta\in \mathfrak{m}_2, \gamma\in \mathfrak{m}_3$                                                        & \multicolumn{1}{c|}{\begin{tabular}[c]{@{}c@{}}$\mathcal{W}_1\oplus \mathcal{W}_2$\\ $\mathcal{W}_1\oplus \mathcal{W}_3$\\ $\mathcal{W}_1\oplus \mathcal{W}_2\oplus \mathcal{W}_3$\end{tabular}} & \begin{tabular}[c]{@{}c@{}}$\lambda_3=-\lambda_1+\lambda_2$ and $\lambda_2>\lambda_1$\\ $\lambda_1=\lambda_2$\\ Otherwise\end{tabular} \\ \hline
Type II                                                                  & $\alpha \in \mathfrak{m}_1, \beta\in \mathfrak{m}_2, \gamma\in \mathfrak{m}_3$                                                        & \ding{55}                                                                                                 & \multicolumn{1}{c|}{\begin{tabular}[c]{@{}c@{}}K\"ahler\\ $\mathcal{W}_3$\end{tabular}}                                                & \begin{tabular}[c]{@{}c@{}}$\lambda_3=\lambda_1+\lambda_2$\\ Otherwise\end{tabular}                    \\ \hline
\end{tabular}
\end{table}

\begin{theorem}\label{teo3somandosJ2}
Let $G/K$ be a generalized flag manifold with three isotropy summands equipped with the invariant almost complex structure $J_2$ described in Table \ref{table-J2} and invariant Riemannian metric $g=(\lambda_1, \lambda_2,\lambda_3)$. Then $2s_1-s=0$ holds for $(G/K, g, J_2)$ if the metric $g$ satisfies the following conditions: 

\begin{enumerate}
    \item $G/K$ is type I
    \begin{enumerate}
       
        \item $\lambda_{1}<\lambda_{2}<2(\sqrt{10}+3) \lambda_{1}$ and\\
$\lambda_{3}=\frac{-\sqrt{K \left(4 \lambda_{1}^2+12 \lambda_{1} \lambda_{2}-\lambda_{2}^2\right) \left(K \left(4 \lambda_{1}^2+12 \lambda_{1} \lambda_{2}-\lambda_{2}^2\right)+4 L \lambda_1 (\lambda_{2}-\lambda_{1})\right)}+K \left(4 \lambda_{1}^2+12 \lambda_{1} \lambda_{2}-\lambda_{2}^2\right)-2 L \lambda_{1}^2+2 L \lambda_{1} \lambda_{2}}{2 L \lambda_{1}}$. In this case, $(g,J_2)$ belongs to the Gray-Hervella class $\mathcal{W}_{1}\oplus \mathcal{W}_{2}\oplus \mathcal{W}_{3}$.

        \item $g=(\lambda_1, \lambda_2, \lambda_3)$ with $\lambda_2=(2\sqrt{10}+6)\lambda_1$ and $\lambda_3=\lambda_2-\lambda_1$. In this case, $(g,J_2)$ belongs to the Gray-Hervella class $\mathcal{W}_{1}\oplus \mathcal{W}_{2}$.
        \item $g=(\lambda_1, \lambda_2, \lambda_3)$ with $\lambda_1=\lambda_2=\frac{L \lambda_3}{15 K}$.  In this case, $(g,J_2)$ belongs to the Gray-Hervella class $\mathcal{W}_{1}\oplus \mathcal{W}_{3}$.

    \end{enumerate}
    \item $G/K$ is type II
    \begin{enumerate}
        \item $g=(\lambda_1,\lambda_2, \lambda_3)$ with $\lambda_2=\lambda_1+\lambda_3$. In this case, $(g,J_2)$ is K\"ahler. 
    \end{enumerate}
\end{enumerate}
\end{theorem}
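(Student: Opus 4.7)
The plan is to apply the curvature identity~\eqref{formula-curvatura}. By Lemma~\ref{lema-sm-neg}, every invariant almost Hermitian structure on a generalized flag manifold is cosymplectic, so the Lee form and its codifferential vanish, and the identity reduces to $2s_1-s=-\tfrac{5}{6}\|(dF)^-\|^2+\tfrac{1}{8}\|N^0\|^2+\tfrac{1}{2}\|(dF)^+\|^2$. I then use Proposition~\ref{prop-norma} to evaluate each of the three norms, summing over the zero-sum root triples allowed by the bracket relations of Type~I (resp.\ Type~II), with sign data $\varepsilon_1=-1$, $\varepsilon_2=\varepsilon_3=+1$ dictated by $J_2=(-,+,+)$ and the rule $\varepsilon_{-\alpha}=-\varepsilon_\alpha$. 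Finally I solve $2s_1-s=0$ for the eigenvalues $(\lambda_1,\lambda_2,\lambda_3)$ of the metric.

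For Type~II the only zero-sum triples come from the bracket $[\mathfrak{m}_1,\mathfrak{m}_2]=\mathfrak{m}_3$ and have sign pattern $(-,+,-)$, so the factor $\varepsilon_\alpha\varepsilon_\beta\varepsilon_\gamma+\varepsilon_\alpha+\varepsilon_\beta+\varepsilon_\gamma$ vanishes identically. Hence $\|(dF)^-\|^2=\|N^0\|^2=0$, while Proposition~\ref{prop-norma} gives $\|(dF)^+\|^2=L(\lambda_2-\lambda_1-\lambda_3)^2/(6\lambda_1\lambda_2\lambda_3)$, so $2s_1-s\ge 0$ with equality precisely when $\lambda_2=\lambda_1+\lambda_3$. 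This is the K\"ahler condition, which proves part~(2).

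For Type~I there are two families of zero-sum triples. Those of shape $(\mathfrak{m}_1,\mathfrak{m}_1,\mathfrak{m}_2)$ carry sign pattern $(-,-,-)$ and contribute to both $\|(dF)^-\|^2$ and $\|N^0\|^2$ with total structure-constant weight $K$; those of shape $(\mathfrak{m}_1,\mathfrak{m}_2,\mathfrak{m}_3)$ carry sign pattern $(-,+,-)$ and contribute only to $\|(dF)^+\|^2$ with weight $L$. Assembling the three norms from Proposition~\ref{prop-norma} and simplifying yields
\[
2s_1-s \;=\; \frac{K\bigl(\lambda_2^2-12\lambda_1\lambda_2-4\lambda_1^2\bigr)}{12\lambda_1^2\lambda_2} \;+\; \frac{L(\lambda_1-\lambda_2+\lambda_3)^2}{12\lambda_1\lambda_2\lambda_3}.
\]
Setting this to zero and clearing denominators produces a quadratic in $\lambda_3$; choosing the branch that is positive in the regime $\lambda_1<\lambda_2<2(\sqrt{10}+3)\lambda_1$ gives the generic family~(a). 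The upper endpoint $\lambda_2=2(\sqrt{10}+3)\lambda_1$ is precisely the positive root of $\lambda_2^2-12\lambda_1\lambda_2-4\lambda_1^2=0$, where the first summand vanishes. The remaining families arise as degenerations: imposing $\|(dF)^+\|^2=0$ forces $\lambda_3=\lambda_2-\lambda_1$, whereupon the residual equation pins $\lambda_2=(2\sqrt{10}+6)\lambda_1$ and places $(g,J_2)$ in $\mathcal{W}_1\oplus\mathcal{W}_2$, yielding~(b); imposing $\|N^0\|^2=0$ forces $\lambda_1=\lambda_2$, after which the equation collapses to the linear relation $\lambda_3=15K\lambda_1/L$, placing $(g,J_2)$ in $\mathcal{W}_1\oplus\mathcal{W}_3$, yielding~(c).

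The main obstacle is bookkeeping: correctly identifying the sign pattern of each triple under $J_2$, tracking which of $\|(dF)^-\|^2$, $\|N^0\|^2$, $\|(dF)^+\|^2$ receive contributions from each triple family, and then extracting from the resulting quadratic in $\lambda_3$ the branch which remains positive on the relevant interval of $\lambda_2$. The Gray-Hervella class of each solution is then read off immediately by checking which of the three norms vanish under the parameter relations obtained.
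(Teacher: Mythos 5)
Your proposal is correct and follows essentially the same route as the paper: drop the Lee-form terms via Lemma \ref{lema-sm-neg}, evaluate $\|(dF)^-\|^2$, $\|N^0\|^2$, $\|(dF)^+\|^2$ from Proposition \ref{prop-norma} over the two triple families, and reduce $2s_1-s=0$ to $K(\lambda_2^2-12\lambda_1\lambda_2-4\lambda_1^2)\lambda_3+L\lambda_1(\lambda_1-\lambda_2+\lambda_3)^2=0$, with the three Type~I cases obtained exactly as the paper's case-by-case Gray--Hervella analysis and Type~II reducing to the K\"ahler condition. Your identification of the $(\mathfrak{m}_1,\mathfrak{m}_1,\mathfrak{m}_2)$ triples as $(0,3)$ and the $(\mathfrak{m}_1,\mathfrak{m}_2,\mathfrak{m}_3)$ triples as $(1,2)$ agrees with the formulas actually used in the paper's proof (Table \ref{table-J2} appears to list these two entries in swapped columns for Type~I).
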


\begin{proof}
We will proceed as in the proof of Theorem \ref{teo3somandosJ1}. Let us split our analysis for flag manifolds of type I and II:
\begin{enumerate}

\item[a)] {\bf Flag manifolds of type I:}

We get:
\begin{equation}
  \|(dF)^{-}\|^{2}=K\dfrac{(2\lambda_{1}+\lambda_{2})^{2}}{6\lambda_{1}^{2}\lambda_{2}}, \quad \quad \|(dF)^{+}\|^{2}=L\dfrac{(\lambda_{1}-\lambda_{2}+\lambda_{3})^{2}}{6\lambda_{1}\lambda_{2}\lambda_{3}}  
\end{equation}
and
\begin{equation}
  \|N^{0}\|^{2}=8K\dfrac{(-2\lambda_{1}+\lambda_{1}+\lambda_{2})^{2}+(\lambda_{1}-2\lambda_{1}+\lambda_{2})^{2}+(2\lambda_{1}-2\lambda_{2})^{2}}{27\lambda_{1}^{2}\lambda_{2}}  
\end{equation},
where the non-zero constants $K$ and $L$ are defined by equation \ref{constanteKL} and depends on the flag manifold.

Let us analyze case-by-case the Gray-Hervella class with respect to the almost complex structure $J_2$: 

\

\begin{enumerate}
\item[(I)] For $\mathcal{W}_{1}\oplus \mathcal{W}_{2}\oplus \mathcal{W}_{3}$: 
\begin{eqnarray*}
2s_{1}-s&=&-\dfrac{5}{6}\|(dF)^{-}\|^{2}+\dfrac{1}{8}\|N^{0}\|^{2}+\dfrac{1}{2}\|(dF)^{+}\|^{2}\\
&=&-5K\dfrac{(2\lambda_{1}+\lambda_{2})^{2}}{36\lambda_{1}^{2}\lambda_{2}}+K\dfrac{(-2\lambda_{1}+\lambda_{1}+\lambda_{2})^{2}+(\lambda_{1}-2\lambda_{1}+\lambda_{2})^{2}+(2\lambda_{1}-2\lambda_{2})^{2}}{27\lambda_{1}^{2}\lambda_{2}}\\
&+&L\dfrac{(\lambda_{1}-\lambda_{2}+\lambda_{3})^{2}}{12\lambda_{1}\lambda_{2}\lambda_{3}}
\end{eqnarray*}

Thus, $2s_1-s=0$ if, and only if $$K(-4\lambda_1^2 -12 \lambda_1\lambda_2 +\lambda_2^2)\lambda_3 + L \, \lambda_1(\lambda_1 -\lambda_2 +\lambda_3)^2=0.$$

Examples of solutions: $g=(\lambda_{1},\lambda_{2},\lambda_{3})$ parameterized by
\begin{enumerate}
\item $\lambda_{1}<\lambda_{2}<2(\sqrt{10}+3) \lambda_{1}$ and\\
$\lambda_{3}=\frac{-\sqrt{K \left(4 \lambda_{1}^2+12 \lambda_{1} \lambda_{2}-\lambda_{2}^2\right) \left(K \left(4 \lambda_{1}^2+12 \lambda_{1} \lambda_{2}-\lambda_{2}^2\right)+4 L \text{l1} (\lambda_{2}-\lambda_{1})\right)}+K \left(4 \lambda_{1}^2+12 \lambda_{1} \lambda_{2}-\lambda_{2}^2\right)-2 L \lambda_{1}^2+2 L \lambda_{1} \lambda_{2}}{2 L \lambda_{1}}$
\item $\lambda_{1}=1, \lambda_{2}=2$ and $\lambda_{3}=\frac{2 \sqrt{6} \sqrt{K (24 K+4 L)}+24 K+2 L}{2 L}$
\end{enumerate}

\

\item[(II)] For $\mathcal{W}_{1}\oplus \mathcal{W}_{2}$: Condition: $\lambda_3=-\lambda_1+\lambda_2$, with $\lambda_2>\lambda_1$.

\begin{eqnarray*}
2s_{1}-s&=&-\dfrac{5}{6}\|(dF)^{-}\|^{2}+\dfrac{1}{8}\|N^{0}\|^{2}+\dfrac{1}{2}\|(dF)^{+}\|^{2}\\
&=&-5K\dfrac{(2\lambda_{1}+\lambda_{2})^{2}}{36\lambda_{1}^{2}\lambda_{2}}+K\dfrac{(-2\lambda_{1}+\lambda_{1}+\lambda_{2})^{2}+(\lambda_{1}-2\lambda_{1}+\lambda_{2})^{2}+(2\lambda_{1}-2\lambda_{2})^{2}}{27\lambda_{1}^{2}\lambda_{2}}\\
&=&\frac{K \left(-4 \lambda_1^2-12 \lambda_1 \lambda_2+\lambda_2^2\right)}{12 \lambda_1^2 \lambda_2}
\end{eqnarray*}

We conclude that $2s_{1}-s=0$ if and only if $\lambda_2=(2 \sqrt{10} +6) \lambda_1$. %\info{Não depende de K}

%In this case, we conclude that there isn't solution of the equation $2s_{1}-s=0$.

\bigskip 

\item[(III)] For $\mathcal{W}_{1}\oplus \mathcal{W}_{3}$: Condition $\lambda_1=\lambda_2$.

\begin{eqnarray*}
2s_{1}-s&=&-\dfrac{5}{6}\|(dF)^{-}\|^{2}+\dfrac{1}{8}\|N^{0}\|^{2}+\dfrac{1}{2}\|(dF)^{+}\|^{2}\\
&=&-5K\dfrac{(2\lambda_{1}+\lambda_{2})^{2}}{36\lambda_{1}^{2}\lambda_{2}}+L\dfrac{(\lambda_{1}-\lambda_{2}+\lambda_{3})^{2}}{12\lambda_{1}\lambda_{2}\lambda_{3}}
\end{eqnarray*}.

In this case, we conclude that $2s_{1}-s=0$ if and only if $\lambda_1=\frac{L \lambda_3}{15 K}$.
%\info{Depende de K e L}

%There isn't solution of the equation $2s_{1}-s=0$.
\end{enumerate}

\

\item[(b)] {\bf Flag manifolds of type II:} 

We start by computing the components of the covariant derivative of the K\"ahler form. By a direct computation we have  
\begin{equation}
   \|(dF)^{-}\|^{2}=\|N^{0}\|^{2}=0, \quad {\rm and} \quad \|(dF)^{+}\|^{2}=L\dfrac{(\lambda_{1}-\lambda_{2}+\lambda_{3})^{2}}{6\lambda_{1}\lambda_{2}\lambda_{3}}
\end{equation}
where the non-zero constant $L$ is defined by equation \ref{constanteKL} and depends on the flag manifold. 

By equation \ref{formula-curvatura} we have
\begin{eqnarray*}
2s_{1}-s&=&-\dfrac{5}{6}\|(dF)^{-}\|^{2}+\dfrac{1}{8}\|N^{0}\|^{2}+\dfrac{1}{2}\|(dF)^{+}\|^{2}\\
&=&L\dfrac{(\lambda_{1}-\lambda_{2}+\lambda_{3})^{2}}{12\lambda_{1}\lambda_{2}\lambda_{3}}
\end{eqnarray*}

In this case, $2s_{1}-s=0$ if and only if $\lambda_{2}=\lambda_{1}+\lambda_{3}$, i.e., $(g,J_2)$ is Kähler.  %\info{Não depende de L} 
\end{enumerate}
\end{proof}

% Please add the following required packages to your document preamble:
% \usepackage{multirow}
\begin{table}[h]
\caption{Invariant almost complex structure $J_3=(+,-,+)$}
\label{table-J3}
\begin{tabular}{|c|c|c|cc|}
\hline
\multirow{2}{*}{\begin{tabular}[c]{@{}c@{}}Flag\\ Manifold\end{tabular}} & \multirow{2}{*}{\begin{tabular}[c]{@{}c@{}}$(1,2)$-triples\\ $(\alpha+\beta+\gamma)=0$\end{tabular}} & \multirow{2}{*}{\begin{tabular}[c]{@{}c@{}}$(0,3)$-triples\\ $(\alpha+\beta+\gamma)=0$\end{tabular}} & \multicolumn{2}{c|}{\begin{tabular}[c]{@{}c@{}}Gray-Hervella\\ Classification\end{tabular}}                                                                                                                 \\ \cline{4-5} 
                                                                         &                                                                                                      &                                                                                                      & \multicolumn{1}{c|}{Class}                                                                                                 & Condition on the metric                                                        \\ \hline
Type I                                                                   & $\alpha \in \mathfrak{m}_1, \beta\in \mathfrak{m}_2, \gamma\in \mathfrak{m}_3$                                                        & $\alpha \in \mathfrak{m}_1, \beta\in \mathfrak{m}_1, \gamma\in \mathfrak{m}_2$                                                        & \multicolumn{1}{c|}{\begin{tabular}[c]{@{}c@{}}$\mathcal{W}_1\oplus \mathcal{W}_2$\\ $\mathcal{W}_1\oplus \mathcal{W}_3$\\ $\mathcal{W}_1\oplus \mathcal{W}_2\oplus \mathcal{W}_3$\end{tabular}} & \begin{tabular}[c]{@{}c@{}}$\lambda_3=-\lambda_1+\lambda_2$ and $\lambda_2>\lambda_1$\\ $\lambda_1=\lambda_2$\\ Otherwise\end{tabular} \\ \hline
Type II                                                                  & $\alpha \in \mathfrak{m}_1, \beta\in \mathfrak{m}_2, \gamma\in \mathfrak{m}_3$                                                        & \ding{55}                                                                                                 & \multicolumn{1}{c|}{\begin{tabular}[c]{@{}c@{}}K\"ahler\\ $\mathcal{W}_3$\end{tabular}}                                                & \begin{tabular}[c]{@{}c@{}}$\lambda_3=\lambda_1+\lambda_2$\\ Otherwise\end{tabular}                    \\ \hline
\end{tabular}
\end{table}

\begin{theorem} \label{teo3somandosJ3}
Let $G/K$ be a generalized flag manifold with three isotropy summands equipped with the invariant almost complex structure $J_3$ described in Table \ref{table-J3} and invariant Riemannian metric $g=(\lambda_1, \lambda_2,\lambda_3)$. Then $2s_1-s=0$ holds for $(G/K, g, J_3)$ if the metric $g$ satisfies the following conditions: 
%the Chern scalar curvature $s_1$ and the Riemannian scalar curature $s$ for $(G/K, g, J_1)$ satisfies the equation $2s_1-s=0$ in the following cases:
\begin{enumerate}
    \item $G/K$ is type I
    \begin{enumerate}
        %\item $\lambda_1=\lambda_2>0$ and $\lambda_3=15\lambda_1 K/L$
        \item $\lambda_2=\lambda_1\left( 6-\frac{2L}{K}+\frac{2}{K}\sqrt{10K^2-5KL + L^2}\right)$, $\lambda_3=\lambda_1\left( 5-\frac{2L}{K}+\frac{2}{K}\sqrt{10K^2-5KL + L^2}\right) $
     %\item $y=(6+2\sqrt{10})x$, $z=(5+2\sqrt{10})x$.
        \item $\lambda_1<\lambda_2$ and\\ $\lambda_3=\frac{1}{2} \left( \frac{1}{L\lambda_1}\sqrt{K \left(4 \lambda_1^2+12 \lambda_1 \lambda_2-\lambda_2^2\right) \left(K \left(4 \lambda_1^2+12 \lambda_1 \lambda_2-\lambda_2^2\right)+4 L \lambda_1 (\lambda_1-\lambda_2)\right)} +2 \lambda_1-2 \lambda_2\right)+ \frac{1}{2 L \lambda_1} K \left(4 \lambda_1^2+12 \lambda_1 \lambda_2-\lambda_2^2\right). $
        \item[] In the case of items $a)$ and $b)$ above,  $(g,J_3)$ belongs to the Gray-Hervella class $\mathcal{W}_{1}\oplus \mathcal{W}_{2}\oplus \mathcal{W}_{3}$.
        \item $\lambda_1=\lambda_2=\frac{L \lambda_3}{15K}$.  In this case, $(g,J_3)$ belongs to the Gray-Hervella class $\mathcal{W}_{1}\oplus \mathcal{W}_{3}$.
    \end{enumerate}
    \item $G/K$ is type II
    \begin{enumerate}
        \item $g=(\lambda_1,\lambda_2, \lambda_3)$ with $\lambda_1=\lambda_2+\lambda_3$. In this case, $(g,J_3)$ is K\"ahler. %\completar{$J_3$ é integrável ?}
    \end{enumerate}
\end{enumerate}
\end{theorem}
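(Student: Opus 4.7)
The plan is to follow the template of Theorem~\ref{teo3somandosJ2}, splitting the argument into Type~I and Type~II. For $J_3=(+,-,+)$ I first identify the triples: in Type~I the triples coming from $[\mathfrak{m}_1,\mathfrak{m}_1]\subset \mathfrak{k}\oplus\mathfrak{m}_2$ have sign pattern $(\varepsilon_1,\varepsilon_1,-\varepsilon_2)=(+,+,+)$ and are $(0,3)$-triples contributing to $K$, while those from $[\mathfrak{m}_1,\mathfrak{m}_2]\subset \mathfrak{m}_1\oplus\mathfrak{m}_3$ have signs $(+,-,-)$ and are $(1,2)$-triples contributing to $L$; in Type~II, as recorded in Table~\ref{table-J3}, only the $(1,2)$-triples in $\mathfrak{m}_1+\mathfrak{m}_2+\mathfrak{m}_3$ occur.

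Applying Proposition~\ref{prop-norma} in Type~I yields
\[
\|(dF)^-\|^2=\frac{K(2\lambda_1+\lambda_2)^2}{6\lambda_1^2\lambda_2},\quad \|N^0\|^2=\frac{16K(\lambda_1-\lambda_2)^2}{9\lambda_1^2\lambda_2},\quad \|(dF)^+\|^2=\frac{L(\lambda_1-\lambda_2-\lambda_3)^2}{6\lambda_1\lambda_2\lambda_3}.
\]
Substituting into \eqref{formula-curvatura} and combining the $K$-terms gives
\[
2s_1-s=\frac{K(-4\lambda_1^2-12\lambda_1\lambda_2+\lambda_2^2)\lambda_3+L\lambda_1(\lambda_1-\lambda_2-\lambda_3)^2}{12\lambda_1^2\lambda_2\lambda_3}.
\]
I would then split the analysis by Gray--Hervella subclass. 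On the $\mathcal{W}_1\oplus\mathcal{W}_3$ locus $\lambda_1=\lambda_2$, the numerator factors as $\lambda_1\lambda_3(-15K\lambda_1+L\lambda_3)$, producing the relation $\lambda_1=L\lambda_3/(15K)$ of case~(c). In the generic $\mathcal{W}_1\oplus\mathcal{W}_2\oplus\mathcal{W}_3$ regime I view the numerator as a quadratic in $\lambda_3$: setting $A=-4\lambda_1^2-12\lambda_1\lambda_2+\lambda_2^2$, the discriminant simplifies to $KA\bigl(KA+4L\lambda_1(\lambda_2-\lambda_1)\bigr)$, and the positive branch of the quadratic formula produces case~(b). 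Specializing further to the $\mathcal{W}_1\oplus\mathcal{W}_2$ relation $\lambda_3=\lambda_2-\lambda_1$ factors out $(\lambda_1-\lambda_2)$ and reduces the equation to a quadratic in $\lambda_2/\lambda_1$ whose discriminant is proportional to $10K^2-5KL+L^2$, yielding case~(a).

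For Type~II the absence of $(0,3)$-triples forces $\|(dF)^-\|^2=\|N^0\|^2=0$, so $2s_1-s=L(\lambda_1-\lambda_2-\lambda_3)^2/(12\lambda_1\lambda_2\lambda_3)\ge 0$, with equality iff $\lambda_1=\lambda_2+\lambda_3$, which is precisely the K\"ahler condition. The main technical obstacle is the algebra of cases~(a) and~(b) in Type~I: one must verify that the discriminant is non-negative (pinning down the range $\lambda_1<\lambda_2<2(\sqrt{10}+3)\lambda_1$ in case~(a) and a compatible constraint in~(b)), select the branch giving $\lambda_3>0$, and reconcile the resulting closed form with the expression stated in the theorem. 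Careful book-keeping of the Gray--Hervella subclasses is also needed to avoid double-counting the specialized locus~(a) inside the generic family~(b).
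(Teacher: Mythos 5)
Your computation follows the paper's proof essentially step for step: the three norms from Proposition \ref{prop-norma}, the resulting master equation
\[
2s_1-s=\frac{K\left(-4\lambda_1^2-12\lambda_1\lambda_2+\lambda_2^2\right)\lambda_3+L\,\lambda_1\left(-\lambda_1+\lambda_2+\lambda_3\right)^2}{12\lambda_1^2\lambda_2\lambda_3},
\]
the quadratic-in-$\lambda_3$ analysis with discriminant $KA\bigl(KA+4L\lambda_1(\lambda_2-\lambda_1)\bigr)$ giving case (b), the factorization $\lambda_1\lambda_3(-15K\lambda_1+L\lambda_3)$ on the locus $\lambda_1=\lambda_2$ giving case (c), and the Type II argument are all exactly what the paper does and are correct.

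The one concrete error is your Gray--Hervella bookkeeping for case (a). Since $\|(dF)^+\|^2\propto(-\lambda_1+\lambda_2+\lambda_3)^2$ for $J_3=(+,-,+)$ on Type I, the class $\mathcal{W}_1\oplus\mathcal{W}_2$ (vanishing of $(dF)^+$) is the locus $\lambda_3=\lambda_1-\lambda_2$ with $\lambda_1>\lambda_2$, \emph{not} $\lambda_3=\lambda_2-\lambda_1$; on the correct locus the master equation reduces to $K(-4\lambda_1^2-12\lambda_1\lambda_2+\lambda_2^2)=0$, which forces $\lambda_2=(6+2\sqrt{10})\lambda_1>\lambda_1$ and hence has \emph{no} admissible solution — this is precisely the paper's conclusion in its subcase (II), and it is why Table \ref{tab:3somandos-into} attributes the $\mathcal{W}_1\oplus\mathcal{W}_2$ solutions to $J_2$ and $J_4$ but not to $J_3$ on this locus. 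Your algebra for the ansatz $\lambda_3=\lambda_2-\lambda_1$ (factoring out $\lambda_2-\lambda_1$ and solving the quadratic $Kt^2+(4L-12K)t-4(K+L)=0$ in $t=\lambda_2/\lambda_1$ with discriminant proportional to $10K^2-5KL+L^2$) is correct and does produce family (a); but that family lies in the generic class $\mathcal{W}_1\oplus\mathcal{W}_2\oplus\mathcal{W}_3$, as the theorem itself states, since $(dF)^+$ does not vanish there. (You were likely misled by Table \ref{table-J3}, whose $\mathcal{W}_1\oplus\mathcal{W}_2$ row appears to have been copied from the $J_2$ table and is inconsistent with the proof.) With that relabeling your argument matches the paper's.
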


\begin{proof}

We will proceed as in the proof of Theorem \ref{teo3somandosJ1}. Let us split our analysis for flag manifolds of type I and II:

\begin{enumerate}

\item[(a)] {\bf Flag manifolds of type I:} We obtain

$$\|(dF)^{-}\|^{2}=K\dfrac{(2\lambda_{1}+\lambda_{2})^{2}}{6\lambda_{1}^{2}\lambda_{2}}, \quad\quad \|(dF)^{+}\|^{2}=L\dfrac{(-\lambda_{1}+\lambda_{2}+\lambda_{3})^{2}}{6\lambda_{1}\lambda_{2}\lambda_{3}},$$

$$\|N^{0}\|^{2}=8K\dfrac{(-2\lambda_{1}+\lambda_{1}+\lambda_{2})^{2}+(\lambda_{1}-2\lambda_{1}+\lambda_{2})^{2}+(2\lambda_{1}-2\lambda_{2})^{2}}{27\lambda_{1}^{2}\lambda_{2}},$$
where the non-zero constants $K$ and $L$ are defined by equation \ref{constanteKL} and depend on the flag manifold. 

Let us analyze case-by-case the Gray-Hervella class with respect to the almost complex structure $J_3$: 

\

\begin{enumerate}
 
\item[(I)] For $\mathcal{W}_{1}\oplus \mathcal{W}_{2}\oplus \mathcal{W}_{3}$: 
%\duvida{mudei a terceira linha desta equação: inverti os sinais de lamda1 e lambda2}
\begin{eqnarray*}
2s_{1}-s&=&-\dfrac{5}{6}\|(dF)^{-}\|^{2}+\dfrac{1}{8}\|N^{0}\|^{2}+\dfrac{1}{2}\|(dF)^{+}\|^{2}\\
&=&-5K\dfrac{(2\lambda_{1}+\lambda_{2})^{2}}{36\lambda_{1}^{2}\lambda_{2}}+K\dfrac{(-2\lambda_{1}+\lambda_{1}+\lambda_{2})^{2}+(\lambda_{1}-2\lambda_{1}+\lambda_{2})^{2}+(2\lambda_{1}-2\lambda_{2})^{2}}{27\lambda_{1}^{2}\lambda_{2}}\\
&+&L\dfrac{(-\lambda_{1}+\lambda_{2}+\lambda_{3})^{2}}{12\lambda_{1}\lambda_{2}\lambda_{3}}
\end{eqnarray*}

Therefore we have $2s_1-s=0$ if, and only if  $$K (-4 \lambda_1^2 - 12 \lambda_1 \lambda_2 + \lambda_2^2) \lambda_3 + L \lambda_1 (-\lambda_1 + \lambda_2 + \lambda_3)^2=0.$$
 
 Examples of solutions:
 
 \begin{enumerate}
     %\item $\lambda_1=\lambda_2>0$ and $\lambda_3=15\lambda_1 K/L$
     \item $\lambda_2=\lambda_1\left( 6-\frac{2L}{K}+\frac{2}{K}\sqrt{10K^2-5KL + L^2}\right)$, $\lambda_3=\lambda_1\left( 5-\frac{2L}{K}+\frac{2}{K}\sqrt{10K^2-5KL + L^2}\right) $
     %\item $y=(6+2\sqrt{10})x$, $z=(5+2\sqrt{10})x$.
     \item $\lambda_1>0, 0<\lambda_2<\lambda_1$ and\\ $\lambda_3=\frac{1}{2} \left( \frac{1}{L\lambda_1}\sqrt{K \left(4 \lambda_1^2+12 \lambda_1 \lambda_2-\lambda_2^2\right) \left(K \left(4 \lambda_1^2+12 \lambda_1 \lambda_2-\lambda_2^2\right)+4 L \lambda_1 (\lambda_1-\lambda_2)\right)} +2 \lambda_1-2 \lambda_2\right)$\\
     $+ \frac{ K \left(4 \lambda_1^2+12 \lambda_1 \lambda_2-\lambda_2^2\right)}{2 L \lambda_1} $
\end{enumerate}

\

\item[(II)] For $\mathcal{W}_{1}\oplus \mathcal{W}_{2}$: Condition: $\lambda_3=\lambda_1-\lambda_2$, with $\lambda_1>\lambda_2$.

\begin{eqnarray*}
2s_{1}-s&=&-\dfrac{5}{6}\|(dF)^{-}\|^{2}+\dfrac{1}{8}\|N^{0}\|^{2}+\dfrac{1}{2}\|(dF)^{+}\|^{2}\\
&=&-5K\dfrac{(2\lambda_{1}+\lambda_{2})^{2}}{36\lambda_{1}^{2}\lambda_{2}}+K\dfrac{(-2\lambda_{1}+\lambda_{1}+\lambda_{2})^{2}+(\lambda_{1}-2\lambda_{1}+\lambda_{2})^{2}+(2\lambda_{1}-2\lambda_{2})^{2}}{27\lambda_{1}^{2}\lambda_{2}}\\
&=&\frac{K \left(-4 \lambda_1^2-12 \lambda_1 \lambda_2+\lambda_2^2\right)}{12 \lambda_1^2 \lambda_2}
\end{eqnarray*}

There isn't solution of the equation $2s_{1}-s=0$. %\info{Não depende de K}

\item[(III)] For $\mathcal{W}_{1}\oplus \mathcal{W}_{3}$: Condition $\lambda_1=\lambda_2$

%\duvida{coloquei o número 12 ao invés de 2 em lamda1.lamd2.lambda3}
\begin{eqnarray*}
2s_{1}-s&=&-\dfrac{5}{6}\|(dF)^{-}\|^{2}+\dfrac{1}{8}\|N^{0}\|^{2}+\dfrac{1}{2}\|(dF)^{+}\|^{2}\\
&=&-5K\dfrac{(2\lambda_{1}+\lambda_{2})^{2}}{36\lambda_{1}^{2}\lambda_{2}}+L\dfrac{(-\lambda_{1}+\lambda_{2}+\lambda_{3})^{2}}{12\lambda_{1}\lambda_{2}\lambda_{3}}
\end{eqnarray*}

In this case, we conclude that $2s_{1}-s=0$ if and only if $\lambda_3=\frac{15 K \lambda_1}{L}$. %\info{Depende de K e L}

\end{enumerate}

\

\item[(b)] {\bf Flag manifolds of type II: } 
We start by computing the components of the covariant derivative of the K\"ahler form. By a direct computation we have  
 \begin{equation}
   \|(dF)^{-}\|^{2}=\|N^{0}\|^{2}=0, \quad {\rm and} \quad \|(dF)^{+}\|^{2}=L\dfrac{(-\lambda_{1}+\lambda_{2}+\lambda_{3})^{2}}{6\lambda_{1}\lambda_{2}\lambda_{3}}.
\end{equation}
where the non-zero constant $L$ is defined by equation \ref{constanteKL} and depends on the flag manifold.  

By equation \ref{formula-curvatura} we have
\begin{eqnarray*}
2s_{1}-s&=&-\dfrac{5}{6}\|(dF)^{-}\|^{2}+\dfrac{1}{8}\|N^{0}\|^{2}+\dfrac{1}{2}\|(dF)^{+}\|^{2}\\
&=&L\dfrac{(-\lambda_{1}+\lambda_{2}+\lambda_{3})^{2}}{12\lambda_{1}\lambda_{2}\lambda_{3}}.
\end{eqnarray*}

Hence $2s_{1}-s=0$ if and only if $\lambda_{1}=\lambda_{2}+\lambda_{3}$, i.e., $(g, J_3)$ is K\"ahler. %\completar{$J_3$ é integrável ?}    %\info{Não depende de L}

\end{enumerate}

\end{proof}

% Please add the following required packages to your document preamble:
% \usepackage{multirow}
\begin{table}[h]
\caption{Invariant almost complex structure $J_4=(+,+,-)$}
\label{table-J4}
\begin{tabular}{|c|c|c|cc|}
\hline
\multirow{2}{*}{\begin{tabular}[c]{@{}c@{}}Flag\\ Manifold\end{tabular}} & \multirow{2}{*}{\begin{tabular}[c]{@{}c@{}}$(1,2)$-triples\\ $(\alpha+\beta+\gamma)=0$\end{tabular}} & \multirow{2}{*}{\begin{tabular}[c]{@{}c@{}}$(0,3)$-triples\\ $(\alpha+\beta+\gamma)=0$\end{tabular}} & \multicolumn{2}{c|}{\begin{tabular}[c]{@{}c@{}}Gray-Hervella\\ Classification\end{tabular}}                                                                                                       \\ \cline{4-5} 
                                                                         &                                                                                                      &                                                                                                      & \multicolumn{1}{c|}{Class}                                                                                                 & Condition on the metric                                              \\ \hline
Type I                                                                   & $\alpha \in \mathfrak{m}_1, \beta\in \mathfrak{m}_1, \gamma\in \mathfrak{m}_2$                                                        & $\alpha \in \mathfrak{m}_1, \beta\in \mathfrak{m}_2, \gamma\in \mathfrak{m}_3$                                                        & \multicolumn{1}{c|}{\begin{tabular}[c]{@{}c@{}}$\mathcal{W}_1\oplus \mathcal{W}_2$\\ $\mathcal{W}_1\oplus \mathcal{W}_3$\\ $\mathcal{W}_1\oplus \mathcal{W}_2\oplus \mathcal{W}_3$\end{tabular}} & \begin{tabular}[c]{@{}c@{}}$\lambda_2=2\lambda_1$\\ $\lambda_1=\lambda_2=\lambda_3$\\ Otherwise\end{tabular} \\ \hline
Type II                                                                  & \ding{55}                                                                                                 & $\alpha \in \mathfrak{m}_1, \beta\in \mathfrak{m}_2, \gamma\in \mathfrak{m}_3$                                                        & \multicolumn{1}{c|}{\begin{tabular}[c]{@{}c@{}}$\mathcal{W}_1$\\ $\mathcal{W}_1\oplus \mathcal{W}_2$\end{tabular}}                                       & \begin{tabular}[c]{@{}c@{}}$\lambda_1=\lambda_2=\lambda_3$\\ Otherwise\end{tabular}          \\ \hline
\end{tabular}
\end{table}

\begin{theorem} \label{teo3somandosJ4}
Let $G/K$ be a generalized flag manifold with three isotropy summands equipped with the invariant almost complex structure $J_4$ described in Table \ref{table-J4} and invariant Riemannian metric $g=(\lambda_1, \lambda_2,\lambda_3)$. Then $2s_1-s=0$ holds for $(G/K, g, J_4)$ if the metric $g$ satisfies the following conditions: 
%the Chern scalar curvature $s_1$ and the Riemannian scalar curature $s$ for $(G/K, g, J_1)$ satisfies the equation $2s_1-s=0$ in the following cases:
\begin{enumerate}
    \item $G/K$ is type I
    \begin{enumerate}
        \item $g=(\lambda_1,\lambda_2, \lambda_3)$ with $\lambda_1=1$, $\lambda_2=1$ and $\lambda_3=\frac{1}{2} \sqrt{\frac{160 K^2-24 K L+L^2}{K^2}}+\frac{12 K-L}{2 K}$. In this case, $(g,J_4)$ belongs to the Gray-Hervella class $\mathcal{W}_{1}\oplus \mathcal{W}_{2}\oplus \mathcal{W}_{3}$.
        \item $g=(\lambda_1,2\lambda_1, (9+2\sqrt{22})\lambda_1)$. In this case, $(g,J_4)$ belongs to the Gray-Hervella class $\mathcal{W}_{1}\oplus \mathcal{W}_{2}$. 
    \end{enumerate}
    \item $G/K$ is type II
    \begin{enumerate}
        \item $g=(\lambda_1,\lambda_2, \lambda_3)$ with $\lambda_3=2 \sqrt{2} \sqrt{\lambda_1^2+3 \lambda_1 \lambda_2+\lambda_2^2}+3 (\lambda_1+\lambda_2)$. In this case, $(g,J_4)$ belongs to the Gray-Hervella class $\mathcal{W}_{1}\oplus \mathcal{W}_{2}$.
    \end{enumerate}
\end{enumerate}
\end{theorem}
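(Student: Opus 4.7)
The proof will follow the same template as the proofs of Theorems \ref{teo3somandosJ1}, \ref{teo3somandosJ2}, and \ref{teo3somandosJ3}, treating Type I and Type II separately and then splitting each according to Gray--Hervella class. The first step is to read off from Table \ref{table-J4} the triples contributing to $\|N^0\|^2$, $\|(dF)^-\|^2$, and $\|(dF)^+\|^2$. For $J_4=(+,+,-)$ on Type I flags the $(0,3)$-triples live in $\mathfrak{m}_1+\mathfrak{m}_2+\mathfrak{m}_3=0$ (with sign pattern $(+,+,+)$) and the $(1,2)$-triples live in $\mathfrak{m}_1+\mathfrak{m}_1+\mathfrak{m}_2=0$ (pattern $(+,+,-)$, so $\varepsilon_\alpha\varepsilon_\beta\varepsilon_\gamma+\sum\varepsilon=0$). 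On Type II, only the $(0,3)$-triples $\mathfrak{m}_1+\mathfrak{m}_2+\mathfrak{m}_3=0$ survive, since $[\mathfrak{m}_i,\mathfrak{m}_i]\subset\mathfrak{k}$.

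Next I substitute these sign patterns into Proposition \ref{prop-norma} and use the constants $K,L$ from \eqref{constanteKL}. For Type I this yields
\begin{equation*}
\|(dF)^-\|^2=K\,\frac{(\lambda_1+\lambda_2+\lambda_3)^2}{6\lambda_1\lambda_2\lambda_3},\qquad \|(dF)^+\|^2=L\,\frac{(2\lambda_1-\lambda_2)^2}{6\lambda_1^{2}\lambda_2},
\end{equation*}
\begin{equation*}
\|N^0\|^2=\frac{8K}{27\lambda_1\lambda_2\lambda_3}\bigl[(-2\lambda_1+\lambda_2+\lambda_3)^2+(\lambda_1-2\lambda_2+\lambda_3)^2+(\lambda_1+\lambda_2-2\lambda_3)^2\bigr],
\end{equation*}
while for Type II the $\|(dF)^+\|^2$ term vanishes and the other two norms have the same expressions as above. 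Inserting these into \eqref{formula-curvatura} gives $2s_1-s$ as an explicit rational function of $(\lambda_1,\lambda_2,\lambda_3)$.

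The third step is to specialize the metric within each Gray--Hervella subclass listed in Table \ref{table-J4} and solve $2s_1-s=0$. For Type I, in the generic class $\mathcal W_1\oplus\mathcal W_2\oplus\mathcal W_3$ one obtains a polynomial equation in the three parameters whose solution set is nontrivial; specializing $\lambda_1=\lambda_2=1$ isolates one branch and produces the explicit $\lambda_3$ listed in the theorem. In the class $\mathcal W_1\oplus\mathcal W_2$ (integrable constraint $(dF)^+=0$, forcing $\lambda_2=2\lambda_1$) the $\|(dF)^+\|^2$ term drops and the resulting quadratic in $\lambda_3/\lambda_1$ yields $\lambda_3=(9+2\sqrt{22})\lambda_1$. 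The class $\mathcal W_1\oplus\mathcal W_3$ (nearly-K\"ahler-like, $\lambda_1=\lambda_2=\lambda_3$) gives only the K\"ahler solution, so no non-trivial solution arises and it is omitted from the statement. For Type II the class $\mathcal W_1$ (the constraint $\lambda_1=\lambda_2=\lambda_3$) reduces the expression to a positive multiple, so no solution exists; in $\mathcal W_1\oplus\mathcal W_2$ the equation $2s_1-s=0$ becomes a quadratic in $\lambda_3$ with positive leading coefficient and discriminant $32(\lambda_1^2+3\lambda_1\lambda_2+\lambda_2^2)$, yielding the closed form in the theorem.

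The main obstacle is not conceptual but algebraic: the Type I, generic-class expression $2s_1-s=0$ is a polynomial identity in three unknowns that is too large to solve in closed form, so one must either fix two of the parameters to extract a representative family or parameterize the zero locus case by case. The choice $\lambda_1=\lambda_2=1$ is made precisely to reduce the problem to a univariate quadratic that admits an explicit positive root; demonstrating that the quantity under the square root is positive and that the resulting $\lambda_3$ is positive (so that $g$ is a genuine Riemannian metric) is the only verification that requires care. Once that is done, each subcase reduces to a routine calculation mirroring the previous theorems.
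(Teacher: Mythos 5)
Your proposal follows the paper's proof essentially verbatim: the same triple structure read off from Table \ref{table-J4}, the same three norms from Proposition \ref{prop-norma} (your expressions for $\|(dF)^{-}\|^{2}$, $\|(dF)^{+}\|^{2}$, $\|N^{0}\|^{2}$ agree with the paper's for both types), the same substitution into \eqref{formula-curvatura}, and the same case-by-case resolution within each Gray--Hervella class, arriving at the identical solutions. One small inaccuracy worth fixing: in the Type I class $\mathcal{W}_{1}\oplus\mathcal{W}_{3}$ (i.e.\ $\lambda_1=\lambda_2=\lambda_3$) the equation reduces to $(3L-45K)\lambda_3^{3}=0$, which has \emph{no} solution at all for a generic flag (only when $L=15K$), rather than ``only the K\"ahler solution'' --- $J_4$ is non-integrable so no K\"ahler solution exists in that class; this does not affect the theorem since that case is omitted from the statement.
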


\begin{proof}
Let us split our analysis for flag manifolds of type I and II:

\begin{enumerate}

\item[(a)] {\bf Flag manifolds of type I:} We get

$$\|(dF)^{-}\|^{2}=K\dfrac{(\lambda_{1}+\lambda_{2}+\lambda_{3})^{2}}{6\lambda_{1}\lambda_{2}\lambda_{3}}, \quad \|(dF)^{+}\|^{2}=L\dfrac{(-\lambda_{1}-\lambda_{1}+\lambda_{2})^{2}}{6\lambda_{1}^{2}\lambda_{2}}, $$

$$\|N^{0}\|^{2}=8K\dfrac{(-2\lambda_{1}+\lambda_{2}+\lambda_{3})^{2}+(\lambda_{1}-2\lambda_{2}+\lambda_{3})^{2}+(\lambda_{1}+\lambda_{2}-2\lambda_{3})^{2}}{27\lambda_{1}\lambda_{2}\lambda_{3}}.$$
where the non-zero constants $K$ and $L$ are defined by equation \ref{constanteKL} and depends on the flag manifold. 
%$\|(dF)^{+}\|^{2}=L\dfrac{(-\lambda_{1}-\lambda_{1}+\lambda_{2})^{2}}{6\lambda_{1}^{2}\lambda_{2}}$,

% \begin{eqnarray*}
% K=\sum_{\substack{\{\alpha,\beta,\gamma\}\ \rm{is} \\  (0,3)-\rm{triple}}}m_{\alpha,\beta}^{2}\\
% L=\sum_{\substack{\{\delta,\sigma,\eta\} \ \rm{is} \\ (1,2)-\rm{triple}}}m_{\delta,\sigma}^{2}\\
% \end{eqnarray*}

Let us analyze case-by-case the Gray-Hervella class with respect to the almost complex structure $J_4$: 
 
\begin{enumerate}
 
\item[(I)] For $\mathcal{W}_{1}\oplus \mathcal{W}_{2}\oplus \mathcal{W}_{3}$: 
\begin{eqnarray*}
2s_{1}-s&=&-\dfrac{5}{6}\|(dF)^{-}\|^{2}+\dfrac{1}{8}\|N^{0}\|^{2}+\dfrac{1}{2}\|(dF)^{+}\|^{2}\\
&=&-5K\dfrac{(\lambda_{1}+\lambda_{2}+\lambda_{3})^{2}}{36\lambda_{1}\lambda_{2}\lambda_{3}}+K\dfrac{(-2\lambda_{1}+\lambda_{2}+\lambda_{3})^{2}+(\lambda_{1}-2\lambda_{2}+\lambda_{3})^{2}+(\lambda_{1}+\lambda_{2}-2\lambda_{3})^{2}}{27\lambda_{1}\lambda_{2}\lambda_{3}}\\
&+&L\dfrac{(-\lambda_{1}-\lambda_{1}+\lambda_{2})^{2}}{12\lambda_{1}^{2}\lambda_{2}}
\end{eqnarray*}

Therefore, $2s_1-s=0$ if, and only if,
$$
K \lambda_1 \left(\lambda_1^2-6 \lambda_1 (\lambda_2+\lambda_3)+\lambda_2^2-6 \lambda_2 \lambda_3+\lambda_3^2\right)+L \lambda_3 (\lambda_2-2 \lambda_1)^2=0.
$$
 
Note that $\lambda_1=1$, $\lambda_2=1$ and $\lambda_3=\frac{1}{2} \sqrt{\frac{160 K^2-24 K L+L^2}{K^2}}+\frac{12 K-L}{2 K}$ is a solution of the equation
$2s_{1}-s=0$. % \info{Depende de K e L}

\
 
\item[(II)] For $\mathcal{W}_{1}\oplus \mathcal{W}_{2}$: Condition: $\lambda_2=2\lambda_1$ 

\begin{eqnarray*}
2s_{1}-s&=&-\dfrac{5}{6}\|(dF)^{-}\|^{2}+\dfrac{1}{8}\|N^{0}\|^{2}+\dfrac{1}{2}\|(dF)^{+}\|^{2}\\
&=&-5K\dfrac{(\lambda_{1}+\lambda_{2}+\lambda_{3})^{2}}{36\lambda_{1}\lambda_{2}\lambda_{3}}+K\dfrac{(-2\lambda_{1}+\lambda_{2}+\lambda_{3})^{2}+(\lambda_{1}-2\lambda_{2}+\lambda_{3})^{2}+(\lambda_{1}+\lambda_{2}-2\lambda_{3})^{2}}{27\lambda_{1}\lambda_{2}\lambda_{3}}
\end{eqnarray*}

Therefore, $2s_1-s=0$ if, and only if,
$$K \left(5 x\lambda_1^2-12 \lambda_1 \lambda_3-6 \lambda_1 (2 \lambda_1+\lambda_3)+\lambda_3^2\right)=0$$

In this case, we conclude that $2s_{1}-s=0$ if and only if the metric satisfies $(\lambda_1,2\lambda_1, (9+2\sqrt{22})\lambda_1) $  %$z=\sqrt{22} y+\frac{9 y}{2}$.\info{Não depende de K e L}

\

\item[(III)] For $\mathcal{W}_{1}\oplus \mathcal{W}_{3}$: Condition: $\lambda_{1}=\lambda_{2}=\lambda_{3}$

\begin{eqnarray*}
2s_{1}-s&=&-\dfrac{5}{6}\|(dF)^{-}\|^{2}+\dfrac{1}{8}\|N^{0}\|^{2}+\dfrac{1}{2}\|(dF)^{+}\|^{2}\\
&=&-5K\dfrac{(\lambda_{1}+\lambda_{2}+\lambda_{3})^{2}}{36\lambda_{1}\lambda_{2}\lambda_{3}}+
L\dfrac{(-\lambda_{1}-\lambda_{1}+\lambda_{2})^{2}}{12\lambda_{1}^{2}\lambda_{2}}
\end{eqnarray*}

Therefore, $2s_1-s=0$ if, and only if,
$$
-45 K \lambda_3^3 + 3 L \lambda_3^3=0, 
$$
and there is not solution of the equation $2s_{1}-s=0$ independent of $L$ and $K$. 
{\bf Remark: }It is worth to point out that $L=15K$ is a solution of the equation $2s_{1}-s=0$, but it solution depends on case-by-case examination and we do not include it as a general solution. 
\end{enumerate}

\

\item[(b)] {\bf Flag manifolds type II:} We find 
$$ \|(dF)^{+}\|^{2}=0, \quad\quad \|(dF)^{-}\|^{2}=K\dfrac{(\lambda_{1}+\lambda_{2}+\lambda_{3})^{2}}{6\lambda_{1}\lambda_{2}\lambda_{3}}, $$

$$\|N^{0}\|^{2}=8K\dfrac{(-2\lambda_{1}+\lambda_{2}+\lambda_{3})^{2}+(\lambda_{1}-2\lambda_{2}+\lambda_{3})^{2}+(\lambda_{1}+\lambda_{2}-2\lambda_{3})^{2}}{27\lambda_{1}\lambda_{2}\lambda_{3}}$$
where the non-zero constant $K$ is defined by equation \ref{constanteKL} and depends on the flag manifold.  

By equation \ref{formula-curvatura} we obtain
\begin{eqnarray*}
2s_{1}-s&=&-\dfrac{5}{6}\|(dF)^{-}\|^{2}+\dfrac{1}{8}\|N^{0}\|^{2}+\dfrac{1}{2}\|(dF)^{+}\|^{2}\\
&=&-5K\dfrac{(\lambda_{1}+\lambda_{2}+\lambda_{3})^{2}}{36\lambda_{1}\lambda_{2}\lambda_{3}}+K\dfrac{(-2\lambda_{1}+\lambda_{2}+\lambda_{3})^{2}+(\lambda_{1}-2\lambda_{2}+\lambda_{3})^{2}+(\lambda_{1}+\lambda_{2}-2\lambda_{3})^{2}}{27\lambda_{1}\lambda_{2}\lambda_{3}}
\end{eqnarray*}

Therefore, $2s_1-s=0$ if, and only if, 
$$
K \left(\lambda_1^2-6 \lambda_1 (\lambda_2+\lambda_3)+\lambda_2^2-6 \lambda_2 \lambda_3+\lambda_3^2\right)=0.
$$
and the positive solutions are given by $\lambda_1>0$, $\lambda_2>0$ and $$
\lambda_3=2 \sqrt{2} \sqrt{\lambda_1^2+3 \lambda_1 \lambda_2+\lambda_2^2}+3 (\lambda_1+\lambda_2).
$$
\end{enumerate}
\end{proof}

\section{Examples}
In this section we will provide some explicit computations on some generalized flag manifolds. Here one can obtain precisely the $(1,2)$ and $(0,3)$ triples and consequently the constants $K$ and $L$ defined by equation \ref{constanteKL}. Using this facts we will provide some solutions for the equation $s=2s_1$ for specific flag manifolds. 

\subsection{$G_{2}/U(2)$ } %with three summands
We consider the Type I flag manifold $G_{2}/U(2)$ of real dimension $10$, where $U(2)$ is represented by the long root of $G_2$ (the algebraic structure of the $G_{2}$ can be seen in \cite{gdois}). This manifold has three isotropic summands given by
$\got{m}_{1}$, $\got{m}_{2}$ and $\got{m}_{3}$ given by

$$
\begin{array}{l}
\got{m}_{1}=\got{g}_{\alpha_{1}+\alpha_{2}}\oplus \got{g}_{\alpha_{2}},\\
\got{m}_{2}=\got{g}_{\alpha_{1}+2\alpha_{2}},\\
\got{m}_{3}=\got{g}_{\alpha_{1}+3\alpha_{2}}\oplus \got{g}_{2\alpha_{1}+3\alpha_{2}}.
\end{array}
$$
From proposition \ref{numest}, $G_{2}/U(2)$ admits $4$ invariant almost complex structures, up to conjugation:
$$
\begin{array}{ccccccc}
J_{1}=(+,+,+)&&&&&&J_{3}=(-,+,-)\\
J_{2}=(-,+,+)&&&&&&J_{4}=(+,+,-),
\end{array}
$$
where $J_{1}$ is integrable.

The triples of zero sum of $G_{2}/U(2)$ are (up to sign):
$$
\begin{array}{cl}
%1)& \alpha_{1}+\alpha_{2}-(\alpha_{1}+\alpha_{2})=0\\
1) &\alpha_{2}+(\alpha_{1}+\alpha_{2})-(\alpha_{1}+2\alpha_{2})=0\\
2) & \alpha_{2}+(\alpha_{1}+2\alpha_{2})-(\alpha_{1}+3\alpha_{2})=0\\
%4)& \alpha_{1}+(\alpha_{1}+3\alpha_{2})-(2\alpha_{1}+3\alpha_{2})=0\\
3) & (\alpha_{1}+\alpha_{2})+(\alpha_{1}+2\alpha_{2})-(2\alpha_{1}+3\alpha_{2})=0.
\end{array}
$$

 We also compute the value of the constants $K$ and $L$ (equations \ref{constanteKL}) for each specific invariant almost complex structure. We summarize the computation in the Table \ref{KLparaG2}.

\begin{table}[h]
\centering

\caption{Constants $K$ and $L$} \label{KLparaG2}

\begin{tabular}{|c|c|c|}
\hline
Structure & $K$ & $L$\\
\hline
\hline
$(+,+,+)$ & $0$  &  $5/12$ \\
\hline
$(-,+,+)$ & $1/6$  &  $1/4$ \\
\hline
$(+,-,+)$ & $1/6$ & $1/4$ \\
\hline
$(+,+,-)$ & $1/4$ & $1/6$ \\
\hline
\end{tabular}
\end{table}

Consider the orthonormal basis of $TM_{\mathbb{C}}$:
\begin{eqnarray*}
\left\{\dfrac{X_{\alpha_{2}}}{\sqrt{x}},\dfrac{X_{\alpha_{1}+\alpha_{2}}}{\sqrt{x}},\dfrac{X_{\alpha_{1}+2\alpha_{2}}}{\sqrt{y}},\dfrac{X_{\alpha_{1}+3\alpha_{2}}}{\sqrt{z}},\dfrac{X_{2\alpha_{1}+3\alpha_{2}}}{\sqrt{z}},
\dfrac{X_{-\alpha_{2}}}{\sqrt{x}},\right.\\
\left.\dfrac{X_{-(\alpha_{1}+\alpha_{2})}}{\sqrt{x}},\dfrac{X_{-(\alpha_{1}+2\alpha_{2})}}{\sqrt{y}},\dfrac{X_{-(\alpha_{1}+3\alpha_{2})}}{\sqrt{z}},\dfrac{X_{-(2\alpha_{1}+3\alpha_{2})}}{\sqrt{z}}\right\}.
\end{eqnarray*}

One can classified Gray-Hervella (GH) classes for $G_2/U(2)$ and study the equation $2s_1=s$ for each GH-class. 

\begin{itemize}
\item Let us consider the invariant almost complex structure $J_{1}=(+,+,+)$ (integrable) and Riemannian metric parameterized by $g=(x,y,z)$. We find

%\item $J_{1}=(+,+,+)$ integrable and $\Lambda=(x,y,z)$

%\begin{eqnarray*}
$$
\|(dF)^{-}\|^{2}=\|N^{0}\|^{2}=0,
$$
%\end{eqnarray*}
and
$$
%\begin{eqnarray*}
\|(dF)^{+}\|=\frac{1}{36 x^2 y}\left(\frac{3 x (x+y-z)^2}{z}+2 (y-2 x)^2 \right).
%\end{eqnarray*}
$$
The covariant derivative of Kähler form is 
\begin{eqnarray*}
\|DF\|^{2}=\frac{3 x^3+2 x^2 (3 y+z)+x \left(3 y^2-14 y z+3 z^2\right)+2 y^2 z}{36 x^2 y z}.
\end{eqnarray*}

We have $(J_{1},g)\in \mathcal{W}_{3}$ unless $y=2x$ and $z=3x$ which corresponds to the K\"ahler class. The solution for the equation $2s_{1}-s=0$ is $y=2x$ and $z=3x$, that is, the unique solution is given by the K\"ahler metric.

\ 

\item Let us consider the invariant almost complex structure $J_{2}=(-,+,+)$ (non-integrable) and Riemannian metric parameterized by $g=(x,y,z)$. We get
$$
\|(dF)^{-}\|^{2}=\dfrac{(2 x+y)^2}{18 x^2 y}, \quad\quad \|N^{0}\|^{2}=\frac{16 (x-y)^2}{27 x^2 y}, \quad\quad \|(dF)^{+}\|=\frac{(x-y+z)^2}{12 x y z}.
$$

The covariant derivative of the K\"ahler form is 
$$
\|DF\|^{2}=\frac{3 x^3+x^2 (14 z-6 y)+x \left(3 y^2-14 y z+3 z^2\right)+6 y^2 z}{36 x^2 y z}.
$$

\begin{enumerate}
    \item If the metric $g$ satisfies $x=y$ then $(J_{2},g)\in \mathcal{W}_{1}\oplus \mathcal{W}_{3}$. In this case
$$2s_{1}-s=0\Leftrightarrow z=10x.$$

\item If the metric $g$ satisfies  $z=-x+y$ and $y>x$ then $(J_{2},g)\in \mathcal{W}_{1}\oplus \mathcal{W}_{2}$. In this case
$$2s_{1}-s=0 \Leftrightarrow y=(2 \sqrt{10} +6)x.$$

\item Otherwise the pair $(J_{2},g)$ belongs to $\mathcal{W}_{1}\oplus \mathcal{W}_{2}\oplus \mathcal{W}_{3}$. In this case
$$
2s_{1}-s=\frac{3 x^3-2 x^2 (3 y+z)+3 x \left(y^2-10 y z+z^2\right)+2 y^2 z}{72 x^2 y z}
$$
The triples $x=1$, $y=2$ and $z=9-\sqrt{5}$ is an example of solution of the equation $2s_{1}-s=0$. %\info{Tenho a solução completa, mas só coloquei uma solução simples}

\end{enumerate}

\bigskip

\item Let us consider the invariant almost complex structure $J_{3}=(+,-,+)$ (non-integrable) and Riemannian metric parameterized by $g=(x,y,z)$. We have

$$
\|(dF)^{-}\|^{2}=\frac{(2 x+y)^2}{18 x^2 y}, \quad \quad \|N^{0}\|^{2}=\frac{16 (x-y)^2}{27 x^2 y}, \quad \quad \|(dF)^{+}\|=\frac{(-x+y+z)^2}{12 x y z}.
$$

The covariant derivative of K\"ahler form is 
$$
\|DF\|^{2}=\frac{3 x^3+x^2 (2 z-6 y)+x \left(3 y^2-2 y z+3 z^2\right)+6 y^2 z}{36 x^2 y z}.
$$

\begin{enumerate}
    \item If the metric $g$ satisfies $x=y$ then $(J_{3},g)\in \mathcal{W}_{1}\oplus \mathcal{W}_{3}$. In this case
$$2s_{1}-s=0\Leftrightarrow z=10x.$$
    \item If the metric $g$ satisfies  $z=x-y$ and $x>y$ then $(J_{3},g)\in \mathcal{W}_{1}\oplus \mathcal{W}_{2}$. In this case there is no solution for equation $2s_{1}-s=0$. 
    \item Otherwise the pair $(J_{3},g)$ belongs to $\mathcal{W}_{1}\oplus \mathcal{W}_{2}\oplus \mathcal{W}_{3}$. One can provide an example of non-trivial solution of the equation $2s_{1}-s=0$ given by  $x=1$, $y=2$ and $z=7+4\sqrt{3}$.  
\end{enumerate}

\bigskip

\item Let us consider the invariant almost complex structure $J_{4}=(+,+,-)$ (non-integrable) and Riemannian metric parameterized by $g=(x,y,z)$.

The $(0,3)+(3,0)$-part exterior derivative of the K\"ahler form is given by
\begin{eqnarray*}
\|(dF)^{-}\|^{2}=\dfrac{(x+y+z)^2}{12 x y z} \ \ \mbox{and}
\end{eqnarray*}
$N^{0}$ is defined by
\begin{eqnarray*}
\|N^{0}\|^{2}=\dfrac{8 \left(x^2-x (y+z)+y^2-y z+z^2\right)}{9 x y z}.
\end{eqnarray*}
Moreover, the $(1,2)+(2,1)$-part of the K\"ahler form is
\begin{eqnarray*}
\|(dF)^{+}\|^{2}=\dfrac{(y-2 x)^2}{18 x^2 y}.
\end{eqnarray*}

The covariant derivative of the K\"ahler form is 
\begin{eqnarray*}
\|DF\|^{2}=\frac{9 x^3+x^2 (2 z-6 y)+x \left(9 y^2-14 y z+9 z^2\right)+2 y^2 z}{36 x^2 y z}.
\end{eqnarray*}

Note that $\|DF\|^{2}=\|(dF)^{+}\|^{2}+\dfrac{1}{4}\|N^{0}\|^{2}+\dfrac{1}{3}\|(dF)^{-}\|^{2}$.

\begin{enumerate}

\item The metric $g$ satisfies $x=y=z$. In this case $(J_{4},g)$ belongs to $\mathcal{W}_{1}\oplus \mathcal{W}_{3}$.

In this case there is not solution for equation $2s_{1}-s=0$. 

\item The metric $g$ satisfies the condition $y=2x$. In this case $(J_{4},g)$ belongs to $\mathcal{W}_{1}\oplus \mathcal{W}_{2}$. We obtain
$$2s_{1}-s=0\Leftrightarrow z=\dfrac{9y}{2}+\sqrt{22}y.$$

\

\item Otherwise the pair $(J_{4},g)$ belongs to $\mathcal{W}_{1}\oplus \mathcal{W}_{2}\oplus \mathcal{W}_{3}$. We get
\begin{eqnarray}\label{eqG2}
2s_{1}-s=\frac{3 x^3-2 x^2 (9 y+5 z)+x \left(3 y^2-26 y z+3 z^2\right)+2 y^2 z}{72 x^2 y z}.
\end{eqnarray}

One example of solution of Equation \ref{eqG2} is given by $x = 1$, $y = 1$ and\\ $z = \frac{1}{3} \left(5 \sqrt{13}+17\right)$ and therefore this metric satisfies the equation $2s_{1}-s=0$. 
\end{enumerate}

\end{itemize}

\subsection{$SU(n+2)/S(U(n)\times U(1)\times U(1)$)}

Let us consider the Cartan subalgebra $\got{h}$ of $\got{su}(n+2)$ described by
$$
\got{h}=\{\mbox{diag}(x_{1},\cdots,x_{n},x_{n+1},x_{n+2}):x_{1}+\cdots+x_{n}+x_{n+1}+x_{n+2}=0,\ x_{i}\in\mathbb{C}, \ i=1,\cdots, n\}.
$$
Moreover, the isotropy representation of $\mathbb{F}_{n+2}$ admits three isotropic summands:\\
$\mathfrak{m}=\mathfrak{m}_{1}\oplus \mathfrak{m}_{2} \oplus \mathfrak{m}_{3}$,
where
$$
\begin{array}{l}
\mathfrak{m}_{1}=\displaystyle\bigoplus_{i=1}^{n}\mathfrak{u}_{\alpha_{i,n+1}}, \quad\quad  %  \ \mbox{for} \  i=1,\cdots n,\\ 
\mathfrak{m}_{2}=\mathfrak{u}_{\alpha_{n+1,n+2}}, \quad\quad
\mathfrak{m}_{3}=\displaystyle\bigoplus_{i=1}^{n}\mathfrak{u}_{\alpha_{i,n+2}}, \\ % \ \mbox{for} \ i=1,\cdots n, 
\end{array}
$$
with $\got{u}_{ij}=\got{su}(n+2)\cap (\got{g}_{ij}\oplus \got{g}_{ji})$.

The matrix representation of $\mathfrak{m}$ is:
$$
\left(
\begin{array}{cccc|c|c}
x_{1}&&&&*&*\\
&x_{2}&&&*&*\\
&&\ddots&&\vdots&\vdots\\
&&&x_{n}&*&*\\
\hline
&&&&x_{n+1}&*\\
\hline
&&&&&x_{n+2}
\end{array}
\right).
$$
The flag manifold $SU(n+2)/S(U(n)\times U(1)\times U(1))$ admits $3$ invariant almost complex structures, up to conjugation and equivalence:
$$
\begin{array}{ccccccccccc}
J_{1}=(+,+,+) &&&&& J_{2}=(-,+,+) &&&&& J_{3}=(+,+,-).
\end{array}
$$
where $J_{1}$ and $J_{2}$ are integrable.

It is easy to see the triples of roots are given by:
$$
\mbox{2n triples}:\left\{
\begin{array}{l}
\alpha_{1,n+1}+\alpha_{n+1,n+2}+\alpha_{n+2,1}=0\\
\alpha_{2,n+1}+\alpha_{n+1,n+2}+\alpha_{n+2,2}=0\\
\ \ \ \ \  \ \ \vdots\\
\alpha_{n,n+1}+\alpha_{n+1,n+2}+\alpha_{n+2,n}=0\\
-\alpha_{1,n+1}-\alpha_{n+1,n+2}-\alpha_{n+2,1}=0\\
-\alpha_{2,n+1}-\alpha_{n+1,n+2}-\alpha_{n+2,2}=0\\
\ \ \ \ \  \ \ \vdots\\
-\alpha_{n,n+1}-\alpha_{n+1,n+2}-\alpha_{n+2,n}=0\\
\end{array}
\right.
$$

We can also classify the invariant almost Hermitian structures as follow:
\begin{itemize}
\item $J_{1}=(+,+,+)$ integrable and $g=(x,y,z)$ invariant Riemannian metric: 
If $z=x+y$ then $(J_{1},g)$ is Kähler, otherwise $(J_{1},g)\in \mathcal{W}_{3}$. We have
$$ \|(dF)^{-}\|^{2}=\|N^{0}\|^{2}=0, \quad\quad \|(dF)^{+}\|^{2}=\dfrac{n(x+y-z)}{3x y z},$$
and $2s_{1}-s=0$ if and only if $z=x+y$, i.e., $(J_{1},g)$ is K\"ahler. 

\bigskip

\item $J_{2}=(-,+,+)$ integrable and $g=(x,y,z)$ invariant Riemannian metric: if $z=-x+y$ with $y>x$ then $(J_{2},g)$ is K\"ahler, otherwise $(J_{2},g)\in \mathcal{W}_{3}$. We have
$$
\|(dF)^{-}\|^{2}=\|N^{0}\|^{2}=0 \quad\quad \|(dF)^{+}\|^{2}=\dfrac{n(x-y+z)}{3x y z}
$$
and $2s_{1}-s=0$ if and only if $z=-x+y$, i.e., $(J_{2},g)$ is K\"ahler.

\bigskip

\item $J_{3}=(+,+,-)$ non-integrable and $g=(x,y,z)$ invariant Riemannian metric: if $x=y=z$ then $(J_{3},g)\in \mathcal{W}_{1}$,  otherwise $(J_{3},g)\in\mathcal{W}_{1}\oplus \mathcal{W}_{2}$. We have
$$
\|(dF)^{-}\|^{2}=\frac{n (x+y+z)^2}{3 x y z}, \quad \quad \|N^{0}\|^{2}=\frac{32 n \left(x^2-x (y+z)+y^2-y z+z^2\right)}{9 x y z}, \quad \quad \|(dF)^{+}\|^{2}=0.
$$
If $(J_{3},g)\in \mathcal{W}_{1}$ there is no solution of the equation $2s_{1}-s=0$. If $(J_{3},g)\in \mathcal{W}_{1}\oplus \mathcal{W}_{2}$ we find the following non-trivial solutions (non-K\"ahler) for the equation $2s_{1}-s=0$: $x>0, y>0$ and $z=3 (x+y) + 2 \sqrt{2} \sqrt{x^2+3 x y+y^2}$ ; or 
$x> 0, y> (3+2\sqrt{2})x$ and $z=3 (x+y)-2 \sqrt{2} \sqrt{x^2+3 x y+y^2}$.

 \end{itemize}
%\end{exe}

\subsection{$F_{4}/SU(3)\times SU(2)\times U(1)$} Let $\{\alpha_1, \alpha_2,\alpha_3, \alpha_4 \}$ be the simple roots of the Lie algebra of $F_4$, with maximal root $\mu=2\alpha_1+3\alpha_2+4\alpha_3+2\alpha_4$. The description of the components $\mathfrak{m}_1$, $\mathfrak{m}_2$, $\mathfrak{m}_3$ of the isotropy representation is given in Table \ref{coordF4}. The coordinates in the table are given in terms of the simple roots $\alpha_1, \ldots, \alpha_4$. For instance $[1,2,4,2]$ means $\alpha_1+2\alpha_2+4\alpha_3+2\alpha_2$. The triples of roots with zero sum are listed in Table \ref{tripleF4}. We also compute the value of the constants $K$ and $L$ (equations \ref{constanteKL}) for each specific invariant almost complex structure. We summarize the computation in the Table \ref{KLparaF4}.

Let us analyze the solution for equation $2s_1-s=0$ for each invariant almost Hermitian structure $(g,J)$, being $g$ an invariant Riemannian metric parametrized by $g=(x,y,z)$. 
%\begin{exe}

\begin{table}[h!]
\centering

\caption{Positive roots of G/K in coordinates $\alpha_{1},\alpha_{2},\alpha_{3},\alpha_{4}$} \label{coordF4} % igual ao ambiente figura

\begin{tabular}{|c|c|c|}
\hline
Roots in $\got{m}_{1}$ & Roots  in $\got{m}_{2}$ & Roots  in $\got{m}_{3}$\\
\hline
\hline
[0,1,0,0] & [1,2,2,0] & [1,3,4,2]\\
\hline
[1,1,0,0] & [1,2,2,1] & [2,3,4,2] \\
\hline
[0,1,1,0] & [1,2,3,1] & \\
\hline
[1,1,1,0] & [1,2,2,2] & \\
\hline
[0,1,2,0] & [1,2,3,2] & \\
\hline
[0,1,1,1] & [1,2,4,2] & \\
\hline
[1,1,2,0] & & \\
\hline
[1,1,1,1] & & \\
\hline
[0,1,2,1] & & \\
\hline
[1,1,2,1] & & \\
\hline
[0,1,2,2] & & \\
\hline
[1,1,2,2] & & \\
\hline
\end{tabular}
\end{table}

\begin{table}[h]
\centering
\caption{Triples with zero sum: $\alpha+\beta+\gamma=0$}
\label{tripleF4}
\begin{tabular}{|c|c|c|c|}
\hline
$\alpha\in \got{m}_{1}$ & $\beta \in\got{m}_{1}$ & $-\gamma\in\got{m}_{2}$ & $m_{\alpha,\beta}^{2}$\\
\hline
\hline
[0,1,0,0] & [1,1,2,0] & [1,2,2,0] & $1$ \\ 
\hline
[0,1,0,0] & [1,1,2,1] & [1,2,2,1] & $1$\\
\hline
[0,1,0,0] & [1,1,2,2] & [1,2,2,2] & $1$\\
\hline
[1,1,0,0] & [0,1,2,0] & [1,2,2,0]& $1$\\
\hline
[1,1,0,0]& [0,1,2,1]&[1,2,2,1] & $1$\\
\hline
[1,1,0,0]&[0,1,2,2]&[1,2,2,2] & $1$ \\
\hline
[0,1,1,0]&[1,1,1,0]&[1,2,2,0] & $1$\\
\hline
[0,1,1,0]&[1,1,1,1]&[1,2,2,1] & $1/2$\\
\hline
[0,1,1,0] &
[1,1,2,1] &
[1,2,3,1] & $1/2$\\
\hline
[0,1,1,0] &
[1,1,2,2] &
[1,2,3,2] & $1$\\
\hline
[1,1,1,0] &
[0,1,1,1] &
[1,2,2,1] & $1/2$\\
\hline
[1,1,1,0] &
[0,1,2,1] &
[1,2,3,1] & $1/2$\\
\hline
[1,1,1,0] &
[0,1,2,2] &
[1,2,3,2] & $1$\\
\hline
[0,1,2,0] &
[1,1,1,1] &
[1,2,3,1] & $1$\\
\hline
[0,1,2,0] &
[1,1,2,2] &
[1,2,4,2] & $1$\\
\hline
[0,1,1,1] &
[1,1,2,0] &
[1,2,3,1] & $1$\\
\hline
[0,1,1,1] &
[1,1,1,1] &
[1,2,2,2] & $1$\\
\hline
[0,1,1,1] &
[1,1,2,1] &
[1,2,3,2] & $1/2$\\
\hline
[1,1,2,0] &
[0,1,2,2] &
[1,2,4,2] & $1$\\
\hline
[1,1,1,1] &
[0,1,2,1] &
[1,2,3,2] & $1/2$\\
\hline
[0,1,2,1] &
[1,1,2,1] &
[1,2,4,2] & $1$\\
\hline
-- & -- & -- & -- \\
\hline
$\alpha \in\got{m}_{1}$ & $\beta\in \got{m}_{2}$ & $-\gamma\in\got{m}_{3}$ & $m_{\alpha,\beta}^{2}$\\
\hline
[0,1,0,0] &
[1,2,4,2] &
[1,3,4,2] & $1$\\
\hline
[1,1,0,0] &
[1,2,4,2] &
[2,3,4,2] & $1$\\
\hline
[0,1,1,0] &
[1,2,3,2] &
[1,3,4,2] & $1$\\
\hline
[1,1,1,0] &
[1,2,3,2] &
[2,3,4,2] & $1$\\
\hline
[0,1,2,0] &
[1,2,2,2] &
[1,3,4,2] & $1$\\
\hline
[0,1,1,1] &
[1,2,3,1] &
[1,3,4,2] & $1$\\
\hline
[1,1,2,0] &
[1,2,2,2] &
[2,3,4,2] & $1$\\
\hline
[1,1,1,1] &
[1,2,3,1] &
[2,3,4,2] & $1$\\
\hline
[0,1,2,1] &
[1,2,2,1] &
[1,3,4,2] & $1$\\
\hline
[1,1,2,1] &
[1,2,2,1] &
[2,3,4,2] & $1$\\
\hline
[0,1,2,2] &
[1,2,2,0] &
[1,3,4,2] & $1$\\
\hline
[1,1,2,2] &
[1,2,2,0] &
[2,3,4,2] & $1$\\
\hline
\end{tabular}
\end{table}

\begin{table}[h]
\centering

\caption{Constants $K$ and $L$} \label{KLparaF4}

\begin{tabular}{|c|c|c|}
\hline
Structure & $K$ & $L$\\
\hline
\hline
$(+,+,+)$ & $0$  &  $30$ \\
\hline
$(-,+,+)$ & $18$  &  $12$ \\
\hline
$(+,-,+)$ & $18$ & $12$ \\
\hline
$(+,+,-)$ & $12$ & $18$ \\
\hline
\end{tabular}
\end{table}

\begin{itemize}

\item {\bf Invariant complex structure $J=(+,+,+)$ (integrable)}: if $y=2x$ and $z=3x$ the pair $(g,J)$ is K\"ahler, otherwise $(J,g)\in \mathcal{W}_{3}$. We have

$$\|(dF)^{-}\|^{2}=\|N^{0}\|^{2}=0, \quad \|(dF)^{+}\|^{2}=\frac{2}{x^2 y} \left(\frac{2 x (x+y-z)^2}{z}+3 (y-2 x)^2\right),$$ 
and $2s_{1}-s=0$ if and only if $y=2x$ and $z=3x$, i.e., when $(J,g)$ is Kähler.

\bigskip

\item {\bf Invariant almost complex structure $J_{1}=(-,+,+)$: } if $z=-x+y$ and $y>x$ then $(J_1,g)\in \mathcal{W}_{1}\oplus \mathcal{W}_{2}$, if $x=y$ then  $(J_1,g)\in\mathcal{W}_{1}\oplus \mathcal{W}_{3}$, otherwise $(J_1,g)\in \mathcal{W}_{1}\oplus \mathcal{W}_{2}\oplus \mathcal{W}_{3}$. We obtain
$$\|(dF)^{-}\|^{2}=\frac{6 (2 x+y)^2}{x^2 y}, \quad \|N^{0}\|^{2}=\frac{64 (x-y)^2}{x^2 y}, \quad \|(dF)^{+}\|^{2}=\frac{4 (x-y+z)^2}{x y z}.$$
If $(J_1,g)\in \mathcal{W}_{1}\oplus \mathcal{W}_{2}$ we have solution for the equation $2s_1-s=0$ given by the metric $y=(2\sqrt{10}+6)x$ and $x>0$. For $(J_2,g)\in \mathcal{W}_{1}\oplus \mathcal{W}_{3}$ we get the metric given by $x=y>0$, $z=45y/2$ is a solution for the equation $2s_1-s=0$. If $(J_1,g)\in \mathcal{W}_{1}\oplus \mathcal{W}_{2}\oplus \mathcal{W}_{3}$ then we have the families of solutions of $2s_1-s=0$:
\begin{enumerate}
    \item $g=(x,y,z)$ parameterized by $x>0, 0<y<x$ and\\ $z=\frac{8 x^2+\sqrt{3} \sqrt{16 x^4+224 x^3 y+512 x^2 y^2-80 x y^3+3 y^4}+40 x y-3 y^2}{4 x}$. 
    %\item $g=(x,y,z)$ parameterized by $x>0$, $y=2(\sqrt{10}+3)x$ and $z=(2\sqrt{10}+5)x$.\duvida{Essa métrica está no caso W1+W2}
    \item $g=(x,y,z)$ parameterized by $x=1$, $y=2$ and $z=19+6\sqrt{10}$.
\end{enumerate}

\bigskip

%\item $J_{2}=(+,-,+)$ almost complex structure and $\Lambda=(x,y,z)$.
\item {\bf Invariant almost complex structure $J_{2}=(+,-,+)$: } if $z=x-y$ and $x>y$ then $(J_2,g)\in \mathcal{W}_{1}\oplus \mathcal{W}_{2}$, if $x=y$ then  $(J_2,g)\in\mathcal{W}_{1}\oplus \mathcal{W}_{3}$, otherwise $(J_2,g)\in \mathcal{W}_{1}\oplus \mathcal{W}_{2}\oplus \mathcal{W}_{3}$. We find
$$\|(dF)^{-}\|^{2}=\frac{6 (2 x+y)^2}{x^2 y}, \quad \|N^{0}\|^{2}=\frac{64 (x-y)^2}{x^2 y}, \quad \|(dF)^{+}\|^{2}=\frac{4 (-x+y+z)^2}{x y z}.$$
If $(J_2,g)\in \mathcal{W}_{1}\oplus \mathcal{W}_{2}$ we do not have solution for the equation $2s_1-s=0$. For $(J_2,g)\in \mathcal{W}_{1}\oplus \mathcal{W}_{3}$ we have the metric given by $x=y>0$, $z=45y/2$ is a solution for the equation $2s_1-s=0$. If $(J_2,g)\in \mathcal{W}_{1}\oplus \mathcal{W}_{2}\oplus \mathcal{W}_{3}$ then we have the families of solutions of $2s_1-s=0$:
\begin{enumerate}
    \item $g=(x,y,z)$ parameterized by $0<y<x$ and\\ $z=\frac{16 x^2+\sqrt{3} \sqrt{80 x^4+352 x^3 y+304 x^2 y^2-64 x y^3+3 y^4}+32 x y-3 y^2}{4 x}$.
    \item $g=(x,y,z)$ parameterized by $x>0$, $y=10x$ and $z=9x$.
\end{enumerate}

\bigskip

\item {\bf Invariant almost complex structure $J_{3}=(+,+,-)$: } if $y=2x$ then $(J_3,g)\in \mathcal{W}_{1}\oplus \mathcal{W}_{2}$, if $x=y=z$ then  $(J_3,g)\in\mathcal{W}_{1}\oplus \mathcal{W}_{3}$, otherwise $(J_3,g)\in \mathcal{W}_{1}\oplus \mathcal{W}_{2}\oplus \mathcal{W}_{3}$. We have
% \item $J_{3}=(+,+,-)$ almost complex structure and $\Lambda=(x,y,z)$.  parei aqui
% For this Hermitian pair we conclude that:
$$\|(dF)^{-}\|^{2}=\frac{4 (x+y+z)^2}{x y z}, \quad \|N^{0}\|^{2}=\frac{128 \left(x^2-x (y+z)+y^2-y z+z^2\right)}{3 x y z}$$, $$\|(dF)^{+}\|^{2}=\frac{6 (y-2 x)^2}{x^2 y}.$$
If $(J_3,g)\in \mathcal{W}_{1}\oplus \mathcal{W}_{3}$ there isn't solution for the equation $2s_1-s=0$. For $(J_2,g)\in \mathcal{W}_{1}\oplus \mathcal{W}_{2}$ we have the metric given by $y=2x$, $z=(2\sqrt{22}+9)x$, $x>0$ is a solution for the equation $2s_1-s=0$. If $(J_3,g)\in \mathcal{W}_{1}\oplus \mathcal{W}_{2}\oplus \mathcal{W}_{3}$ then we have the families of solutions of $2s_1-s=0$:
\begin{enumerate}
    \item $g=(x,y,z)$ parameterized by $z=\frac{\sqrt{-16 x^4+96 x^3 y+560 x^2 y^2-144 x y^3+9 y^4}+24 x y-3 y^2}{4 x}$,
for $x,y>0$ and  $3 x-2 \sqrt{2} \sqrt{x^2}\leq y\leq 2 \sqrt{2} \sqrt{x^2}+3 x$ 
    \item  $g=(x,y,z)$ parameterized by $x=1$, $y=1$ and $z=\frac{1}{4}(21+\sqrt{505})$.
\end{enumerate}

\end{itemize}


\begin{thebibliography}{99}

\bibitem{anas} ANASTASSIOU, S.; CHRYSIKOS, I. The Ricci flow approach to homogeneous Einstein metrics on flag manifolds. Journal of Geometry and Physics, v. 61, n. 8, p. 1587-1600, 2011.

\bibitem{apo} APOSTOLOV, V.; DRAGHICI, T. Almost Kähler 4-manifolds with J-invariant Ricci tensor and
special Weyl tensor. Q. J. Math., 51(3):275–294, 2000. 2

\bibitem{gdois} ARVANITOYEORGOS, A. et al. Homogeneous Einstein metrics on $G_{2}/T$. Proceedings of the American Mathematical Society, v. 141, n. 7, p. 2485-2499, 2013.



\bibitem{besse} BESSE, A. L.. Einstein manifolds, volume 10 of Ergebnisse der Mathematik und ihrer Grenzgebiete
(3) [Results in Mathematics and Related Areas (3)]. Springer-Verlag, Berlin, 1987.1


\bibitem{opa} BOREL, A; HIRZEBRUCH, F. Characteristic classes and homogeneous spaces. I. Amer. J. Math., 80,
1958. 7.

\bibitem{lock} DABKOWSKI, M.; LOCK, M. An equivalence of scalar curvatures on Hermitian manifolds. J. Geom. Anal., 27(1):239–270, 2017. 1, 2


\bibitem{fu} FU, J.; ZHOU, X. Scalar curvatures in almost Hermitian geometry and some applications. Science China Mathematics, p. 1-18, 2022.

\bibitem{Fusi} FUSI, E. The Prescribed Chern Scalar Curvature Problem. J Geom Anal 32, 187 (2022). 

\bibitem{gau2} GAUDUCHON, P. La 1-forme de torsion d’une variété hermitienne compacte. Math. Ann.,
267(4):495–518, 1984. 7

\bibitem{gau} GAUDUCHON, P. Hermitian connections and Dirac operators. Bollettino della Unione Matematica Italiana-B, n. 2, p. 257-288, 1997.


\bibitem{nosso2} GRAMA, L.; OLIVEIRA, A. Scalar Curvatures of invariant almost Hermitian structures on generalized flag manifolds. SIGMA. Symmetry, Integrability and Geometry: Methods and Applications, v. 17, p. 109, 2021.

%\bibitem{global} GRAMA, L. et al. Global dynamics of the Ricci flow on flag manifolds with three isotropy summands. arXiv preprint arXiv:2004.01511, p. 2, 2020.

\bibitem{sub} GRAY, A.; HERVELLA, L. M. The sixteen classes of almost Hermitian manifolds and their linear invariants. Annali di Matematica pura ed applicata, v. 123, n. 1, p. 35-58, 1980.

\bibitem{Kaz-War} KAZDAN,J; WARNER, F, Scalar curvature and conformal deformation of Riemannian structure. J. Differential Geom. 10(1): 113--134 (1975).  

\bibitem{kimura} KIMURA, M. Homogeneous Einstein metrics on certain Kähler C-spaces. In: Recent topics in differential and analytic geometry. Academic Press, 1990. p. 303-320.

%\bibitem{princ} KOTSCHICK, D.; TERZIĆ, S. Chern numbers and the geometry of partial flag manifolds. Commentarii Mathematici Helvetici, v. 84, n. 3, p. 587-616, 2009.

\bibitem{liu} LIU, K.; YANG, X. Ricci curvatures on Hermitian manifolds. Transactions of the American Mathematical Society, v. 369, n. 7, p. 5157-5196, 2017.


\bibitem{integra} MEHDI L.; UPMEIER, M. Integrability theorems and conformally constant Chern scalar
curvature metrics in almost Hermitian geometry. Comm. Anal. Geom., 28(7):1603–1645, 2020. 2

\bibitem{moroianu} MOROIANU, A. Lectures on K\"ahler Geometry, Cambridge University Press (2007).

\bibitem{sn} SAN MARTIN, L.; NEGREIROS, C. Invariant almost {H}ermitian structures on flag manifolds. Adv. Math. 178, 2003, 277--310.
  
\bibitem{rita} SAN MARTIN, L.; de JESUS SILVA, R. Invariant nearly-{K}\"{a}hler structures. Geom. Dedicata 121, 2006, 143--154.
 
\bibitem{TZ} TOSATTI,V., WEINKOVE,B. The Chern–Ricci flow. Atti Accad. Naz. Lincei Cl. Sci. Fis. Mat. Natur. 33 (2022), no. 1, pp. 73--107.




\end{thebibliography}
\end{document}